\let\oldtocsection=\tocsection
\let\oldtocsubsection=\tocsubsection
\let\oldtocsubsubsection=\tocsubsubsection
\renewcommand{\tocsection}[2]{\bfseries\hspace{0em}\oldtocsection{#1}{#2}}
\renewcommand{\tocsubsection}[2]{\hspace{1em}\oldtocsubsection{#1}{#2}}
\renewcommand{\tocsubsubsection}[2]{\itshape\hspace{2em}\oldtocsubsubsection{#1}{#2}}
\newtheorem{thm}{Theorem}[section]
\newtheorem{conj}{Conjecture}[section]
\newtheorem{prop}[thm]{Proposition}
\newtheorem{lemma}[thm]{Lemma}
\newtheorem{cor}[thm]{Corollary}
\theoremstyle{definition}
\newtheorem{defn}[thm]{Definition}
\newtheorem*{definition*}         {Definition}
\newtheorem{eg}[thm]{Example}
\theoremstyle{remark}
\newtheorem{claim}[thm]{Claim}
\newtheorem{remark}[thm]{Remark}
\newcommand{\End}{\mathrm{End}}
\newcommand{\Hom}{\mathrm{Hom}}
\newcommand{\bP}{\mathbb{P}}
\newcommand{\bE}{\mathbb{E}}
\newcommand{\Z}{\mathbb{Z}}
\newcommand{\C}{\mathbb{C}}
\newcommand{\bG}{\mathbb{G}}
\newcommand{\cO}{\mathcal{O}}
\newcommand{\cD}{\mathcal{D}}
\newcommand{\cE}{\mathcal{E}}
\newcommand{\cF}{\mathcal{F}}
\newcommand{\cG}{\mathcal{G}}
\newcommand{\cL}{\mathcal{L}}
\newcommand{\cS}{\mathcal{S}}
\newcommand{\cU}{\mathcal{U}}
\newcommand{\cV}{\mathcal{V}}
\newcommand{\cW}{\mathcal{W}}
\newcommand{\bfG}{\mathbf{G}}
\newcommand{\bfH}{\mathbf{H}}
\newcommand{\bfU}{\mathbf{U}}
\newcommand{\bfGL}{\mathbf{GL}}
\renewcommand{\phi}{\varphi}
\def\ra{\rightarrow}
\def\codim{\operatorname{codim}}
\def\img{\operatorname{img}}
\def\id{\operatorname{id}}
\def\GL{\mathbf{GL}}
\def\SL{\mathbf{SL}}
\def\Hom{\operatorname{Hom}}
\def\Stab{\operatorname{Stab}}
\def\End{\operatorname{End}}
\def\Pairs{\mathrm{Pairs}}
\def\eff{\mathrm{eff}}
\def\mot{\mathrm{mot}}
\def\trdeg{\operatorname{trdeg}}
\def\NoriMot{\mathcal{MM}_{\mathrm{Nori}}}
\def\Spec{\operatorname{Spec}}
\def\gr{\operatorname{gr}}
\def\Zar{\mathrm{Zar}}
\def\colim{\operatorname{colim}}
\def\Hilb{\operatorname{Hilb}}
\newcommand{\alg}{\mathrm{alg}}
\newcommand{\an}{\mathrm{an}}
\renewcommand{\bar}[1]{\overline{#1}}
\title[Functional Transcendence of Periods]{Functional Transcendence of Periods and the Geometric Andr\'e--Grothendieck Period Conjecture}
 \author[B. Bakker]{Benjamin Bakker}
\address{\noindent B. Bakker:  Dept. of Mathematics, Statistics, and Computer Science, University of Illinois at Chicago, Chicago, USA.}
\email{bakker.uic@gmail.com}
\author[J. Tsimerman]{Jacob Tsimerman}
\address{\noindent J. Tsimerman:  Dept. of Mathematics, University of Toronto, Toronto, Canada.}
\email{jacobt@math.toronto.edu}
\def\G{\mathbf{G}}
\def\H{\mathbf{H}}
\def\O{\mathcal{O}}
\def\VV{\mathbb{V}}
\def\D{\mathcal{D}}
\def\V{\mathcal{V}}
\def\R{\mathbb{R}}
\def\Q{\mathbb{Q}}
\def\an{\mathrm{an}}
\def\dual{\vee}
\begin{document}
\maketitle
\begin{abstract}
    We prove a functional transcendence theorem for the integrals of algebraic forms in families of algebraic varieties. This allows us to prove a geometric version of Andr\'e's generalization of the Grothendieck period conjecture, which we state using the formalism of Nori motives.  
    
    More precisely, we prove a version of the Ax--Schanuel conjecture for the comparison between the flat and algebraic coordinates of an arbitrary admissible graded polarizable variation of integral mixed Hodge structures.  This can be seen as a generalization of the recent Ax--Schanuel theorems of \cite{chiu,GaoKlingler} for mixed period maps. 
\end{abstract}

\tableofcontents
\newpage
\section{Introduction}

\subsection{Transcendence of periods}

\subsubsection{Periods}

Given a smooth algebraic variety $X$ defined over a field $k\subset \C$, we may take an algebraic differential $p$-form $\omega$ defined over $k$ (or more generally a degree $p$ algebraic de Rham cohomology class defined over $k$) and form the integral $\int_\gamma\omega$ along a topological $p$-cycle $\gamma$ of $X(\C)$ (with the euclidean topology).  Such numbers are called \emph{periods} of $H^p(X)$.  They collectively determine the Hodge structure on the degree $p$ cohomology of $X$, and conjecturally encode much of the geometry of $X$.  One instance of this is the Hodge conjecture, which says that Hodge classes are represented by algebraic subvarieties.  In a different direction, the period conjecture says that algebraic relations among the periods themselves should be of geometric origin.

\subsubsection{Classical period conjecture}We may measure the $\Q$-algebraic independence of the periods of $H^p(X)$ via the transcendence degree of the field extension of $\Q$ they generate.  The algebraic relations arising from geometry are measured by the motivic Galois group\footnote{We use the Tannakian category of Nori motives to define the motivic Galois group.  Ayoub uses Voevodsky's triangualated category of motives, but the resulting motivic Galois group is canonically the same \cite{isomgalois}.} $\bfG_\mot(H^p(X))$ of $H^p(X)$.  Roughly speaking, it records the $\Q$-algebraic relations arising from pushing forward or pulling back along geometric maps and applying Stokes' theorem.  The two are related by a conjecture of Grothendieck (see \cite{BoCh} for more history and discussion) which is phrased more generally for motives, but for the purposes of the above discussion we may take $M=H^p(X)$ for a variety $X$ defined over $\Q$:




\begin{conj}[Grothendieck period conjecture]\label{Grothperiod}
Let $M$ a Nori motive over $\Q$.  Then
\[\trdeg_\Q \Q(\mathrm{periods}\;\mathrm{of}\;M) = \dim\bfG_\mot(M).\]
\end{conj}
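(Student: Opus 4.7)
My plan is to attempt the two inequalities separately. The upper bound $\trdeg_\Q \Q(\text{periods of } M) \leq \dim \bfG_\mot(M)$ is essentially formal and I would dispose of it first: the Betti--de Rham comparison isomorphism $c: M_{\mathrm{dR}} \otimes \C \xrightarrow{\sim} M_B \otimes \C$ is a $\C$-point of the $\Q$-variety $\mathrm{Iso}^\otimes(\omega_{\mathrm{dR}}, \omega_B)$ of tensor isomorphisms of fiber functors on the Tannakian subcategory generated by $M$. This variety is naturally a right torsor under $\bfG_\mot(M)$, hence an affine $\Q$-variety of dimension $\dim\bfG_\mot(M)$, and in any $\Q$-affine coordinate system the periods are the coordinates of $c$. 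So they generate a subfield of $\C$ of transcendence degree at most $\dim\bfG_\mot(M)$.

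For the lower bound, my approach would be to reduce to a functional transcendence statement via deformation. Realizing $M$ as a direct summand of $H^p(X)$ for some smooth $X/\Q$, I would embed $X$ in an algebraic family $\pi: \mathscr{X}\to S$ with $S$ smooth over $\Q$ and a distinguished $\Q$-point $s_0$ satisfying $\mathscr{X}_{s_0}=X$, chosen so that the generic motivic Galois group of the associated variation of mixed Hodge structures is at least as large as $\bfG_\mot(M)$ (ideally taking $S$ to be a versal deformation or a suitable moduli space). An Ax--Schanuel theorem for the associated period map, precisely of the sort this paper develops, would then compute the transcendence degree of the periods of the generic fiber over the function field $\Q(S)$. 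The final step is to specialize this functional statement to the fiber over $s_0$.

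The main obstacle, and the reason Conjecture \ref{Grothperiod} is wide open, is exactly this specialization step. Functional transcendence controls only the generic member of a family; extracting an arithmetic transcendence statement at a single $\Q$-point requires ruling out ``exceptional'' specializations, which is essentially equivalent to the arithmetic statement one is trying to prove. No known technique---neither o-minimal methods \`a la Pila--Zannier, nor classical Diophantine approximation---seems strong enough in this setting, as evidenced by the fact that even the algebraic independence of $\pi$ and $e$ (a special case) remains unknown. My realistic expectation is therefore that this plan can only hope to recover the functional/geometric analog of Conjecture \ref{Grothperiod}---which is what this paper in fact proves---and that the arithmetic conjecture itself will require fundamentally new ideas from transcendence theory.
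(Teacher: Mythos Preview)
The statement you were asked to prove is labeled \emph{Conjecture} in the paper for good reason: the paper does not prove it, and it remains wide open. There is therefore no ``paper's own proof'' to compare against. Your proposal is not a proof but an honest assessment of why no proof is available, and that assessment is correct.

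Your upper bound argument is the standard one and is valid: the period point is a $\C$-point of the $\Q$-torsor $\mathrm{Iso}^\otimes(\omega_{\mathrm{dR}},\omega_B)|_{\langle M\rangle}$ under $\bfG_\mot(M)$, which is a $\Q$-scheme of dimension $\dim\bfG_\mot(M)$, and the coordinates of any $\C$-point of a $d$-dimensional $\Q$-variety generate a field of transcendence degree at most $d$ over $\Q$.

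Your diagnosis of the lower bound is also on target. The paper's methods (o-minimality, Ax--Schanuel for variations of mixed Hodge structures) are intrinsically functional: they control the Zariski closure of a leaf inside a flat torsor over a positive-dimensional base, and say nothing about a single fiber over a $\Q$-point. The specialization step you isolate is precisely the gap between the geometric period conjecture (which the paper does prove, as Theorem~\ref{Andreperiodintro} and its $K=k$ case due to Nori and Ayoub) and the arithmetic Conjecture~\ref{Grothperiod}. Your conclusion---that the plan recovers only the functional analog and that the arithmetic statement needs genuinely new transcendence input---matches the paper's own stance; see the discussion around Conjecture~\ref{Grothperiod} and \S\ref{secintrofunction}.
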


This conjecture is important partly because many interesting numbers arise as periods, such as $\log 2, \pi$.

\subsubsection{Andr\'e's generalization}

While Grothendieck's conjecture is very general (and currently very, very open) it does not cover many important situations. For example even $e$ itself is (conjecturally!) not a period. However, $e$ can still be described in the language of periods, as $\log e=1$ and $\log x$ is itself a period \emph{function}. 

Andr\'e \cite{bertolinappendix} found a very clever way to address this issue by considering periods of varieties not just over number fields, but of varieties over arbitrary subfields $K$ of $\C$. Of course, one must now be careful, since $K$ can be chosen so as to engineer `coincidences' between periods, such as $\int_1^{e^2}\frac{dx}x = 2$.
Therefore, Andr\'e insists one pay a price for making $K$ very large: the transcendence can come either from the periods over $K$, or from $K$ itself. In this way, one may consider the transcendence simultaneously of quantities like $\alpha$ and $\int_0^\alpha \omega$ by working over the field $\Q(\alpha)$. 

The general statement is as follows:

\begin{conj}[Andr\'e--Grothendieck period conjecture]
Let $M$ a Nori motive over $K$.  Then
\[\trdeg_\Q K(\mathrm{periods}\;\mathrm{of}\;M) \geq \dim\bfG_\mot(M).\]
\end{conj}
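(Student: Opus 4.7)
The plan is to follow Andr\'e's original strategy, which reduces the conjecture to two independent ingredients: a functional transcendence statement for period maps (supplied by the paper's Ax--Schanuel theorem for admissible graded polarizable mixed variations), and the classical Grothendieck period conjecture over $\overline{\Q}$. The latter is the main obstacle.

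First, reduce to the case that $K$ is finitely generated over $\Q$ by taking a finitely generated subfield over which $M$ is defined. Write $K = \overline{\Q}(B)$ for a smooth geometrically irreducible $\overline{\Q}$-variety $B$ of dimension $d = \trdeg_{\overline{\Q}} K$, and spread $M$ out to a family $\mathcal{M}$ of Nori motives over an open subset of $B$. The inclusion $K \hookrightarrow \C$ corresponds to a complex point $b \in B(\C)$, and the periods of $M$ are precisely the values at $b$ of the (multivalued) holomorphic period functions attached to $\mathcal{M}$.

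On the Tannakian side, the standard formalism for families of motives, developed in the Nori setting in this paper, should yield an extension
\[
1 \to \mathbf{H} \to \bfG_\mot(M) \to \bfG_\mot(M_{b_0}) \to 1
\]
for a sufficiently general $b_0 \in B(\overline{\Q})$, where $\mathbf{H}$ is the motivic monodromy group, so that $\dim \bfG_\mot(M) = \dim \mathbf{H} + \dim \bfG_\mot(M_{b_0})$. On the period side, transcendence degrees are additive:
\[
\trdeg_\Q K(\mathrm{periods}\;\mathrm{of}\;M) = d + \trdeg_K K(\mathrm{periods}\;\mathrm{of}\;M).
\]
The paper's Ax--Schanuel theorem, applied to the period map of $\mathcal{M}$, supplies the functional lower bound $\trdeg_K K(\mathrm{periods}\;\mathrm{of}\;M) \geq \dim \mathbf{H}$: modulo the base coordinates, the period functions must span the algebraic monodromy orbit inside the compact dual of the period domain. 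A careful specialization argument to a very general $\overline{\Q}$-point $b_0$ then combines this with the independent transcendence contribution $\trdeg_\Q \overline{\Q}(\mathrm{periods}\;\mathrm{of}\;M_{b_0})$ of the fibral periods, giving
\[
\trdeg_\Q K(\mathrm{periods}\;\mathrm{of}\;M) \geq \dim \mathbf{H} + \trdeg_\Q \overline{\Q}(\mathrm{periods}\;\mathrm{of}\;M_{b_0}).
\]

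To conclude the desired bound $\dim \mathbf{H} + \dim \bfG_\mot(M_{b_0})$, one must establish $\trdeg_\Q \overline{\Q}(\mathrm{periods}\;\mathrm{of}\;M_{b_0}) \geq \dim \bfG_\mot(M_{b_0})$, which is precisely the classical Grothendieck period conjecture for the $\overline{\Q}$-motive $M_{b_0}$. This arithmetic input is the fundamental obstacle: Grothendieck's conjecture is wide open in essentially all nontrivial cases, and no uniform strategy is known. This is why the paper proves only the \emph{geometric} version of Andr\'e--Grothendieck, in which the arithmetic piece is replaced by the tautological Tannakian statement for the family itself; the full inequality as stated remains conjectural, conditional on the resolution of the classical Grothendieck period conjecture.
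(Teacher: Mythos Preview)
The statement you are addressing is stated in the paper as a \emph{conjecture}, not a theorem; the paper offers no proof of it. What the paper actually proves is the \emph{geometric} analogue (Theorem~\ref{Andreperiodintro}), where $\Q\subset\C$ is replaced by a complex function field $k\subset k^\an_\gamma$ and $\bfG_\mot(M)$ is replaced by the relative group $\bfG_\mot(M/\C)$. Your final paragraph already recognizes this, so your ``proof proposal'' is really an explanation of why the statement remains open rather than a proof.

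Your reduction---split the problem into a functional transcendence piece handled by Ax--Schanuel and an arithmetic piece at a $\overline{\Q}$-point---is indeed Andr\'e's original heuristic, and you correctly isolate the obstruction: the inequality $\trdeg_\Q \overline{\Q}(\mathrm{periods\;of\;}M_{b_0})\geq \dim\bfG_\mot(M_{b_0})$ is exactly the classical Grothendieck period conjecture, which is open. A couple of technical points in your sketch deserve care if you ever pursue this conditionally: the short exact sequence you write mixes the motivic Galois group of $M$ over $K$ with that of a special fiber over $\overline{\Q}$, and making this precise (in particular identifying your $\mathbf{H}$ with something the Ax--Schanuel theorem controls) requires the kind of relative Tannakian formalism set up in \S\ref{secNori}; also, the ``careful specialization argument'' combining the functional and fibral contributions into a single transcendence-degree bound is not automatic and is where most of the work would lie. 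But none of this changes the bottom line: there is no proof in the paper to compare against, and your diagnosis of the missing arithmetic input is correct.
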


\subsubsection{Functional Setup}\label{secintrofunction}

To formulate a geometric analogue, we must find a replacement for the extension $\Q\subset \C$, as well as the integration map. Namely, we think of $\Q$ as the base-field over which we consider our varieties, and $\C$ as the extension field over which the period integrals are a-priori defined. As such, we replace $\Q$ with a complex function field $k$, and $\C$ with an appropriate field of meromorphic germs $k^{\an}$. Essentially, $k$ is the function field of a complex variety $S$ and varieties over $k$ are families $X\ra S$, so for an algebraic relative de Rham cohomology class $\omega\in H_{DR}^p(X/S)$ and a local flat section $\gamma_s$ of the $p$-homology of the fibers, our integration map considers the fiberwise periods $\int_{\gamma_s}\omega_s$ and yields meromorphic (as opposed to rational or algebraic) functions on the base $S$.  Moreover, for an intermediate field $k\subset K\subset k^\an$, if $K$ is the function field of a complex variety $T$, then the embedding $k\subset K$ yields a rational map $T\to S$ while the embedding $K\subset k^\an$ yields a local analytic section $\tau$ of $T\to S$ with Zariski dense image.  We therefore interpret the $k^\an$-valued periods of $H^p(X)$ for $X\to T$ as the pullback via $\tau$ of the periods over $T$.

Replacing $\Q\subset\C$ with the extension $k\subset k^{\an}$, we may formulate and prove the analogue of the Andr\'e--Grothendieck period conjecture:

\begin{thm}[see Theorem \ref{Andreperiod}]\label{Andreperiodintro}
Let $k$ be the function field of a complex algebraic variety and $k\subset K\subset k^\an$ where $K/k$ is finitely generated.  Then for any Nori motive $M$ over $K$ we have 
\[\trdeg_kK(\mathrm{periods}_{k^\an}\;\mathrm{of}\; M)\geq \dim \bfG_{\mot}(M/\C).\]

\end{thm}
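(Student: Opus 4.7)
The plan is to deduce this theorem from the paper's Ax--Schanuel theorem for admissible graded polarizable integral VMHS, together with a Tannakian identification of the resulting "bi-algebraic" group with the motivic Galois group $\bfG_\mot(M/\C)$.

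First I would spread out $M$ to a Nori motive over a complex algebraic variety $T$ with $\C(T) = K$, realised (after shrinking $T$) by an admissible graded polarizable integral VMHS $\mathbb{V}$. The embedding $K \subset k^\an$ corresponds, as explained in \S\ref{secintrofunction}, to a local analytic section $\tau: U \to T$ of the dominant rational map $T \dashrightarrow S$ near a base point $s_0 \in S$, with $\tau(U)$ Zariski dense in $T$. Let $\mathcal{P} \to T$ be the torsor of flat-to-de~Rham trivialisations of $\mathbb{V}$, so that regular functions on $\mathcal{P}$ are precisely polynomials in the period matrix entries. A local flat frame lifts $\tau$ to $\tilde\tau: U \to \mathcal{P}$, and pulling back the tautological period matrix to $U$ yields exactly the generators of $K(\mathrm{periods})$ over $k$. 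Setting $\tilde W \subseteq \mathcal{P}$ to be the Zariski closure of $\tilde\tau(U)$, one obtains
\[\trdeg_k K(\mathrm{periods}) = \dim \tilde W - \dim S.\]

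Next I would apply the paper's Ax--Schanuel theorem, which in this setup asserts that $\tilde W$ is the bi-algebraic hull of $\tilde\tau(U)$ in $\mathcal{P}$, i.e., the smallest $G_\mathbb{V}$-subtorsor of $\mathcal{P}$ containing $\tilde\tau(U)$, for a canonical $\Q$-algebraic subgroup $G_\mathbb{V} \subseteq \GL(\mathbb{V})$ determined by the variation, unless the analytic image is contained in a proper weakly special subvariety. Since $\tau(U)$ is Zariski dense in $T$ and every weakly special subvariety of $\mathcal{P}$ projects to a closed algebraic subvariety of $T$, the exceptional case is ruled out. Thus $\dim \tilde W = \dim T + \dim G_\mathbb{V}$, so
\[\trdeg_k K(\mathrm{periods}) = (\dim T - \dim S) + \dim G_\mathbb{V}.\]
The final step is the Tannakian identification of $G_\mathbb{V}$ with $\bfG_\mot(M/\C)$. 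Because the paper's Ax--Schanuel theorem concerns full period integrals rather than only period maps, $G_\mathbb{V}$ is naturally the Tannakian group of the subcategory of admissible graded polarizable VMHS tensor-generated by $\mathbb{V}$, equipped with its fibre functor at $\tau(s_0)$. By Ayoub's theorem on motives over complex function fields, this group matches the motivic Galois group $\bfG_\mot(M/\C)$ of the specialisation of $M$ at $\tau(s_0)$, which is generated by the Mumford--Tate group of a generic fibre together with the algebraic monodromy of $\mathbb{V}$. Combined with $\dim T \geq \dim S$, this gives the desired inequality.

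The main obstacle is this last Tannakian identification: Ax--Schanuel naturally produces a Hodge-theoretic group (the Mumford--Tate group enlarged by algebraic monodromy), while $\bfG_\mot(M/\C)$ is defined via the Tannakian formalism for Nori motives, and $a$ $priori$ the former could be strictly smaller than the latter. Bridging the gap requires (a variant of) Ayoub's theorem on motivic Galois groups over function fields, and one must take care that $\bfG_\mot(M/\C)$ is correctly interpreted as the motivic Galois group of the specialisation at $\tau(s_0)$ rather than as the relative motivic Galois group over $K$. By comparison, the remaining steps---spreading out, reading off the transcendence degree as a Zariski-closure dimension, and verifying that Zariski density precludes the exceptional case of Ax--Schanuel---are essentially formal.
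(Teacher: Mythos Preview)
Your overall architecture matches the paper's proof: spread out $M$ to a variation $\cV$ over a model $T$ of $K$, lift the analytic section $\tau$ to $\tilde\tau$ landing in the leaf $\Sigma_\cV$, interpret $\trdeg_k K(\mathrm{periods})$ as $\dim(\tilde\tau(U))^{\Zar}-\dim S$, invoke Theorem~\ref{main}, and identify the resulting group with $\bfG_\mot(M/\C)$ via the Nori--Ayoub exact sequence.  However, two of your steps are misstated in ways that matter.

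First, your reading of Theorem~\ref{main} is incorrect.  The theorem does \emph{not} assert that the Zariski closure $\tilde W$ is a $G_{\mathbb V}$-subtorsor, nor that $\dim\tilde W=\dim T+\dim G_{\mathbb V}$.  It gives only a codimension inequality: taking $W=\tilde W$ and $U$ the component of $\tilde W\cap\Sigma_\cV$ containing $\tilde\tau(U)$, the Zariski density of $\tau(U)$ in $T$ rules out the weak Mumford--Tate conclusion, so one obtains $\codim_{\tilde W}U\geq\dim\G$, i.e.\ $\dim\tilde W-\dim S\geq\dim\G$.  This is already the desired inequality; your detour through $\dim T-\dim S$ is both unjustified (the equality $\dim\tilde W=\dim T+\dim\G$ can fail) and unnecessary.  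This is exactly how the paper argues in Claim~\ref{AndreVar}.

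Second, your description of the group on the Hodge side is confused.  The $\G$ appearing in Theorem~\ref{main} is the \emph{algebraic monodromy group} of $\cV$---nothing more.  It is not ``the Mumford--Tate group enlarged by algebraic monodromy,'' and $\bfG_\mot(M/\C)$ is not ``the motivic Galois group of the specialisation at $\tau(s_0)$.''  The relevant identification is Corollary~\ref{pi1} (a consequence of the Nori--Ayoub exact sequence, Theorem~\ref{exactGalois}): the relative motivic Galois group $\bfG_\mot(M/\C)$ equals the Zariski closure of the monodromy of the Betti local system $\mathscr{H}_\gamma(M)$.  Since this has the same dimension as its identity component $\G$, the inequality $\dim\tilde W-\dim S\geq\dim\G=\dim\bfG_\mot(M/\C)$ is exactly what is needed.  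Your worry that the Hodge-theoretic group might be strictly smaller than $\bfG_\mot(M/\C)$ evaporates once you use the correct identification.
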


Here $\bfG_{\mot}(M/\C)$ denotes the relative motivic Galois group (see Definition \ref{relmot}).  The analog of the Grothendieck period conjecture---namely the case $K=k$---was proven by Nori (unpublished) and Ayoub \cite{ayoubkz}.  As shown by Nori and Ayoub, in the functional setting the motivic Galois group $\bfG_\mot(M/\C)$ has a natural interpretation as the Zariski closure of the topological monodromy group acting on the local system associated to the Betti realization of $M$  (see Theorem \ref{exactGalois}).\footnote{In fact, Nori and Ayoub have a slightly artificial setup from this perspective, where they work only with subfields of $\C$. We find complex function fields to be a more natural context, so in \S4 we show how to go from their setup to this one at the cost of taking a slightly more complicated fiber functor.  }

\begin{remark}
Nori and Ayoub in fact prove the functional analog of the Kontsevich--Zagier period conjecture which states that the formal period ring injects into $k^\an$ via evaluation.  This is equivalent to the $K=k$ case of Theorem \ref{Andreperiodintro} together with the irreducibility of the torsor of isomorphisms between the de Rham and Betti realization functors.  See section 4 for further discussion.  The full Kontsevich--Zagier conjecture does not generalize to the setting of Theorem \ref{Andreperiodintro} without further assumptions on $K$, as $K$ itself may contain some period functions.  See \cite{ayoubsummary} (and specifically Remark 15) for a nice summary.  
\end{remark}


\subsection{Ax--Schanuel conjecture}
\subsubsection{Motivation}  We will deduce Theorem \ref{Andreperiodintro} from a version of the Ax--Schanuel conjecture for the analytic comparison between the flat and algebraic coordinates of an admissible variation of mixed Hodge structures.  The relation to an Ax--Schanuel type theorem is not surprising as Andr\'e's conjecture implies the classical Schanuel conjecture---see Example \ref{egAS}. In fact, it will turn out that these two statements are formally equivalent, if one restricts to studying variations of mixed Hodge structures which come from geometry.

In the past decade there has been much progress in functional transcendence, beginning with interest in unlikely intersection problems and Shimura varieties, and specifically on the period maps for variations of (mixed) Hodge structures \cite{PTAS,MPT,BTAS,chiu,GaoKlingler}. This amounts to studying the transcendence of the coordinates of the Hodge filtration in an appropriate flag variety with respect to a flat trivialization, which are roughly speaking ratios of certain period functions\footnote{Indeed, for an elliptic curve the coordinate $\tau$ for the Hodge filtration is the ratio of the two periods.}.  The main advantage of Theorem \ref{main} below is that it directly applies to the period functions themselves.  See \S\ref{elliptic} for a concrete example.

\def\full{\mathrm{full}}

\subsubsection{Main result}Let $S$ be an algebraic variety and $\cV=(V_\Z,W_\bullet V,F^\bullet V)$ an admissible variation of graded-polarizable integral mixed Hodge structures on $S$ (see for example \cite{PSmix} for background), where $V_\Z$ is an integral local system on $S^\an$, $W_\bullet V$ a (descending) filtration of $V_\Q:=V_\Z\otimes_{\Z_{S^\an}}\Q_{S^\an}$, and $F^\bullet V$ an (ascending) filtration of  $V_{\O^\an}:=V_\Z\otimes_{\Z_{S^\an}}\O_{S^\an}$.  Choosing a basepoint $s_0\in S$, we introduce the following notation:
\begin{itemize}
\item $V_{\Z,0}$ is the fiber of $V_\Z$ at $s_0$, and likewise for $V_{\Q,0},V_{\C,0}$,
    \item $(V_{\O},\nabla)$ is the canonical algebraic structure \cite{deligneext} on the flat vector bundle $(V_{\O^\an},\nabla)$, 
    \item $\VV:=\mathbb{A}(V_\O)$ its geometric total space with projection $\pi:\VV\to S$,
        \item $\G_\full\subset \bfGL(V_{\Q,0})$ is the full algebraic monodromy group, namely the $\Q$-Zariski closure of the image $\Gamma$ of $\pi_1(S^\an,s_0)\to \End(V_{\Q,0})$.
    \item $\G\subset \bfGL(V_{\Q,0})$ is the algebraic monodromy group, namely the identity component of $\G_\full$.
\end{itemize}
An irreducible subvariety $Z\subset S$ is contained in a proper \emph{weak Mumford-Tate subvariety} if and only if the algebraic monodromy group of the restriction $\cV_Z$ is smaller than $\G$, see \S\ref{corMT}.

\def\Isom{\mathbb{I}}
\def\calHom{\mathcal{H}\hspace{-.2em}\operatorname{om}}

Now, let $\cV_0$ be the trivial variation whose fiber is the fiber of $\cV$ over $s_0$.  Consider the variation $\cE:=\calHom(\cV,\cV_0)$, its underlying algebraic flat vector bundle $E_\cO$ with total space $\bE$, and let $\Isom\subset \bE$ be the open set of isomorphisms of the fibers (as vector spaces) in the geometric total space, which is naturally a $\bfGL(V_{\C,0})$-torsor over $S$ by post-composition.  We let $\widetilde{S^{\an}}$ be the minimal covering space of $S^{\an}$ which trivializes the local system $V_\Z$. Then solving the connection naturally gives a flat section $\widetilde{\sigma}_\cV:\widetilde{S^\an}\ra \widetilde{\Isom^\an}$ by sending a path to its flat transport operator. Projecting down we get a natural injective $S^\an$-map $\sigma_\cV:\widetilde{S^\an}\ra\Isom^\an$, whose image we denote by $\Sigma_\cV$.  We may also think of $\Sigma_\cV$ as the flat leaf of $\bE$ through the identity $\id:V_{\C,0}\to V_{\C,0}$ thought of as a point in the fiber above $s_0$.

Note that we may write the coordinates of this map as follows:  if we pick a basis $e_i$ for $V_{\Z,0}$ and a global meromorphic basis $\omega_j$ for $V_\cO$, then the coordinates for $\sigma_\cV$ are precisely the expansion of the $\omega_j$ in the flat continuation of the basis $e_i$. In the case that $V_\Z=R^nf_*\Z_X$, where $f:X\ra S$ is a smooth projective morphism, then the $\omega_j$ are relative de Rham cohomology classes and the coordinates of $\sigma_\cV$ are the period integrals of the $\omega_j$ over the dual homology basis to the $e_i$ along the fibers.

We shall show (see Lemma \ref{ZZar}) that that the Zariski closure $\Omega_\cV:=(\Sigma_\cV)^\Zar$ of $\Sigma_\cV$ is the $\G_\full(\C)$-orbit of $\Sigma_\cV$, and is therefore naturally a $\G_\full(\C)$-torsor which we call the \emph{period torsor}.  It has a natural flat connection restricted from $\mathbb{I}$ (see \S\ref{secttor}).  Moreover we have $\dim\Omega_\cV-\dim S=\dim\G $, which is the analog of Conjecture \ref{Grothperiod}.   

Our main theorem is:
\begin{thm}\label{main}
Suppose $W\subset \Omega_\cV$ is an algebraic subvariety and $U$ a component of $W\cap \Sigma_\cV$ such that 
\[\codim_{W} U<\dim\G.\label{ineqBDR}\]
Then the projection of $U$ to $S$ is contained in a weak Mumford--Tate subvariety.
\end{thm}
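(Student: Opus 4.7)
The plan is to adapt the framework of recent Ax--Schanuel theorems for (mixed) period maps \cite{BTAS, chiu, GaoKlingler} to the period torsor $\Omega_\cV$, taking advantage of the free $\G_\full(\C)$-action on this torsor. The core mechanism is o-minimal definability combined with monodromy, used to extract algebraic-group structure from an anomalously large analytic intersection.

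\textbf{Reduction.} Let $Z := \overline{\pi(U)}^{\Zar} \subset S$. Since a weak Mumford--Tate subvariety is characterized by reduction of algebraic monodromy, the conclusion is equivalent to $\G_Z \subsetneq \G$. Arguing by contradiction, suppose $\G_Z = \G$. Using Lemma \ref{ZZar}, $\Omega_{\cV_Z}$ is a $\G_\full$-torsor over $Z$ of dimension $\dim Z + \dim \G$ and may be identified with an irreducible component of $\Omega_\cV|_Z$. Replace $S$ by $Z$, $\cV$ by $\cV_Z$, and $W$ by an irreducible component of $W \cap \Omega_{\cV_Z}$ containing $U$; a dimension count shows that the hypothesis $\codim_W U < \dim \G$ persists, and now $\pi(U)$ is Zariski dense in $S$.

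\textbf{Main argument.} Admissibility of $\cV$ (via the $SL_2$-orbit theorem of Kashiwara--Cattani--Kaplan) implies that, after choosing a fundamental domain $\cF \subset \widetilde{S^{\an}}$ for the $\Gamma := \pi_1(S,s_0)$-action, the section $\sigma_\cV|_\cF$ is definable in $\mathbb{R}_{\an,\exp}$. Consider the set
\[\Theta := \bigl\{ g \in \G_\full(\C) : g \cdot W \text{ meets } \Sigma_\cV \text{ in a component near } gU \text{ of dimension at least } \dim U \bigr\}.\]
Monodromy invariance of $\Sigma_\cV$ shows $\Theta \supset \Gamma$. Applying Peterzil--Starchenko definable Chow to the incidence variety
\[\bigl\{(w,g) \in W \times \G_\full(\C) : g w \in \Sigma_\cV \bigr\}\]
shows $\overline{\Theta}^\Zar$ is algebraic; the codimension hypothesis $\codim_W U < \dim \G$ together with a dimension count exploiting $\dim \Sigma_\cV = \dim S$ and the freeness of the $\G_\full$-action forces $\overline{\Theta}^\Zar$ to be positive-dimensional. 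A standard group-theoretic argument in the torsor setting then produces a positive-dimensional algebraic subgroup $H \subseteq \G$ with $H \cdot W = W$. Since $\Gamma$ normalizes $H$ and is Zariski dense in $\G_\full$, the subgroup $H$ is normal. By a version of Deligne's theorem of the fixed part for admissible graded-polarizable mixed VHS, a normal subgroup of $\G_\full$ fixing a leaf component forces a proper sub-variation, contradicting $\G_Z = \G$ together with Zariski density of $\pi(U)$.

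\textbf{Main obstacle.} The heart of the argument is the dimension count producing the positive-dimensional subgroup $H$ from $\codim_W U < \dim \G$; this is the essential Ax--Schanuel input, requiring a careful interplay between the algebraic geometry of $W \subset \Omega_\cV$, the analytic structure of the leaf $\Sigma_\cV$, and the group structure of $\G_\full$. The mixed Hodge setting adds further subtlety via the unipotent radical of $\G_\full$, where admissibility is essential both for o-minimal definability at the boundary and for the relevant fixed-part result; one expects that the torsor setup---being a principal bundle rather than a flag bundle---makes the extraction of $H$ somewhat more transparent than in the flag-variety approach of \cite{chiu, GaoKlingler}, at the cost of needing to track the full unipotent structure rather than just its quotient acting on the Hodge filtration.
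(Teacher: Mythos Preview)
Your outline is in the spirit of the paper's second (o-minimal) proof, but it has two genuine gaps that prevent it from going through.

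First, the mechanism you invoke to extract group structure is the wrong one. Since $\Gamma\subset\Theta$ and $\Gamma$ is Zariski dense in $\G_\full$, the Zariski closure of $\Theta$ is automatically all of $\G_\full$; your ``dimension count'' is vacuous and Peterzil--Starchenko plays no role here. What actually drives the argument is the Pila--Wilkie counting theorem applied to a \emph{definable} (not algebraic) set such as $I=\{g\in\G(\R):\dim(g^{-1}W^\an\cap\cF_\cV)=\dim U\}$, where $\cF_\cV$ is a definable fundamental domain for $\Sigma_\cV$. One must first show that $I$ contains polynomially many integer points (this is the content of Chiu's dichotomy, Proposition~\ref{chiuprop}, and is where the mixed Hodge admissibility is genuinely used), and then Pila--Wilkie produces a semi-algebraic curve $C\subset I$ not lying in a single $\H(\R)$-coset. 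Definable Chow is used only at a separate step, to show that certain images in $S$ are algebraic.

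Second, even granting a positive-dimensional $H$ stabilizing $W$, your conclusion does not follow. You assert that $\Gamma$ normalizes $H$, but $\Gamma$ does not stabilize $W$; normality requires the Hilbert-scheme argument (the stabilizer of a very general point in the $\Gamma_0$-orbit closure of $[W]$ is $\Gamma_0$-normalized). More seriously, finding a \emph{proper} normal $H$ with $H\cdot W=W$ is not a contradiction, and ``the fixed part theorem'' does not convert it into one. The paper instead proves $\H=\G$ via the dichotomy above together with an induction on $(\dim S,\dim W-\dim U,-\dim U)$: the curve $C$ either does not stabilize $W$ (replace $W$ by $W\cap c^{-1}W$, lexicographically smaller) or moves $U$ (replace $W$ by $\overline{C}^{\Zar}\cdot W$, again smaller). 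Once $\H=\G$, the contradiction is immediate from the torsor structure: $W$ is a union of $\G(\C)$-orbits, each of dimension $\dim\G$, while $\Sigma_\cV$ meets each fiber of $\Omega_\cV\to S$ in a discrete set, so $\codim_W U\geq\dim\G$. Your outline is missing both the induction and the correct endgame.
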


Given the setup of the discussion in \S\ref{secintrofunction}, this theorem immediately implies Theorem \ref{Andreperiodintro} by taking $W$ to be the Zariski closure of the image of the composition $\sigma_\cV\circ \tau$.  In fact, Theorem \ref{Andreperiodintro} implies Theorem \ref{main}, at least for variations coming from geometry (see Remark \ref{rmkequal}).  We give two proofs of Theorem \ref{main}, first as an application of the Ax--Schanuel theorem for principal bundles of \cite{diffas} and second using o-minimality, generalizing and using the results of \cite{PTAS,MPT,BTAS,chiu,GaoKlingler}.  The above theorem easily recovers previous Hodge-theoretic Ax--Schanuel theorems (see \S\ref{secotherAS}).

\subsection{Outline}
In \S\ref{secbackground} we collect some background needed for the proof of Theorem \ref{main}, and in \S\ref{secproof} we give both proofs of our main result. 

In \S\ref{secNori} we give a straightforward generalization of Nori's construction of motives over subfields $k\subset\C$ to complex function fields and establish the necessary ingredients to deduce Theorem \ref{Andreperiodintro} from Theorem \ref{main}. Since there seems to be a gap in the literature for Nori motives over function fields, we take this opportunity to also write down Nori's proof of the Grothendieck period conjecture in this setting. We do this by gathering theorems already present in the literature, mostly from \cite{huberms}.
We also point out that one can quickly deduce the full Kontsevich-Zagier conjecture (a theorem of Ayoub \cite{ayoubkz}) from the Grothendieck period conjecture, combined with our analytic description of the period torsor.

In \S\ref{elliptic} we discuss as an example an application to families of elliptic curves, and show how to use Theorem \ref{main} to formulate some related statements. We also prove the Ax--Lindemann conjecture for abelian differentials (recently conjectured by Klingler--Lerer \cite{klinglerlerer}).  Finally, we explain how our main theorems implies all previously known Hodge--theoretic Ax--Schanuel theorem. 

\subsection{Acknowledgements}

The authors wish to thank Pietro Corvaja and Umberto Zannier for asking the question that motivated this work. They are also greatly indebted to Jonathan Pila for suggesting the application to the Grothendieck period conjecture.  B.B. was partially
supported by NSF grant DMS-1848049.

\section{Background Results}\label{secbackground}
In this section, we briefly recall the statements from o-minimal geometry and Hodge theory that we will need.  We also prove some preliminary results that will be used in the proof of Theorem \ref{main}.

\subsection{o-minimality}\label{funset}

We shall be working throughout in the o-minimal structure $\R_{\an,\exp}$, see \cite{Dries} for background. We shall use the following definable Chow theorem of Peterzil--Starchenko:

\begin{thm}[Peterzil--Starchenko {\cite[Thm 4.5]{definechow}}]\label{definechow}
Let $Y$ be a quasiprojective algebraic variety, and let $A\subset Y$ be definable, complex analytic, and closed in $Y$. Then $A$ is algebraic.
\end{thm}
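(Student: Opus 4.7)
The plan is to reduce to the classical Chow theorem by passing to a projective completion and showing that the closure of $A$ remains complex analytic across the newly added boundary. First I would choose an open embedding $Y \hookrightarrow \bar Y$ into a projective variety, composed with a closed embedding $\bar Y \hookrightarrow \bP^N$, and let $\bar A$ denote the closure of $A$ in $\bP^N$. Since $A$ is closed in $Y$, the frontier $\bar A \setminus A$ lies in the algebraic subvariety $D := \bP^N \setminus Y$, which is a proper closed subset of $\bP^N$. Because $A$ is definable in $\R_{\an,\exp}$, its closure $\bar A$ is also definable, and away from $D$ it is complex analytic.

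The crux is to show that $\bar A$ extends as a closed complex analytic subset across $D$. This is a Remmert--Stein/Bishop-type extension problem: a closed complex analytic subset of $\bP^N \setminus D$ of pure complex dimension $k$ extends to a closed complex analytic subset of $\bP^N$ provided it has locally finite $2k$-dimensional Hausdorff measure near $D$. The role of o-minimality is precisely to supply this finiteness. By cell decomposition in $\R_{\an,\exp}$, a definable set stratifies into finitely many cells of uniformly bounded combinatorial complexity. Combined with Wirtinger's formula --- which equates the $2k$-volume of a complex analytic subset with the integral of the $k$-th power of the Fubini--Study form --- this yields the required local volume bound near every boundary point.

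With $\bar A$ established as a closed complex analytic subvariety of $\bP^N$, the classical theorem of Chow implies $\bar A$ is algebraic, and then $A = \bar A \cap Y$ is algebraic in $Y$, as desired. One could also reduce further to the pure-dimensional, even irreducible, case by applying o-minimal finiteness to the irreducible analytic decomposition of $A$.

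The main obstacle is the extension step. In $\R_{\an,\exp}$ the presence of the exponential function permits neighborhoods of $D$ that wind complicatedly, and one must verify that definability really gives locally finite volume uniformly near $D$, not merely pointwise. The resolution runs through the o-minimal dimension theory and cell decomposition to produce a definable, locally finite stratification of $\bar A$ near each boundary point, allowing volume to be bounded stratum by stratum; this finiteness is essentially the technical heart of Peterzil--Starchenko's argument and is what distinguishes the o-minimal setting from the general complex analytic one, where closure across a proper subvariety need not remain analytic (e.g.\ the graph of $\exp$ in $\bC^2$).
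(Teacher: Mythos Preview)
The paper does not prove this theorem; it is quoted without proof as a black-box result of Peterzil--Starchenko. So there is no ``paper's proof'' to compare against.

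Your outline is nonetheless a legitimate route to the result, and it is worth noting how it differs from the original. Peterzil--Starchenko do not reduce to classical Chow via Bishop-type volume extension; they instead build up an o-minimal complex analysis (o-minimal Weierstrass preparation, removable singularities, proper mapping) and argue more intrinsically. Your approach is shorter if one is willing to import Bishop's theorem and the standard fact that a bounded definable set of o-minimal dimension $2k$ has finite $2k$-dimensional Hausdorff measure; theirs is more self-contained within the o-minimal category and yields finer structural information along the way.

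Two points in your sketch deserve tightening. First, Bishop's theorem needs pure dimension, and your proposed reduction via ``the irreducible analytic decomposition of $A$'' is delicate: the analytic irreducible components of a definable analytic set are not a priori individually definable (there can be infinitely many, and definability does not obviously pass to each one). The clean fix is to stratify by local complex dimension instead --- the locus of points of $A$ of local dimension $\geq k$ is definable --- and run the Bishop/Chow argument top-down by induction on $k$. Second, the Wirtinger identity is not really needed: finiteness of $\mathcal{H}^{2k}$ on bounded definable sets is a direct consequence of cell decomposition and does not require any complex structure, so you can drop that detour.
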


For an algebraic variety $S$ with a local system $V_\C$ (on $S^\an$), the total space $\mathbb{V}$ has a natural definable structure coming from its canonical algebraic structure.  On the other hand, another definable structure on $\mathbb{V}^\an$ is obtained by taking a definable cover of $S^\an$ by simply connected open sets and using flat coordinates.  In our case the two are the same by the following:
\begin{thm}[Bakker--Mullane {\cite[Theorem 1.2]{BMull}}]\label{alg=flat}
Let $V_\C$ be a local system underlying an admissible variation of graded-polarizable integral mixed Hodge structures on $S$.  Then the flat and algebraic definable structures on the total space $\mathbb{V}^\an$ are equivalent.
\end{thm}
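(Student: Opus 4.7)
The plan is to reduce the statement to a local analysis near the boundary of a good compactification of $S$ and then apply Deligne's canonical extension together with the explicit form of flat sections in that extension. The heart of the argument is that, relative to a frame for Deligne's canonical extension, the flat sections are described by matrix exponentials whose entries are polynomials in $\log t_j$ with nilpotent-matrix coefficients, and such expressions are definable in $\R_{\an,\exp}$.

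First I would fix a smooth compactification $\bar S$ of $S$ such that $D := \bar S \setminus S$ is a simple normal crossings divisor. Since definability is a local property on $\bar S^\an$, we may work in a polydisk chart $\Delta^n \subset \bar S$ with $D = \{t_1 \cdots t_k = 0\}$. By graded-polarizability (and admissibility), the local monodromy operators $T_i$ around each component $\{t_i = 0\}$ are quasi-unipotent; after a finite base change we may assume they are unipotent, with nilpotent logarithms $N_i$. Then Deligne's canonical extension $V_\O$ across $D$ is characterized as the unique extension on which $\nabla$ has a logarithmic pole with nilpotent residue, and if $e_1,\dots,e_r$ is a multivalued flat frame on the punctured polydisk, then the single-valued sections
\[
\tilde e_i = \exp\Bigl(-\tfrac{1}{2\pi i}\sum_{j=1}^{k}\log(t_j)\,N_j\Bigr)\cdot e_i
\]
extend to a holomorphic frame for $V_\O$ on the full polydisk. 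The transition matrix between the flat and algebraic frames is thus a polynomial in the $\log t_j$ with constant nilpotent-matrix coefficients, and so is definable in $\R_{\an,\exp}$ on a fundamental sector of $(\Delta^*)^k\times\Delta^{n-k}$.

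Next I would globalize. One covers $S^\an$ by finitely many definable open sets: boundary charts as above (with their fundamental sectors) together with simply connected definable opens in the interior. On each chart the flat trivialization of $\VV$ is definable with respect to the algebraic structure, by the previous paragraph, and transitions between overlapping charts are compositions of such definable maps; hence the flat definable atlas on $\VV^\an$ refines into the algebraic one, and conversely the algebraic charts are definable in the flat structure because the inverse of the matrix $\exp(\tfrac{1}{2\pi i}\sum \log(t_j)N_j)$ is again definable in $\R_{\an,\exp}$.

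The main obstacle is making the above analysis work in the full mixed setting rather than just for pure variations: in the pure case Schmid's nilpotent orbit theorem directly supplies the required asymptotic description, while in the mixed case one must invoke the admissibility hypothesis and the Kashiwara--Steenbrink--Saito--Kashiwara analysis of relative monodromy weight filtrations to ensure that quasi-unipotent local monodromy and a well-behaved Deligne extension exist regardless of how the weight and Hodge filtrations degenerate. Once admissibility is in hand, however, the underlying flat bundle analysis used above is insensitive to the mixed structure, and the definability claim follows.
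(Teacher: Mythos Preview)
The paper does not give its own proof of this statement: it is quoted as \cite[Theorem 1.2]{BMull} and used as a black box. Your proposal is essentially the argument in the cited reference, and the key mechanism you identify---that relative to a frame for Deligne's canonical extension the change-of-basis to a flat frame is $\exp\bigl(-\tfrac{1}{2\pi i}\sum_j \log(t_j)N_j\bigr)$, which is a \emph{polynomial} in the $\log t_j$ because the $N_j$ are nilpotent and hence definable in $\R_{\an,\exp}$---is exactly right. One small point worth making explicit: you should say why the algebraic structure the paper refers to (Deligne's canonical algebraic structure on a regular flat bundle, cf.\ \cite{deligneext}) agrees locally with the frame coming from the canonical extension across the normal crossings boundary; this is standard, but it is the link that makes your local computation relevant to the algebraic definable structure in the statement.
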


We shall use the following precise corollary to provide a definable fundamental domain for $\Sigma_\cV$.

\begin{cor}\label{definableperiods}For $V_\C$ as in the theorem, let $f:B\ra S^\an$ be a definable map from a definable analytic space and $\sigma$ a flat section of $f^*V_\C$.  Then the corresponding lift $\bar f: B\to \mathbb{V}^\an$ is definable.
\end{cor}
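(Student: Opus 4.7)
The plan is to reduce to checking definability in the flat definable structure on $\mathbb{V}^\an$ and then work locally in flat trivializations, where the flat section $\sigma$ becomes (locally) constant.

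First, by Theorem \ref{alg=flat} the flat and algebraic definable structures on $\mathbb{V}^\an$ coincide, so it suffices to prove that $\bar{f}:B \to \mathbb{V}^\an$ is definable with respect to the flat definable structure. To set this up, fix a finite definable open cover $\{U_i\}$ of $S^\an$ by relatively compact simply connected opens on which $V_\C$ admits flat trivializations $\phi_i : V_\C|_{U_i} \xrightarrow{\sim} U_i \times \C^n$; such a cover exists in $\R_{\an,\exp}$ (e.g.\ by passing to a semi-algebraic triangulation of $S^\an$ and thickening slightly) and by construction determines the flat definable structure on $\mathbb{V}^\an$, so that a map into $\mathbb{V}^\an|_{U_i}$ is definable iff its components in the trivialization $\phi_i$ are.

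Next, pulling back $f$ we get a finite definable cover $\{B_i := f^{-1}(U_i)\}$ of $B$, and on each $B_i$ the pullback $f^*V_\C|_{B_i}$ is flatly trivialized by $f^*\phi_i$. In this trivialization the flat section $\sigma|_{B_i}$ is locally constant, so since $B_i$ is definable in an o-minimal structure and hence has finitely many connected components, $\sigma|_{B_i}$ takes only finitely many values in $\C^n$ and is thus definable. Hence the restriction $\bar{f}|_{B_i}$, expressed through $\phi_i$, is the map $b \mapsto (f(b), \sigma(b))$, which is definable; the $\{B_i\}$ being a finite definable cover, $\bar{f}$ is definable on $B$.

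The only real content of the argument is Theorem \ref{alg=flat} together with the basic o-minimal fact that definable sets have finitely many connected components; no serious obstacle is expected, and the corollary is essentially immediate given the theorem.
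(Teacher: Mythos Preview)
Your argument is correct and is exactly the intended reading: the paper states the corollary without proof, treating it as immediate from Theorem~\ref{alg=flat}, and your proof simply spells out that implication---pass to the flat definable structure, trivialize over a finite definable cover by simply connected opens, and use that a flat section is locally constant together with finiteness of connected components. There is nothing to add.
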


We fix $\cF\subset \widetilde{S^{\an}}$ to be an open definable analytic subspace with simply connected components with a surjective map $\phi:\cF\ra S^\an$. Then by Proposition \ref{definableperiods} we have that $\Sigma_{\phi^*\cV}\subset \phi^*\Isom$ is definable, and therefore so is its image $\cF_\cV$ in $\Sigma_\cV$. Observe that $\cF_\cV$ surjects onto $S^\an$ and thus
$\Gamma\cdot\cF_\cV = \Sigma_\cV$.

\subsection{Point counting on transcendental sets}

Recall that the height of a rational point $\frac ab$ with $\gcd(a,b)=1$ is $H(\frac ab):=\max(|a|,|b|)$. For a point $q=(q_1,\dots,q_n)\in\Q^n$ we define
$H(q):=\max_i H(q_i)$. Finally, for any subset $X\subset\R^n$ we define $$N(X,T):=\#\{q\in X\cap \Q^n\mid H(q)\leq T\}.$$

Given a set $X\subset \R^n$, we set $X^{\alg}$ to be the union of all connected, positive dimensional, semi-algebraic subsets of $X$. Then we have the following theorem of Pila--Wilkie:

\begin{thm}[Pila--Wilkie {\cite[Thm 1.8]{PW}}]\label{pointcount}
For a set $X\subset\R^n$ definable in an o-minimal structure, and any $\epsilon>0$, we have $$N(X-X^{\alg},T) =  T^{o(1)}.$$
\end{thm}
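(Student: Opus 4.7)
The plan is to follow the classical Pila--Wilkie strategy: reduce via cell decomposition to smoothly parametrized pieces, use a determinant/interpolation argument to cover the rational points of height at most $T$ on each piece by few low-degree algebraic hypersurfaces, and induct on $\dim X$ to peel off the algebraic part $X^{\alg}$. The base case $\dim X=0$ is trivial since then $X$ is finite.

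First, by o-minimal cell decomposition it suffices to treat a single definable cell of pure dimension $d\le n$, parametrized by a definable map from $(0,1)^d$. The key input is the o-minimal \emph{$C^r$-reparametrization theorem}: for each integer $r\ge 1$, any bounded definable set admits a finite covering by images of $C^r$-smooth definable maps $\phi_i:(0,1)^d\to\R^n$ whose partial derivatives of order $\le r$ all have sup-norm bounded by $1$, and the number of charts is uniform in parameters when one works with a definable family. For a single such chart I would apply the Bombieri--Pila determinant estimate: if many rational points of height $\le T$ cluster in a subcube of side $T^{-\alpha}$, then Taylor-expanding $\phi$ to order $r$ and computing a Vandermonde-type interpolation determinant forces those points to satisfy a common nontrivial polynomial relation of bounded degree. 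Summing over a covering by subcubes gives a covering of the height-$\le T$ rational points in the chart by $O_{r,d}(T^{d(d+1)/(r+1)})$ hypersurfaces of bounded degree, and choosing $r$ large in terms of $\epsilon$ makes this count at most $T^\epsilon$.

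For the induction step, I intersect $X$ with each of the hypersurfaces $H$ produced above. Each component of $X\cap H$ is either a connected positive-dimensional semi-algebraic piece --- hence contained in $X^{\alg}$ by definition, contributing nothing to $N(X\setminus X^{\alg},T)$ --- or a definable set of strictly smaller dimension than $X$, to which the inductive hypothesis applies (with, say, $\epsilon/2$). Summing at most $T^\epsilon$ inductive contributions of size $T^{\epsilon/2}$ yields $N(X\setminus X^{\alg},T)=T^{o(1)}$. The main obstacle, and the technical heart of the proof, is the reparametrization theorem itself: a general definable set can be highly oscillatory near its cell boundaries, and producing smooth charts with all derivatives bounded by $1$ requires a delicate double induction on $(d,r)$ combining the o-minimal $C^r$-cell decomposition with a Yomdin--Gromov-style algebraic lemma adapted to arbitrary o-minimal structures. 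Keeping the chart count uniform in definable families --- essential for extracting the final $o(1)$ in the exponent --- is what makes this step genuinely hard.
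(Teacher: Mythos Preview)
The paper does not give a proof of this statement at all: Theorem~\ref{pointcount} is quoted as a result of Pila--Wilkie with a citation to \cite[Thm~1.8]{PW}, and the paper simply uses it as a black box. So there is nothing in the paper to compare your proposal against.

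That said, your sketch is a reasonable outline of the original Pila--Wilkie argument (reparametrization, determinant/interpolation bound, induction on dimension). One point to be careful about in the induction step: the dichotomy you state is not quite right. A component of $X\cap H$ that is positive-dimensional need not be \emph{semi-algebraic}---it is merely definable---so it is not automatically contained in $X^{\alg}$. The correct dichotomy is between $\dim(X\cap H)<\dim X$ (where induction applies) and the case where an open piece of $X$ lies inside $H$; in the latter case one must continue intersecting with further hypersurfaces (projecting to other coordinate $(d+1)$-tuples) until either the dimension drops or the surviving piece is cut out entirely by algebraic equations and is therefore genuinely semi-algebraic. This iterative refinement, together with making all constants uniform in definable families, is handled carefully in \cite{PW}; your sketch elides it.
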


In fact, we shall need the slightly stronger version:

\begin{thm}[Pila--Wilkie {\cite[Thm 3.6]{P}}]\label{pointcount2}
For a set $X\subset\R^n$ definable in an o-minimal structure, and any $\epsilon>0$, there is a definable family $W\subset X\times Y$ with semialgebraic fibers $W_y$, such that for any positive real number $T$, the rational points in $X$ of height at most $T$ are contained in the union of $T^{o(1)}$ of the fibers $W_y$.
\end{thm}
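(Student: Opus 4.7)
The plan is to adapt the proof of the original Pila--Wilkie theorem (Theorem \ref{pointcount}) so as to extract an explicit definable family $W$ rather than only a cardinality bound. The classical argument rests on two pillars: the o-minimal (Yomdin--Gromov type) reparametrization theorem, which covers any definable set $X$ by a controlled number of $C^r$ patches $f_i:(0,1)^{\dim X}\to X$ whose partial derivatives up to order $r$ are bounded by $1$; and a Bombieri--Pila style determinant method, which shows that any sufficiently large finite collection of rational points of height at most $T$ lying in the image of a single patch must be contained in a hypersurface of degree $d(r,\dim X)$, with the exceptional count going like $T^{o(1)}$ as $r\to\infty$.

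For the family version, the key point is that every ingredient can be made definable. The o-minimal reparametrization theorem already produces a definable family of $C^r$ patches; let the parameter space be $S_r$. Given a patch and a finite configuration of rational points of controlled size lying in its image, the hypersurface produced by the determinant method has coefficients that are algebraic functions of the coordinates of those points (arising as minors of a Vandermonde-like matrix indexed by a basis of monomials of degree at most $d(r,\dim X)$). Taking $Y$ to parametrize pairs consisting of an element of $S_r$ together with such a configuration, the resulting hypersurfaces cut out a definable family $W\subset X\times Y$ with semialgebraic fibers.

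To arrange that only $T^{o(1)}$ fibers are needed, fix $r=r(\epsilon)$ large enough so that $d(r,\dim X)$ yields the required exponent $\epsilon$, and iterate on dimension: after covering the bulk of rational points of height at most $T$ by our chosen fibers, any remaining rational points lie on the intersection of $X$ with the complement of these hypersurfaces, which is a definable set of strictly smaller dimension, and one recurses there. The finitely many hypersurface families accumulated through the induction are then amalgamated into a single definable family $W$.

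The main technical obstacle is uniformity. The family $W$ must be chosen once and for all, independently of $T$, while the choice of \emph{which} fibers to use is allowed to depend on $T$. One has to verify that the reparametrization step can be performed in a definable family with derivative bounds independent of the defining data of $X$, and that the accumulated data from the dimension induction fits inside a single definable family with no dependence on $T$ creeping into the indexing set $Y$. Granting these uniformities, the original Pila--Wilkie counting argument then runs through and produces the claimed $T^{o(1)}$ bound on the number of fibers needed.
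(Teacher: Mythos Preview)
The paper does not prove this statement: Theorem \ref{pointcount2} is quoted directly from the literature (Pila \cite[Thm 3.6]{P}) and used as a black box, with no argument supplied. So there is no ``paper's own proof'' to compare against.

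That said, your sketch is broadly the standard route to the blocks version of Pila--Wilkie, and the two pillars (o-minimal reparametrization plus the determinant method, run inductively on dimension) are correct. One inaccuracy worth flagging: in the inductive step you write that ``any remaining rational points lie on the intersection of $X$ with the \emph{complement} of these hypersurfaces.'' This is backwards. The determinant method shows that \emph{all} rational points of height $\leq T$ on a given patch are captured by the hypersurfaces it produces; the induction then proceeds on the intersections $X\cap H$ (which generically have strictly smaller dimension), not on their complement. The ``remaining'' set in your description would in fact be empty. With that correction the outline matches the standard proof.
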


\subsection{Mumford--Tate groups}\label{MTback}

We follow the conventions of \cite{Andre}.  For a rational mixed Hodge structure $V=(V_\Q,W_\bullet V,F^\bullet V)$, recall that the group of weight zero Hodge classes of $V$ is $W_0V\cap F^0V$.  We define the Mumford--Tate group of $V$ to be the subgroup of $\GL(V_\Q)$ stabilizing each weight zero Hodge tensor in all tensor powers $V^{\otimes m}\otimes(V^{\dual})^{\otimes n}$. 

\begin{thm}[Andr\'e]\label{MTfacts}
For an admissible variation of graded-polarizable integral mixed Hodge structures $(V_\Z,W_\bullet V,F^\bullet V)$ on a smooth algebraic variety $S$, we denote by $\G_s$ the the Mumford--Tate group of the fiber $(V_{\Q,s},W_\bullet V_{\Q,s},F^\bullet V_{\C,s})$ at $s$ and $\bfH_s$ the connected component of the Zariski closure of the image of $\pi_1(S^\an,s)$ in $\GL(V_{\Q,s})$. Then
\begin{enumerate}
    \item For a generic $s\in S$ we have $\H_s\subset \G_s$;
    \item $\G_s$ is locally constant outside of a meager set;
    \item For a generic $s\in S$ the monodromy group $\H_s$ is a normal subgroup of the derived subgroup of $\G_s$.
    
\end{enumerate}
\end{thm}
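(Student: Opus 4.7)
The strategy combines the description of $\G_s$ as the stabilizer of weight-zero Hodge tensors in tensor constructions on $V$ with the theorem of the fixed part for admissible VMHS and a Baire category argument on Hodge loci. I would prove the three statements in the order (2), (1), (3), since each builds on the previous.

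For (2), fix a basepoint $s_0$ and work on the universal cover $\widetilde{S^\an}$, so that flat transport is unambiguous. For each bidegree $(m,n)$ and each rational vector $v$ in the fiber of $V^{\otimes m}\otimes (V^\dual)^{\otimes n}$ at $s_0$, consider the \emph{Hodge locus}
\[
\mathrm{HL}(v)=\{s\in\widetilde{S^\an}: \tilde v_s\in W_0\cap F^0\}
\]
of the flat continuation $\tilde v$. Each $\mathrm{HL}(v)$ is a closed complex analytic subvariety, since $W_\bullet$ is locally constant and $F^\bullet$ is holomorphic. The set of points where the dimension of weight-zero Hodge tensors (in all bidegrees) jumps above its generic value is the countable union of those $\mathrm{HL}(v)$ for $v$ that are not generically Hodge, hence meager. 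On the complement, flat transport preserves the space of Hodge tensors of each bidegree, so $\G_s$ is locally constant under parallel transport.

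For (1), let $U\subset S$ denote the complement of the meager set from (2). Every Hodge tensor $t$ at a point $s\in U$ extends to a holomorphic section of the corresponding tensor bundle over $U$ that is Hodge at every point of $U$ by the defining property of $U$; since $t$ is rational and all fibers agree rationally under flat transport, this section is flat. Flat sections are monodromy-invariant, so the generic Hodge tensors are fixed by the image of $\pi_1(U,s_0)$. Because $S\setminus U$ is a countable union of proper complex analytic subvarieties and hence of real codimension $\geq 2$, the map $\pi_1(U,s_0)\to \pi_1(S,s_0)$ is surjective. Therefore all of $\Gamma$ fixes the generic Hodge tensors, which by definition of the Mumford--Tate group means $\Gamma\subset \G_{s_0}(\Q)$, and so $\H_{s_0}\subset \G_{s_0}$.

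For (3), apply the theorem of the fixed part (in Andr\'e's version for admissible VMHS) to the adjoint representation: $\mathfrak g_s\subset \End(V_{\Q,s})$ is a sub-MHS, and I claim that $\mathfrak h_s\subset \mathfrak g_s$ is also a sub-MHS. Indeed, $\mathfrak h_s$ is generated by monodromy logarithms together with their $\G_s$-translates, and Schmid's nilpotent orbit theorem (extended by Kashiwara and Steenbrink--Zucker to the admissible setting) puts each monodromy logarithm in a sub-MHS of $\End(V)$. Since $\G_s$ stabilizes the Hodge tensors cutting out $\mathfrak h_s$, the subalgebra $\mathfrak h_s$ is an ideal of $\mathfrak g_s$, giving $\H_s\trianglelefteq \G_s$. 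To upgrade to normality in $\G_s^{\mathrm{der}}$, observe that polarizations on the graded pieces force the center of $\G_s$ to act by finite-order characters on each $\gr^W_k V$, while monodromy acts unipotently on $\gr^W V$ (by semisimplicity of polarized pure VHS); so $\Gamma$ lies in $\G_s^{\mathrm{der}}$ and thus $\H_s$ is a normal subgroup of $\G_s^{\mathrm{der}}$.

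The principal obstacle, and the reason the result is attributed to Andr\'e rather than Deligne, is part (3) in the mixed setting: the naive statement of the theorem of the fixed part fails for VMHS and must be replaced by Andr\'e's version respecting the weight filtration. The pure-case Killing-form argument showing $\mathfrak h$ is an ideal must accordingly be replaced by an argument using the graded polarization on each $\gr^W V$ together with admissibility at the boundary.
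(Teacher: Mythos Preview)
The paper does not actually prove this theorem; it simply cites \cite[Lemma 4, Thm 1]{Andre}. Your sketch of (1) and (2) is the standard argument and is correct.

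Your argument for (3), however, has genuine gaps. The Lie algebra $\mathfrak h_s$ is not generated by local monodromy logarithms: those are particular nilpotent elements attached to boundary loops, whereas $\mathfrak h_s$ is the Lie algebra of the Zariski closure of the full global monodromy and typically contains many semisimple elements. Worse, the step is circular---you take $\G_s$-translates of these logarithms to generate $\mathfrak h_s$, but $\G_s$-stability of $\mathfrak h_s$ is exactly the normality you are trying to establish. Your final step is also incorrect on both counts: the center of $\G_s$ does \emph{not} act by finite-order characters on $\gr^W_k V$ (it contains the weight homothety $t\mapsto t^k$), and monodromy does \emph{not} act unipotently on $\gr^W V$---Deligne's theorem says the monodromy representation on each graded piece is semisimple, which is nearly the opposite assertion. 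The correct route is: every character of $\G_s$ corresponds to a one-dimensional polarizable Hodge structure, hence a Tate twist, hence a constant variation with trivial monodromy, so $\Gamma\subset\ker(\G_s\to\G_s^{\mathrm{ab}})=\G_s^{\mathrm{der}}$; and for normality, Andr\'e's theorem of the fixed part shows that the monodromy-invariants in every tensor representation form a sub-MHS, from which a Tannakian argument gives that $\G_s$ normalizes $\H_s$. You correctly identified the fixed-part theorem as the crucial input, but the implementation of (3) needs to be redone along these lines.
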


\begin{proof}

These are \cite[Lemma 4, Thm 1]{Andre}.

\end{proof}

\begin{lemma}\label{ZZar}In the notation of the introduction,
$\Omega_\cV=\G(\C)\Sigma_\cV$ and $\G_\full(\C)$ acts simply transitively on fibers of $\Omega_\cV$ over $S$.

\end{lemma}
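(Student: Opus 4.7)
My approach is to first establish that $\Sigma_\cV$ is $\Gamma$-invariant, then to identify the $\G_\full(\C)$-orbit of $\Sigma_\cV$ with $\Omega_\cV$ using the Peterzil--Starchenko definable Chow theorem.

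The first step is to check that $\Sigma_\cV$ is invariant under the post-composition action of $\Gamma \subset \bfGL(V_{\C,0})$ on $\Isom$. Indeed, $\widetilde{\sigma}_\cV(\tilde{s})$ is the flat transport isomorphism $V_{\C,s} \to V_{\C,0}$ along a path representing $\tilde{s} \in \widetilde{S^\an}$; a deck transformation by $\gamma \in \Gamma$ concatenates the path with a loop of monodromy $\gamma$, and the flat transport of the concatenation is $\gamma \circ \widetilde{\sigma}_\cV(\tilde{s})$. Taking Zariski closures, $\Omega_\cV$ is invariant under the Zariski closure $\G_\full(\C)$ of $\Gamma$, and in particular under $\G(\C)$.

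Next I would show that $T := \G_\full(\C) \cdot \Sigma_\cV$ is a closed, complex analytic, definable subset of $\Isom^\an$. Using the definable fundamental domain $\cF_\cV$ from Corollary \ref{definableperiods} (which satisfies $\Gamma \cdot \cF_\cV = \Sigma_\cV$), one has $T = \G_\full(\C) \cdot \cF_\cV$, which is the image of the definable set $\G_\full \times \cF_\cV$ under the algebraic action map, hence definable. For closedness, I would exploit that $\G_\full \subset \bfGL(V_{\C,0})$ is a closed algebraic subgroup, so the quotient bundle $Q := \Isom/\G_\full(\C) \to S^\an$ is Hausdorff; since each fiber $\Sigma_{\cV,s}$ is a single $\Gamma$-orbit, the inclusion $\Sigma_\cV \hookrightarrow \Isom$ descends to a holomorphic section of $Q \to S^\an$, whose image is closed, and $T$ is precisely the preimage of this section. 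Complex analyticity is clear because $T$ is locally the $\G_\full(\C)$-orbit of a holomorphic section. By Theorem \ref{definechow}, $T$ is therefore algebraic.

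To conclude: $\Omega_\cV \subset T$ since $T$ is algebraic and contains $\Sigma_\cV$, while $T \subset \Omega_\cV$ by the $\G_\full(\C)$-invariance of $\Omega_\cV$, so $\Omega_\cV = T$. Since $\Gamma$ is Zariski dense in $\G_\full$, it surjects onto the finite group $\pi_0(\G_\full)$, giving $\G_\full(\C) = \Gamma \cdot \G(\C)$; combined with $\Gamma$-invariance of $\Sigma_\cV$ this yields $T = \G(\C) \cdot \Sigma_\cV$. Finally, each fiber $\Omega_{\cV,s}$ equals $\G_\full(\C) \cdot p$ for any $p \in \Sigma_{\cV,s}$, and because $\G_\full(\C) \subset \bfGL(V_{\C,0})$ acts freely on the $\bfGL(V_{\C,0})$-torsor $\Isom_s$, this action is simply transitive. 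The main obstacle is the closedness of $T$ inside $\Isom^\an$, which requires the fact that $\G_\full$ sits as a closed subgroup of the structure group of the torsor $\Isom$ and that the fiberwise orbits of $\Gamma$ are precisely the fibers of $\Sigma_\cV \to S^\an$.
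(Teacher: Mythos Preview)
Your proof is correct and follows essentially the same strategy as the paper: monodromy invariance of $\Sigma_\cV$ gives $\G_\full(\C)$-invariance of its Zariski closure, the orbit is definable via the fundamental set $\cF_\cV$ and closed analytic, and then definable Chow (Theorem~\ref{definechow}) forces algebraicity. The only differences are cosmetic: the paper works directly with $\G(\C)\Sigma_\cV$ rather than passing through $\G_\full(\C)\Sigma_\cV$ and reducing via $\G_\full(\C)=\Gamma\cdot\G(\C)$, and for closedness/analyticity the paper simply observes that locally over $S^\an$ the set $\Sigma_\cV$ is a $\Gamma$-orbit of a single flat section, so $\G(\C)\Sigma_\cV$ is locally $\G(\C)\cdot s_1$, which is visibly closed analytic---this is the same local picture you use for analyticity, and it makes your quotient-bundle detour for closedness unnecessary (though not wrong).
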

\begin{proof}
As $\Sigma_\cV$ is $\G(\Z)$-invariant, it follows that $(\Sigma_\cV)^{\Zar}$ is $\G(\C)$-invariant, and therefore contains $\G(\C)\Sigma_\cV$.  On the other hand, it follows from Lemma \ref{definableperiods} and locally choosing a flat section that $\G(\C)\Sigma_\cV$ is definable, and it is evidently a closed analytic subvariety. By Theorem \ref{definechow} it follows that it is algebraic and thus 
$\Omega_\cV=\G(\C)\Sigma_\cV$.

For the second part of the claim, note first that $\Sigma_{\cV}$ and hence $\Omega_{\cV}$ are invariant by the image of monodromy, and thus $\Omega_{\cV}$ is invariant under $\G_\full(\C)$.  Now, locally on $S^{\an}$  we have $\Sigma_\cV$ is given by a union of sections $s_i$ any two of which differ by an element of $\G_\full(\Z)$, and thus analytically locally on the base $\Omega^{\an}_{\cV} = \G_{\full}(\C)s_1$.
\end{proof}

\subsection{Flat torsors}\label{secttor}

Let $\G$ be a complex algebraic group.  Recall that an algebraic $\G$-torsor over a smooth variety $S$ is an algebraic $S$-variety $\pi:P\to S$ equipped with an algebraic left action by $\G$ such that the induced map
\[\G\times_S P\to P\times_S P,\;\;\; (g,x)\mapsto (gx,x)\]
is an isomorphism.  This means $\G(\C)$ acts simply transitively on the fibers of $P\to S$.

An algebraic flat connection on $P$ is an algebraic splitting of the extension
\[0\to T_{P/S}\to T_P\to \pi^*T_S\to 0\]
which is $\G$-invariant and with the property that the induced map $\pi^* T_S\to T_P$ is a foliation.  The leaves of $P$ are the leaves $\mathcal{L}$ of this foliation.

Given an algebraic flat $\G$-torsor $\pi:P\to S$, choosing a point $p_0\in P$ and setting $s_0=\pi(p_0)$ we obtain a monodromy representation $\rho:\pi_1(S,s_0)\to \G(\C)$ by solving the connection.  This data determines $\pi: P\to S$ analytically, as we recover
\[P^\an=(\widetilde{S^\an}\times \G(\C))/\Gamma.\]
where $\gamma\in\Gamma$ acts via the canonical right action on the first factor and via right multiplication by $\rho(\gamma)$ on the second.  Moreover, the constant sections $\widetilde{S^\an}\times g$ map to the leaves of $P$.  We define the full algebraic monodromy group (resp. algebraic monodromy group) of $P$ to be the Zariski closure (resp. identity component of the Zariski closure) of the image of $\rho$ in $\G$.

\begin{lemma}\label{everythingisaGtorsor}
$\Omega_\V$ is naturally an algebraic flat $\G_\full$-torsor for which $\Sigma_\V\subset\Omega_\V$ is a leaf and with algebraic monodromy $\G$.
\end{lemma}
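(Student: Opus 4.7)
The plan is to piece together four ingredients using Lemma \ref{ZZar}: the algebraic $\G_\full$-torsor structure on $\Omega_\cV \to S$, the algebraic flat connection, the identification of $\Sigma_\cV$ as a leaf, and the computation of the algebraic monodromy.

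First, I would deduce the torsor structure directly from Lemma \ref{ZZar}. The $\bfGL(V_{\C,0})$-action on $\Isom$ by post-composition is algebraic, and its restriction to the algebraic subgroup $\G_\full$ preserves $\Omega_\cV$ (by the lemma) and acts simply transitively on the fibers over $S$ (also by the lemma). Hence $\Omega_\cV \to S$ is an algebraic $\G_\full$-torsor.

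Next, I would obtain the flat connection by restricting the algebraic flat connection on $\bE$ associated to $\cE = \calHom(\cV, \cV_0)$ first to $\Isom$ and then to $\Omega_\cV$. The induced rank-$\dim S$ foliation on $\Isom$ is $\bfGL(V_{\C,0})$-invariant: because $\cV_0$ is the trivially-flat variation with fiber $V_{\C,0}$, post-composition by any constant $g \in \bfGL(V_{\C,0})$ commutes with $\nabla$ on $\bE$ and so preserves flat sections. Combined with the identity $\Omega_\cV = \G_\full(\C)\Sigma_\cV$ from Lemma \ref{ZZar} and the fact that $\Sigma_\cV$ is a flat leaf by construction (since $\sigma_\cV$ solves the connection), this shows $\Omega_\cV$ is the union of the $\G_\full(\C)$-translates of $\Sigma_\cV$, each of which is itself a flat leaf. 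Thus $\Omega_\cV$ is saturated by leaves and the foliation restricts to a $\G_\full$-invariant rank-$\dim S$ foliation on $\Omega_\cV$, defining the algebraic flat connection on the torsor and simultaneously exhibiting $\Sigma_\cV$ as a leaf.

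Finally, to identify the algebraic monodromy, I would take the point $\id \in \Omega_\cV$ above $s_0$ as basepoint; it lies on $\Sigma_\cV$. Solving the connection along a loop $\gamma$ at $s_0$ amounts to transporting along the leaf $\Sigma_\cV$, which by construction terminates at the flat transport operator $\rho(\gamma) \in \Gamma \subset \G_\full(\C)$. Hence the monodromy representation of the torsor has image exactly $\Gamma$, whose Zariski closure in $\G_\full$ is all of $\G_\full$ by the defining property of $\G_\full$, with identity component $\G$. I expect the step requiring the most care to be the restriction of the foliation to $\Omega_\cV$: one must combine the $\bfGL(V_{\C,0})$-invariance of the foliation with the structural identity from Lemma \ref{ZZar} to see that $\Omega_\cV$ is saturated by leaves; everything else is essentially bookkeeping once Lemma \ref{ZZar} is in hand.
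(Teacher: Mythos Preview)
Your proposal is correct and follows essentially the same route as the paper: restrict the flat connection from $\bE$ to $\Isom$ to $\Omega_\cV$, use Lemma \ref{ZZar} to see that $\Omega_\cV=\G_\full(\C)\Sigma_\cV$ is a union of leaves on which $\G_\full(\C)$ acts simply transitively fiberwise, and note that $\Sigma_\cV$ is a leaf by construction. The paper's proof is terser and omits the explicit monodromy verification you supply at the end, but there is no substantive difference in the argument.
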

\begin{proof}
The geometric vector bundle $\mathbb{E}$ has a natural flat connection, which restricts to a flat connection on $ \mathbb{I}\subset \mathbb{E}$ giving $\mathbb{I}$ the structure of an algebraic flat $\bfGL(V_{\C,0})$-torsor, with $\bfGL(V_{\C,0})$ acting by post-composition.  By Lemma \ref{ZZar} $\Omega_\V\subset \mathbb{I}$ is a union of leaves of $\mathbb{I}$, so the connection restricts to a flat connection on $\Omega_\V$.  The action of $\G_\full(\C)$ is simply transitive on fibers (again by Lemma \ref{ZZar}), and $\Sigma_\V$ is a leaf by definition.
\end{proof}

\subsection{Period domains and period maps}\label{perioddom}
We recall some definitions regarding weak Mumford--Tate domains.  See \cite{GGK,bbkt} for details.

\def\MT{\mathbf{MT}}

Let $\D_0$ be a period domain of graded-polarized integral mixed Hodge structures with generic Mumford--Tate group $\G_0$.  For any point $p\in\D$, let $\G$ be a normal $\Q$-subgroup of its Mumford--Tate group $\MT_p$ and $\bfU$ the unipotent radical of $\G$.  The orbit $\D:=\G(\R)\bfU(\C)\cdot p$ is a closed complex subspace of $\D_0$ whose generic Mumford--Tate group is $\MT_p$.  We call such a subspace a \emph{weak Mumford-Tate (sub)domain} of $\D_0$.  Each such $\D$ is naturally contained as a semialgebraic subset in a complex algebraic variety $\check{\D}$ called its dual.  

The quotient $\G(\Z)\backslash \D$ has the natural structure of a definable analytic variety, and for any period map $\tilde \phi:S^\an\to \G_0(\Z)\backslash\D_0$ the inverse image of $\G(\Z)\backslash\D$ is an algebraic subvariety of $S$ \cite{bbkt}.  We call each component a \emph{weak Mumford--Tate subvariety} of $S$.  Note that by Theorem \ref{MTfacts} and the above definitions, we have the following important property:
\begin{cor}\label{corMT}
Let $\cV$ be an admissible variation of graded-polarizable integral mixed Hodge structures on $S$.  Then a subvariety $S'\subset S$ is contained in a proper weak Mumford--Tate subvariety if and only if the algebraic monodromy group $\G'$ of the restriction $\cV_{S'}$ is strictly smaller then the algebraic monodromy group $\G$ of $\cV$.
\end{cor}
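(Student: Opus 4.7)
The plan is to unwind the definitions in \S\ref{perioddom} and combine them with Theorem~\ref{MTfacts} together with the standard orbit description of admissible period maps: for an admissible VMHS on an irreducible base with algebraic monodromy group $\mathbf{K}$ and unipotent radical $\mathbf{V}$, the lifted period map lands inside the $\mathbf{K}(\R)\mathbf{V}(\C)$-orbit of any chosen basepoint in $\D_0$. With this orbit description in hand, both implications reduce to chasing the definitions.

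For the implication $(\Leftarrow)$, assume $\G'\subsetneq\G$.  I would pick a point $s_0\in S'$ which is generic for $\cV_{S'}$ in the sense of Theorem~\ref{MTfacts}(2)--(3), and set $p:=\tilde\phi(s_0)\in\D_0$.  Theorem~\ref{MTfacts}(3) applied to $\cV_{S'}$ then exhibits $\G'$ as a normal $\Q$-subgroup of (the derived subgroup of) $\MT_p$, which is precisely the input needed by the setup in \S\ref{perioddom} for the orbit $\D':=\G'(\R)\bfU'(\C)\cdot p$, with $\bfU'$ the unipotent radical of $\G'$, to be a weak Mumford--Tate subdomain of $\D_0$.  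The orbit description for $\cV_{S'}$ then places $\tilde\phi(S'^{\an})\subset\G'(\Z)\backslash\D'$, so $S'$ is contained in the weak Mumford--Tate subvariety $Z$ cut out by $\D'$.  This $Z$ is a \emph{proper} subvariety of $S$ because the algebraic monodromy of $\cV_Z$ is contained in $\G'\subsetneq\G$, whereas the algebraic monodromy of $\cV$ on all of $S$ is $\G$, so $S\not\subset Z$.

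For the implication $(\Rightarrow)$, suppose $S'\subset Z$ with $Z$ a component of $\tilde\phi^{-1}(\H(\Z)\backslash\D_\H)$ for some weak Mumford--Tate subdomain $\D_\H=\H(\R)\bfU(\C)\cdot p$.  Since $\tilde\phi(Z^{\an})\subset\H(\Z)\backslash\D_\H$, the image of the monodromy representation of $\cV_Z$ lies in $\H(\Z)$, and hence the same holds for $\cV_{S'}$, giving $\G'\subset\H$.  It remains to check $\H\subsetneq\G$: otherwise $\G\subset\H$, and the orbit description applied to $\cV$ on $S$ would force $\tilde\phi(S^{\an})\subset\H(\Z)\backslash\D_\H$; the preimage would then equal $S$ so that $Z=S$, contradicting the properness assumption.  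Combining yields $\G'\subset\H\subsetneq\G$.

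The main non-formal input is the orbit description appealed to in both directions.  This is classical for pure variations (Deligne, Schmid) and in the admissible mixed case has been established in the references in \S\ref{perioddom} (notably \cite{GGK,bbkt}).  Once it is granted, the corollary is a routine unwinding of definitions, with Theorem~\ref{MTfacts}(3) supplying the structural compatibility between algebraic monodromy groups and the groups appearing in weak Mumford--Tate subdomains.
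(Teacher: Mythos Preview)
Your proposal is correct and follows exactly the approach the paper has in mind: the paper does not give a proof beyond the sentence ``Note that by Theorem~\ref{MTfacts} and the above definitions, we have the following important property,'' and you have supplied the routine unwinding of those definitions together with the orbit description of the period map.

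One small wrinkle worth flagging: the paper's definition of weak Mumford--Tate subdomain in \S\ref{perioddom} asks for $\G$ to be normal in $\MT_p$, whereas Theorem~\ref{MTfacts}(3) only asserts normality in the \emph{derived} subgroup of $\MT_p$. You noticed this (hence your parenthetical), and indeed the literature on weak Mumford--Tate domains (e.g.\ \cite{bbkt}) typically works with normal subgroups of the derived Mumford--Tate group, so the paper's phrasing is slightly looser than what is actually used. This is a harmless discrepancy in conventions rather than a gap in your argument.
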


Suppose $\cV=(V_\Z,W_\bullet V_\Q,F^\bullet V)$ is an admissible variation of graded-polarizable integral mixed Hodge structures, and let $\phi:\widetilde {S^\an}\to \D$ be the associated period map, where $\pi:\widetilde{ S^\an}\to S$ is the minimal cover trivializing $V_\Z$.  Consider the map $\pi\times\phi:\widetilde {S^\an}\to S^\an\times\D$, which is a closed embedding of a component of $S^n\times_{\G(\Z)\backslash\D}\D$.  For the next section, we observe that $\pi\times \phi|_{\cF}$ is definable by \cite[Theorem 1.1]{bbkt}, where $\cF$ is as in \S\ref{funset}.



\def\bfM{\mathbf{M}}

\section{Two proofs of Theorem  \ref{main}}\label{secproof}

\subsection{First proof}
We first give a proof using the Ax--Schanuel for principal bundles of \cite{diffas}. For this we shall need the following definition:

\begin{defn}A complex algebraic group $\G$ is \emph{sparse} if every proper complex analytic Lie subgroup is contained in a proper complex algebraic Lie subgroup.  
\end{defn}

\begin{lemma}[{\cite[Lemma 3.3]{chiu2}}]\label{lemmachiu}  The algebraic monodromy of an admissible variation of polarizable integral mixed Hodge structures is sparse.

\end{lemma}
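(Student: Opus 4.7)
The plan is to combine Hodge-theoretic structure of $\G$ with standard Lie theory. First I would establish that $\G$ fits into an exact sequence
\[1\to\bfU\to\G\to\G^{ss}\to 1\]
with $\bfU$ unipotent and $\G^{ss}$ semisimple: since $\gr^W_\bullet\cV$ is a direct sum of polarizable pure VHS, Deligne's semisimplicity theorem ensures the algebraic monodromy on each graded piece is semisimple. The unipotent radical $\bfU$ acts trivially on each graded piece (its image being a unipotent subgroup of a semisimple group), so $\G/\bfU$ embeds into $\prod_k\bfGL(\gr^W_k V_{\C,0})$ with image semisimple in each factor; a central torus of $\G/\bfU$ would then map injectively into the finite center of a product of semisimple groups, a contradiction, giving that $\G^{ss}$ is semisimple.

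Let $H\subset \G$ be a proper connected complex analytic Lie subgroup with Lie algebra $\mathfrak{h}\subsetneq\mathfrak{g}$; since $H$ is determined by $\mathfrak{h}$, it suffices to find a proper algebraic Lie subalgebra of $\mathfrak{g}$ containing $\mathfrak{h}$. Let $\pi\colon \mathfrak{g}\to\mathfrak{g}^{ss}$ be the projection. If $\pi(\mathfrak{h})\subsetneq\mathfrak{g}^{ss}$, I would invoke the classical fact that every proper Lie subalgebra of a complex semisimple Lie algebra is contained in a maximal one, and that maximal Lie subalgebras of complex semisimple Lie algebras are algebraic (being either parabolic or reductive, by the Morozov--Karpelevich classification). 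The preimage under $\pi$ of such a subalgebra of $\mathfrak{g}^{ss}$ is a proper algebraic Lie subalgebra of $\mathfrak{g}$ containing $\mathfrak{h}$.

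If instead $\pi(\mathfrak{h})=\mathfrak{g}^{ss}$, I claim that $\mathfrak{h}$ itself is already algebraic, so $H$ serves as its own proper algebraic overgroup. By Levi's decomposition, write $\mathfrak{h}=\mathfrak{s}_\mathfrak{h}\ltimes\mathfrak{r}$ with $\mathfrak{s}_\mathfrak{h}$ semisimple and $\mathfrak{r}$ the solvable radical. Then $\pi(\mathfrak{r})$ is a solvable ideal of the semisimple $\mathfrak{g}^{ss}$, hence zero, so $\mathfrak{r}\subset\mathfrak{u}:=\Lie(\bfU)$; a Lie subalgebra of the nilpotent $\mathfrak{u}$ is automatically algebraic since the exponential on $\bfU$ is a polynomial isomorphism. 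Meanwhile $\pi|_{\mathfrak{s}_\mathfrak{h}}$ is injective (as $\mathfrak{s}_\mathfrak{h}\cap\mathfrak{u}=0$) and surjective, hence an isomorphism identifying $\mathfrak{s}_\mathfrak{h}$ with a Levi complement to $\mathfrak{u}$ in $\mathfrak{g}$, which is the Lie algebra of an algebraic (Levi) subgroup. Since $[\mathfrak{s}_\mathfrak{h},\mathfrak{r}]\subset\mathfrak{h}\cap\mathfrak{u}=\mathfrak{r}$, the connected subgroup generated by this Levi and the algebraic unipotent $\exp(\mathfrak{r})$ is a semidirect product of algebraic groups, itself algebraic, with Lie algebra $\mathfrak{h}$.

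The main obstacle is the Lie-theoretic input of Case A: that every proper Lie subalgebra of a complex semisimple Lie algebra lies in a proper algebraic one. This rests on the classical but nontrivial classification of maximal Lie subalgebras of complex semisimple Lie algebras. Case B, by contrast, is essentially formal once Deligne's semisimplicity and Levi decomposition are invoked.
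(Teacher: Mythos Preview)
The paper does not prove this lemma; it is stated with a citation to \cite[Lemma 3.3]{chiu2} and used as a black box. So there is no proof in the paper to compare against, and your argument stands or falls on its own.

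Your argument is essentially correct and follows the natural strategy. A couple of small remarks. In Case A you invoke the Morozov--Karpelevich/Dynkin classification to justify that maximal subalgebras of a semisimple $\mathfrak{g}^{ss}$ are algebraic; this is true, but you can bypass the classification entirely: for any maximal subalgebra $\mathfrak{m}\subset\mathfrak{g}^{ss}$, its normalizer $N_{\mathfrak{g}^{ss}}(\mathfrak{m})$ is the Lie algebra of the algebraic group $N_{G^{ss}}(\mathfrak{m})$, hence algebraic, and contains $\mathfrak{m}$; by maximality either $\mathfrak{m}=N_{\mathfrak{g}^{ss}}(\mathfrak{m})$ (so $\mathfrak{m}$ is algebraic) or $\mathfrak{m}$ is an ideal, in which case semisimplicity of $\mathfrak{g}^{ss}$ makes it a sum of simple factors. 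This removes what you flagged as the ``main obstacle.'' In Case B, the only point to be slightly careful with is that $\mathfrak{r}$ is not just $\ad(\mathfrak{s}_{\mathfrak{h}})$-stable but $\Ad(S_{\mathfrak{h}})$-stable for the corresponding algebraic Levi $S_{\mathfrak{h}}$; this follows since $S_{\mathfrak{h}}$ is connected. With these clarifications your proof is complete.
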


\begin{thm}[{\cite[Thm. A]{diffas}}]\label{torsorAS}  Suppose $\G$ is sparse.
Let $\pi:P\to S$ be an algebraic flat $\G$-torsor, $W\subset P$ a subvariety, $\mathcal{L}$ a leaf of $P$, and $U$ a component of $W^\an\cap\mathcal{L}$.  If
\[\codim_W U<\dim \G\]
then $\pi(U)^\Zar\subset S$ has algebraic monodromy of strictly smaller dimension than $\G$.
\end{thm}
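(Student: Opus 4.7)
The plan is to prove this by a Pila--Zannier style o-minimal argument, using sparseness of $\G$ as the crucial final step that promotes an analytic stabilizer subgroup to an algebraic one.

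\emph{Step 1: analytic picture.} Using the monodromy representation $\rho:\pi_1(S^{\an},s_0)\to\G(\C)$ attached to the flat connection on $P$, write $P^{\an}=(\widetilde{S^{\an}}\times\G(\C))/\Gamma$, where $\Gamma$ is the image of $\rho$ acting on the second factor by right multiplication. Each leaf of $P$ lifts to horizontal slices $\widetilde{S^{\an}}\times\{g\}$, so $\mathcal{L}$ lifts to the slice $\widetilde{S^{\an}}\times\{g_0\}$ for some $g_0$. Let $\widetilde{W}\subset\widetilde{S^{\an}}\times\G(\C)$ be the preimage of $W$ and choose a lift $\widetilde{U}\subset\widetilde{W}\cap(\widetilde{S^{\an}}\times\{g_0\})$ of $U$. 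Note $\dim\widetilde{U}=\dim U>\dim W-\dim\G$.

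\emph{Step 2: many translates.} For each $\gamma\in\Gamma$, the translate $\gamma\cdot\widetilde{U}$ (acting on the $\G(\C)$-factor) is contained in $\widetilde{W}$ and in the horizontal slice $\widetilde{S^{\an}}\times\{\gamma g_0\}$. One therefore has a $\Gamma$-indexed family of $\dim U$-dimensional analytic pieces of $\widetilde{W}$, each supported on a distinct horizontal leaf. The codimension hypothesis makes this family anomalously large.

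\emph{Step 3: definability and point-counting.} Fix a definable fundamental domain $\cF\subset\widetilde{S^{\an}}$, so that $\cF\times\G(\C)$ is a definable fundamental domain for $P$. Form the definable set
\[A=\bigl\{g\in\G(\C) : \dim\bigl((g\cdot\widetilde{U})\cap\widetilde{W}\cap(\cF\times\G(\C))\bigr)\geq\dim U\bigr\}.\]
Choosing a faithful linear realization of $\Gamma$, the dimension inequality $\codim_W U<\dim\G$ and the algebraicity of $W$ force the count of $\gamma\in\Gamma$ (producing $g=\rho(\gamma)g_0\in A$) of height at most $T$ to grow at least polynomially in $T$. Applying Pila--Wilkie (Theorem \ref{pointcount2}) extracts a positive-dimensional semialgebraic family $B\subset A$, and by definable Chow (Theorem \ref{definechow}) its analytic closure is a positive-dimensional complex analytic subset of $\G$ preserving (an open piece of) $\widetilde{U}$ inside $\widetilde{W}$.

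\emph{Step 4: from analytic to algebraic via sparseness.} Closing $B$ under the group law and intersecting with its inverse, one obtains a positive-dimensional complex analytic Lie subgroup $\H^{\an}\subset\G$ stabilizing the data $(\widetilde{U},\widetilde{W})$ up to finite index. By the hypothesis that $\G$ is sparse, $\H^{\an}$ is contained in a proper algebraic subgroup $\H\subsetneq\G$. Unwinding, $\H$ preserves the family of horizontal lifts of $U$ above $\pi(U)^{\Zar}$, so the monodromy representation restricted to $\pi_1(\pi(U)^{\Zar})$ is conjugate into $\H$ up to finite index. Its Zariski closure (and in particular its identity component, the algebraic monodromy of $\pi(U)^{\Zar}$) therefore has dimension strictly less than $\dim\G$.

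\emph{Main obstacle.} The delicate step is Step 3: one must arrange the definable counting problem so that the word-length or linearized height complexity of $\gamma\in\Gamma$ really does correlate with the overlap dimension of $\gamma\cdot\widetilde{U}$ with $\widetilde{W}$, and so that the Pila--Wilkie output assembles into genuine group-theoretic data on $\G$ rather than accidental coincidences in the cover. Extracting an actual \emph{subgroup} (rather than a mere semialgebraic set) from Pila--Wilkie typically requires a symmetry or closure-under-multiplication argument exploiting the $\G$-equivariance of the torsor structure of $P$. Sparseness is then the clean final bridge to an honest algebraic subgroup, giving the desired strict containment of algebraic monodromy.
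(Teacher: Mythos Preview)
The paper does not prove this theorem at all: it is quoted as Theorem~A of the external reference \cite{diffas} and used as a black box in the ``first proof'' of Theorem~\ref{main}. So there is no in-paper proof to compare your proposal against. What can be said is that the method of \cite{diffas} is differential-algebraic/foliation-theoretic (via differential Galois theory of the principal connection), not o-minimal, and in particular does not pass through Pila--Wilkie.

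Your proposal is an o-minimal Pila--Zannier sketch, and it has two genuine gaps that are specific to the generality of Theorem~\ref{torsorAS}. First, definability: you freely treat $\widetilde W\cap(\cF\times\G(\C))$ and the incidence set $A$ as definable, but for an \emph{arbitrary} algebraic flat $\G$-torsor there is no reason the flat and algebraic structures on $P$ are definably compatible. In this paper that compatibility is supplied by Theorem~\ref{alg=flat}, which is a Hodge-theoretic input, not a general fact about flat torsors. Second, and more seriously, Step~3 asserts that the codimension hypothesis alone forces polynomially many $\gamma\in\Gamma$ of height $\leq T$ to land in $A$. You do not justify this, and in the general torsor setting there is no mechanism that produces such a height bound: in the paper's own o-minimal argument (the ``second proof'' of Theorem~\ref{main}) the corresponding polynomial growth is obtained from Proposition~\ref{chiuprop}, which relies on the geometry of mixed period domains. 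Without a replacement for that input, Pila--Wilkie cannot be triggered, and the rest of your argument (extracting a semialgebraic curve, closing up to a subgroup, invoking sparseness) never gets off the ground. Your ``main obstacle'' paragraph correctly identifies the soft spot but does not resolve it.
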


\begin{proof}[Proof of Theorem \ref{main}] Note by Lemma \ref{everythingisaGtorsor} that $\Omega_\cV$ is an algebraic flat $\G_{\full}$-torsor and that $\Sigma_\cV$ is a leaf. Thus, the theorem follows immediately from Theorem \ref{torsorAS} and Lemma \ref{lemmachiu}. 

\end{proof}
\begin{remark}
The proof only uses Hodge theory to establish that the algebraic monodromy of the underlying local system is sparse, and it is natural to ask whether Theorem \ref{torsorAS} is true for any local system. The following example shows it is not.
\end{remark}
\begin{eg}
Let $S=A$ be a simple abelian surface and let $\omega$ be a nonzero differential 1-form.  Consider the local system with monodromy
\[\pi_1(A,0)\to\G_m^2,\;\;\;\gamma\mapsto \left(e^{\int_\gamma\omega},e^{\lambda\int_\gamma\omega}\right).\]
The associated $\G_m^2$-torsor $\pi:P \to A$ (equipped with its canonical algebraic structure) has a section $s:A\to P$ whose lift is given by
\[\tilde s:\widetilde {A^\an}\mapsto \G_m^2,\;\;\; a\mapsto \left(e^{\int_0^a\omega},e^{\lambda\int_0^a\omega}\right)\]
which is algebraic by GAGA.  For $\lambda$ irrational, the fibers of $\tilde s$ are one-dimensional and project to the intersections $\cL\cap s(A)$, which are therefore one-dimensional as well.  The algebraic monodromy is all of $\G_m^2$, but if a fiber were not Zariski dense in $A$ it would necessarily be an elliptic curve factor of $A$. 
\end{eg}

\subsection{Second proof}
In this section we describe the proof of Theorem \ref{main} using o-minimality in the spirit of \cite{MPT,BTAS,chiu,GaoKlingler}.

With the notation as in the setup of Theorem \ref{main}, we prove the conclusion by induction on the triple $(\dim S,\dim W-\dim U,-\dim U)$ with the lexicographical ordering, the base case of $\dim S=0$ being trivial. We thus assume that the theorem is valid for all lexicographically previous triples.  Suppose we have a $W\subset \Omega_\cV$ as in the statement of the theorem and a component $U$ of $W\cap\Sigma_\cV$ whose projection $\pi(U)$ is not contained in a proper weak Mumford--Tate subvariety of $S$.  Note that $\pi(U)$ is Zariski dense in $S$ by the inductive hypothesis, as otherwise we could replace $S$ with the Zariski closure $S'$ of $\pi(U)$ and $W$ with the intersection $W\cap \Omega_{\cV_{S'}}$.




Recall that $\G(\C)$ acts on $\Omega_\cV$ (algebraically) by post-composition.  Let $\Gamma\subset\G(\Q)$ be the image of the monodromy representation of $V_\Z$ (possibly after replacing $S$ with a finite cover).  As in \cite[\S3]{MPT}, we consider the component $M$ of the Hilbert scheme $\Hilb\left(\overline{\Omega}_\cV\right)$ containing the closure of $W$ for some equivariant algebraic compactification $\overline{\Omega}_\cV$ of $\Omega_\cV$.  Let $\cW\subset \Omega_\cV\times M$ be the universal family and $\cW_{\Sigma}$ the intersection with $\Sigma_\cV\times M^\an$, which is $\Gamma$-invariant and proper over $\Sigma_\cV$.  The quotient $\cU:=\Gamma\backslash \cW_{\Sigma}$ is naturally a definable analytic variety as follows.  Letting $\cF_\cV\subset\Sigma_\cV$ be the subset from \S\ref{funset}, we set $\cG_\cV= \cF_\cV\times M^\an$, which is an open definable fundamental set for $\Gamma$ on $\Sigma_V\times M^\an$.  Let $\sim$ be the induced definable \'etale equivalence relation on $\cG_\cV$, and we take the definable structure induced by the identification $\cU\cong(\cW_{\Sigma}\cap \cG_\cV)/\sim$. 

The natural map $\cU\to S^\an$ is proper definable analytic.  We think of $\cU$ as parametrizing $\Gamma$-orbits of pairs $(W',p)$ with $[W']\in M$ and $p\in W'^\an\cap\Sigma_\cV$. There is a closed $\Gamma$-invariant definable analytic subvariety $A\subset \cU$ parametrizing pairs $(W',p)$ with $\dim_p(W'^\an\cap \Sigma_\cV)\geq\dim U$.  If $A_0$ is an irreducible component of $A$ containing $(W,p)$ for all $p\in U$, then $A_0$ descends to a closed definable analytic subvariety $B_0\subset \cU$ by taking the quotient $B_0:=(A_0\cap \cG_\cV)/\sim$.  

We have a natural proper definable analytic map $q:B_0\to S^\an$. By Theorem \ref{definechow} the image is algebraic and therefore $q$ is surjective by the inductive hypothesis and the properness of the map. Note that the monodromy $\Gamma_0$ of the pullback $q^*V_\Z$ stabilizes $A_0$.  Since $B_0$ surjects onto $S$, we have that $\Gamma_0\subset \Gamma$ is finite index, so the identity component of the $\Q$-Zariski closure of $\Gamma_0$ is $\G$. 

Letting $Y\subset M^\an$ be the projection of $A_0$, we let $H_{\mathrm{gen}}$ be the stabilizer of a very general point of $Y$, and $\H$ the identity component of its $\Q$-Zariski closure.  Note that every point of $Y$---in particular $[W]$---is stabilized by $\H(\C)$.  Moreover, since $\Gamma_0$ sends a very general point to a very general point, it follows that $H_{\mathrm{gen}}$ is normalized by $\Gamma_0$, hence $\H\subset \G$ is normal.
\begin{claim}
$\H=\G$.
\end{claim}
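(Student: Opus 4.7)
The plan is to argue by contradiction: assuming $\H \subsetneq \G$, I would use Pila--Wilkie point counting to produce an algebraic enlargement $\tilde W \supsetneq W$ with a component $\tilde U \supsetneq U$ of $\tilde W \cap \Sigma_\cV$ whose data strictly decreases the induction triple, contradicting the inductive hypothesis.

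First I would show that $\H(\C)$ stabilizes every point of the Zariski closure $Y^\Zar \subset M$, and in particular that $\H(\C)\cdot W = W$. Indeed, $H_\mathrm{gen}$ stabilizes a very general point of $Y$, and since $H_\mathrm{gen}$ is normalized by $\Gamma_0$ (which is Zariski dense in $\G$) it stabilizes a Zariski dense subset of $Y$; as the stabilizer of a point is a Zariski closed condition, the $\Q$-Zariski closure of $H_\mathrm{gen}$, and hence $\H$, stabilizes $Y^\Zar$. Under the assumption $\H \subsetneq \G$, the image of $\Gamma_0$ in the positive-dimensional group $(\G/\H)(\Q)$ is Zariski dense, so a standard lattice point growth estimate in matrix coordinates on $\G \subset \GL_m$ produces $\gg T^\delta$ elements $\gamma \in \Gamma_0$ of matrix height at most $T$ giving pairwise distinct translates $\gamma\cdot[W] \in Y$, for some $\delta > 0$.

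Next, applying Theorem \ref{pointcount2} to the definable family of pairs $(\gamma\cdot[W],p)$ inside the fundamental set $\cG_\cV$ (with $\gamma$ parametrized by its matrix coordinates), these rational points must lie in $T^{o(1)}$ semi-algebraic fibers, so some fiber supports a positive-dimensional semi-algebraic family of such translates. By Theorem \ref{definechow}, this yields a positive-dimensional algebraic subvariety $Z \subset M$ with $[W]\in Z \subset Y^\Zar$. Let $\tilde W \subset \Omega_\cV$ denote the image of the universal family over $Z$: it is algebraic, strictly contains $W$, and has $\dim \tilde W = \dim W + \dim Z$. Using the defining property of $A_0$ (each $[W']\in Z$ carries a component of $W'^\an \cap \Sigma_\cV$ of dimension at least $\dim U$ through a parameter point), the component $\tilde U \supset U$ of $\tilde W \cap \Sigma_\cV$ satisfies $\dim \tilde U \geq \dim U + \dim Z > \dim U$, while $\codim_{\tilde W}\tilde U \leq \codim_W U < \dim \G$. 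The induction triple $(\dim S, \dim\tilde W - \dim \tilde U, -\dim\tilde U)$ is therefore lexicographically strictly smaller than $(\dim S, \dim W - \dim U, -\dim U)$, and the inductive hypothesis forces $\pi(\tilde U) \supset \pi(U)$ into a proper weak Mumford--Tate subvariety of $S$, contradicting the reduction.

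The main obstacle is executing the Pila--Wilkie step cleanly: namely, packaging the $\Gamma_0$-translates into a single definable family in which the height of a rational point corresponds to the matrix height of $\gamma \in \GL_m(\Q)$, so that the output of Theorem \ref{pointcount2} combined with Theorem \ref{definechow} yields an algebraic $Z \subset M$ through $[W]$ rather than an unrelated semi-algebraic object. A secondary subtlety is the dimension estimate $\dim \tilde U \geq \dim U + \dim Z$, which requires combining the $\dim U$-intersection condition defining $A_0$ with the fact that $\tilde W$ is swept out by the $Z$-family, so that the fiberwise $U$-components glue to a single component of the correct dimension.
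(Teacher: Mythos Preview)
Your overall strategy---Pila--Wilkie plus induction on the triple---matches the paper's, but there is a genuine gap at the ``standard lattice point growth'' step.  What you need is not merely that $\Gamma_0$ has $\gg T^\delta$ elements of height $\leq T$ in distinct $\H$-cosets, but that polynomially many such $\gamma$ lie in the \emph{definable} set of elements for which $\gamma W$ still meets the fundamental set $\cF_\cV$ in a component of dimension $\dim U$.  This is far from standard: translates $\gamma U$ can escape $\cF_\cV$ entirely, and controlling how many return is exactly the delicate content of Proposition~\ref{chiuprop} (Chiu's height bound), which relies on a detailed structural analysis of mixed period domains.  The paper invokes this proposition explicitly and, crucially, it comes with a dichotomy: either $\H(\Z)\backslash J$ has polynomially many points (your situation), \emph{or} it is finite.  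You have not addressed the second branch at all; in that branch Pila--Wilkie gives nothing, and the paper instead argues that $\pi(U)$ is definable, hence algebraic by Theorem~\ref{definechow}, and then finishes by comparing monodromy groups directly.

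There are two secondary issues.  First, Pila--Wilkie produces a real semi-algebraic curve $C$ in the $\gamma$-coordinates, i.e.\ in $\G(\R)$; invoking Theorem~\ref{definechow} to turn this into a complex-algebraic $Z\subset M$ is not legitimate, since $C\cdot[W]$ is only a real-analytic arc.  The paper works with $C\subset\G(\R)$ directly and passes to the algebraic set $C^{\Zar}W\subset\Omega_\cV$.  Second, your dimension estimate $\dim\tilde U\geq\dim U+\dim Z$ does not follow just from each member of the family having a $\dim U$-dimensional intersection with $\Sigma_\cV$: those components need not glue.  The paper handles this by a further case split on whether $c^{-1}U$ varies along $C$; if it does, one enlarges $W$ to $C^{\Zar}W$, and if it does not, one instead \emph{shrinks} $W$ to $W\cap c^{-1}W$---either way the triple strictly drops.
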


\begin{proof}

Let $Z=S^\an\times_{\G(\Z)\backslash\cD}\cD\subset S^\an\times\check\cD^\an$ where $\mathcal{D}$ is the relevant weak Mumford--Tate domain and $\check{\mathcal{D}}$ its dual.  Let $F\subset Z$ be a definable fundamental set for the action of $\G(\Z)$.  We have the following result of Chiu \cite{chiu}:
\begin{prop}\label{chiuprop}
Let $\H\subset\G$ be a normal $\Q$-subgroup and $K\subset Z$ a closed irreducible complex analytic subvariety which is stabilized by $\H(\Z)$ and for which $K\cap\gamma F$ is definable for any $\gamma\in \G(\Z)$.  Let 
\[J=\{\gamma\in\G(\Z)\mid K\cap \gamma F\neq\varnothing\}\]
and note that $\H(\Z)$ acts on $J$.  Then either $\H(\Z)\backslash J$ is finite or has polynomially many integer points.
\end{prop}

\begin{proof}
This is proven in \cite{chiu}. The case that $\H(\Z)\backslash J$ has polynomially many points corresponds to cases (1) and (2) in the trichotomy at the end of section 7 in \cite{chiu}, and is proven in sections 8 and 9 respectively. The case that $\H(\Z)\backslash J$ is finite is case (3).

Chiu proves it for the specific set $U$ in his notation, but the proof works verbatim for an arbitrary $K$ as in the statement of Proposition \ref{chiuprop}.

\end{proof}
We apply this proposition to $\Sigma_\cV$, which is identified with a component of $Z$ as in \S\ref{perioddom}, using $K=W^\an\cap \Sigma_\cV$ and $F=\cF_\cV$.  Consider  
\[I=\{g\in\G(\R)\mid \dim(g^{-1}W^\an\cap  \cF_\cV)=\dim U\}.\] Note that the $J$ in Proposition \ref{chiuprop} is a subset of $I(\Z)$. Moreover, $I$ is definable. Thus, by Theorem \ref{pointcount}, either

\begin{enumerate}

    \item $I$ contains a semi-algebraic curve $C$ with non-constant image in $\H(\R)\backslash I$, or
     \item $\H(\Z)\backslash J$ is finite.
\end{enumerate}

Suppose first that we are in case (1). Then $I$ contains a semialgebraic curve $C$. We claim that $W$ is not stabilized by $C$. If it were, then it would be stabilized by $C\cdot C^{-1}$, which contains an integer point not in $\H(\Z)$ (in fact we can arrange it to contain arbitrarily many by the conclusion of the stronger \ref{pointcount2}). This is a contradiction by the definition of $\H$. 

Therefore $c^{-1}W$ varies with $c\in C$. If $c^{-1}U$ does not vary with $c\in C$ then we may replace $W$ with $W\cap c^{-1}W$ for a generic element $c$ and obtain a lexicographically smaller counterexample. Else, if $c^{-1}U$ does vary with $c\in C$, we may replace $W$ with $C^{\Zar}W$ and obtain a lexicographically smaller counterexample.

Thus, we may assume we are in case (2). In this case, it follows that the image $\pi(U)$ of $U$ in $S$ is definable, hence algebraic by Theorem \ref{definechow}. Since the monodromy of $\pi(U)$ and $U$ are the same, if we have $\H\neq\G$ then $\pi(U)$ would be contained in a proper weakly special subvariety, which contradicts the assumption on $U$. 
\end{proof}

We may therefore suppose $W$ is $\G$-invariant, but then it is obvious that $\codim_WU\geq \dim \G$, and this contradiction proves the theorem.


\section{Nori motives over complex functions fields}\label{secNori}

\subsection{Outline}

In this section we prove Theorem \ref{Andreperiodintro}, whose statement is formulated in terms of functions fields of complex algebraic varieties.  Nori's category of motives is defined only for subfields of $\C$, and due to its reliance on a Betti realization functor the generalization of his construction to complex function fields requires a little care.  In \S\ref{noriclass} we recall the original construction of Nori, in \S\ref{conjclass} we precisely state the classical Grothendieck period conjecture and its generalization due to Andr\'e.  In \S\ref{norinew} we make the necessary modifications to Nori's construction and in \S\ref{noriGalois} we relate the relative motivic Galois group to the algebraic monodromy.  In \S\ref{noritorsor} we relate the torsor of comparisons between Betti and de Rham realizations to our period torsor $\Omega_\V$.  In \S\ref{noriKZ} we show has this perspective gives a simple perspective on the geometric Kontsevich--Zagier conjecture.  Finally in \S\ref{conjnew} we formulate and prove the precise version of Theorem \ref{Andreperiodintro}.  The reader who is willing to assume a reasonable category of motives over a complex function field together with the statement of Theorem \ref{pi1} can skip directly to the proof.

\subsection{Nori motives}\label{noriclass}
In this section we briefly recall Nori motives, which will provide for us a Tannakian category of motives in both the classical and functional setting, and therefore a motivic Galois group.  Ayoub \cite{ayoubkz} (see also \cite{ayoubgaloisii}) takes a slightly different approach, using Voevodsky's theory to define such a group directly, but they are canonically the same \cite{isomgalois}.   Essentially, for any subfield $k\subset\C$, the category of Nori $k$-motives will be the abelian subcategory of $\Q\mbox{-mod}$ generated by singular cohomology groups $H^i((X_\C)^\an,\Q)$ of $k$-varieties $X$ together with all morphisms that can be constructed naturally from maps of $k$-varieties.  The main reference is \cite{huberms}.

By a diagram $D$ we mean a directed graph\footnote{with possibly infinitely many vertices and edges.} with the obvious notion of morphism.  Note that for any category $\mathcal{C}$ there is a natural underlying diagram, and for every functor $\mathcal{C}\to\mathcal{C}'$ an underlying morphism of diagrams.  A representation $F:D\to\mathcal{C}$ of a diagram $D$ in a category $\mathcal{C}$ is a morphism on the level of diagrams.  Concretely, $F$ assigns an object of $\mathcal{C}$ to each vertex of $D$, and a morphism of $\mathcal{C}$ to each edge of $D$ with the obvious compatibility on the source and target.

Let $k$ be a field with an embedding $\iota:k\to \C$.  The diagram $\Pairs^\eff(k,\iota)$ has as vertices triples $(X,Y,i)$ with $X$ an algebraic variety over $k$, $Y\subset X$ a closed subvariety (defined over $k$), and $i\in \Z$.  The edges of $\Pairs^\eff$ consist of
\begin{itemize}
    \item for each morphism $f:X\to X'$ with $f(Y)\subset Y'$ and integer $i$ there is an edge $f^*:(X',Y',i)\to (X,Y,i)$;
    \item for each chain $X\supset Y\supset Z$ of varieties (the inclusions being of closed subvarieties) and each integer $i$ there is an edge $\partial:(Y,Z,i)\to(X,Y,i+1)$. 
\end{itemize}
Betti cohomology $(X,Y,i)\mapsto H^i((X_\C)^\an,(Y_\C)^\an,\Q)$ defines a natural representation 
\[\mathrm{Betti}_\iota:\Pairs^\eff(k)\to \Q\mbox{-mod}\]
where the edge $f^*$ is sent to the pullback via $f$ and $\partial$ is sent to the coboundary map in the long exact sequence of the triple. 
\begin{thm}[Nori, see {\cite[Theorems 7.1.13 and 9.1.5]{huberms}}]\label{norimot}Let $k$ be a field with an embedding $\iota:k\to \C$.
\begin{enumerate}
    \item There is a $\Q$-linear abelian category $\mathcal{MM}^\eff_\mathrm{Nori}(k,\iota)$
    together with a representation $\mathcal{H}_\iota:\Pairs^\eff(k)\to \mathcal{MM}^\eff_\mathrm{Nori}(k,\iota)$ and a faithful exact $\Q$-linear functor $H_\iota:\mathcal{MM}^\eff_\mathrm{Nori}(k,\iota)\to \Q\text{-}\mathrm{mod}$ which is uniquely determined by the property that given
    \begin{itemize}
        \item a $\Q$-linear abelian category $\mathcal{A}$
        \item a representation $\mathcal{F}:\Pairs^\eff(k)\to\mathcal{A}$
        \item a faithful exact $\Q$-linear functor $F:\mathcal{A}\to \Q\mbox{-mod}$ such that the solid part of the diagram below commutes (on the level of diagrams)
        \[\begin{tikzcd}
        &\mathcal{MM}^\eff_\mathrm{Nori}(k,\iota)\ar[rd,"H_\iota"]\ar[dd,dashed,"\Phi"]&\\
\Pairs^\eff(k)\ar[ru,"\mathcal{H}_\iota"]\ar[rd,swap,"\mathcal{F}"]&&\Q\mbox{-mod}\\
&\mathcal{A}\ar[ru,swap,"F"]&
        \end{tikzcd}\]
        \end{itemize}
        there exists a unique faithful exact $\Q$-linear functor $\Phi:\mathcal{MM}_{\mathrm{Nori}}^\eff(k,\iota)\to\mathcal{A}$  making the diagram commute (on the level of diagrams).
    \item The category $\mathcal{MM}_\mathrm{Nori}^\eff(k,\iota)$ has a natural commutative tensor product with unit such that $H_B$ is a tensor functor.
        \item The category $\mathcal{MM}_\mathrm{Nori}(k,\iota)$ obtained from $\mathcal{MM}_\mathrm{Nori}^\eff(k,\iota)$ by inverting $\mathcal{H}^1_\iota(\G_m,\{1\})$ is a rigid tensor category with fiber functor $H_\iota$.  Here we denote $\mathcal{H}_\iota^i(X,Y):=\mathcal{H}_\iota(X,Y,i)$.
        \item $\mathcal{MM}_\mathrm{Nori}(k,\iota)$ with $H_\iota$ as its fiber functor is naturally equivalent to the category of representations of a pro-algebraic $\Q$-group $\bfG_\mathrm{mot}(k,\iota)$ with its natural fiber functor.
\end{enumerate}
\end{thm}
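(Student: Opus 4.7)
The plan is to follow Nori's construction as developed in \cite{huberms}, realizing $\mathcal{MM}^\eff_\mathrm{Nori}(k, \iota)$ via a general abelian envelope for diagram representations and then upgrading the structure incrementally until one obtains a neutral Tannakian category.

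For part (1) I would invoke Nori's universal construction for diagrams: given a representation $T \colon D \to R\mbox{-mod}$ valued in finitely generated modules over a Noetherian ring (here $R = \Q$), there is a universal $R$-linear abelian category $\mathcal{C}(D, T)$ equipped with a faithful exact functor to $R\mbox{-mod}$ through which $T$ factors. Concretely, for each finite subdiagram $F \subset D$ one forms the endomorphism ring $E(F) = \End(T|_F)$ acting on $\bigoplus_{v \in F} T(v)$, and $\mathcal{C}(D, T)$ is the filtered colimit over $F$ of finite-dimensional $E(F)$-modules, or equivalently, of finite-dimensional comodules over the coalgebra $\colim_F E(F)^\vee$. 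Applied with $D = \Pairs^\eff(k)$ and $T = \mathrm{Betti}_\iota$ this yields (1). For (2) I would invoke Nori's tensor criterion: a graded multiplicative structure on $D$ compatible with $T$ induces a tensor product on $\mathcal{C}(D, T)$ making the factorization into a tensor functor. The relevant structure on $\Pairs^\eff(k)$ is
\[(X, Y, i) \boxtimes (X', Y', j) = \bigl(X \times X',\; X \times Y' \cup Y \times X',\; i + j\bigr),\]
corresponding to the K\"unneth product on Betti cohomology; compatibility with the boundary edges is a standard topological check.

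For part (3) I would verify that duals exist after inverting the Tate motive $\mathcal{L} := \mathcal{H}^1_\iota(\G_m, \{1\})$. Via Nori's basic lemma every effective motive is a subquotient of motives of good pairs $(X, Y)$ with $X$ smooth affine, which are in turn resolved by smooth projective pairs. For smooth projective $X$ of dimension $d$ with $Y \subset X$ a closed subvariety, Poincar\'e--Lefschetz duality supplies a canonical identification
\[\mathcal{H}^i_\iota(X, Y)^\vee \cong \mathcal{H}^{2d-i}_\iota(X \setminus Y) \otimes \mathcal{L}^{\otimes(-d)},\]
from which rigidity of the localized category follows. Once (2) and (3) are in place, $\mathcal{MM}_\mathrm{Nori}(k, \iota)$ is a neutral Tannakian $\Q$-linear category with fiber functor $H_\iota$, and Deligne's Tannakian reconstruction gives (4) by setting $\bfG_\mot(k, \iota) := \underline{\Aut}^\otimes(H_\iota)$.

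The main obstacle will be part (3): one must verify that inverting precisely $\mathcal{L}$ yields a rigid tensor category, which rests on Nori's basic lemma together with a careful analysis of how Poincar\'e--Lefschetz duality interacts with the diagrammatic construction. Parts (1), (2), and (4) are then essentially formal once the universal constructions are in place.
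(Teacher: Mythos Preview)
The paper does not supply a proof of this theorem: it is stated as a background result attributed to Nori with a citation to \cite[Theorems 7.1.13 and 9.1.5]{huberms}, and the text proceeds directly to use it. Your outline is a faithful sketch of the argument in that reference---the diagram-category construction via endomorphism coalgebras of finite subdiagrams, the tensor structure from the K\"unneth product of pairs, rigidity after inverting the Lefschetz object via the basic lemma and Poincar\'e--Lefschetz duality, and Tannakian reconstruction---so there is nothing to compare against and no gap to flag.
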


Note in particular that we have $H_\iota\circ\mathcal{H}_\iota=\mathrm{Betti}_\iota$.  We refer to $\NoriMot^\eff(k,\iota)$ (resp $\NoriMot(k,\iota)$) as the category of effective Nori $(k,\iota)$-motives (resp. Nori $(k,\iota)$-motives).  

\begin{defn}
For a Nori $(k,\iota)$-motive $M$ we define $\bfG_\mot(M,\iota)$ to be the image of the natural map $\bfG_\mot(k,\iota)\to\bfGL(H(M))$.  It is an algebraic $\Q$-group. 
\end{defn}

\subsection{Classical period conjectures}\label{conjclass}It will be useful to have a category of pairs of vector spaces equipped with a comparison over a fixed field extension.  Let $\iota:k\to L$ be an embedding of characteristic 0 fields.  Define $(k,\Q)_\iota\mbox{-mod}$ to be the category of triples $(U,V,\phi)$ of a $k$-vector space $U$ a $\Q$-vector space $V$, and an isomorphism $\phi:U\otimes_k L\to V\otimes_\Q L$ with the obvious notion of morphism.

For a field $k$ and an embedding $\iota:k\to\C$ we have a natural representation
\[\Pairs^\eff(k)\to (k,\Q)_\iota\mbox{-mod}\]
given by sending $(X,Y,i)$ to $(H_{DR}^i(X,Y),H^i(X,Y,\Q),\phi_{X,Y,i})$ where $\phi_{X,Y,i}$ is the natural comparison given by integration.  By the universal property this extends to a functor
\[\NoriMot^\eff(k,\iota)\to (k,\Q)_\iota\mbox{-mod},\;\;\;\;M\mapsto (H_{DR}(M),H_\iota(M),\phi_M)\]
which in turn extends to a functor 
\[\NoriMot(k,\iota)\to (k,\Q)_\iota\mbox{-mod},\;\;\;\;M\mapsto (H_{DR}(M),H_\iota(M),\phi_M).\]
\begin{defn}
For $\iota:k\to \C$ a field embedding and $M$ a Nori $(k,\iota)$-motive we define 
\[k(\mathrm{periods}_\iota\;\mathrm{of}\;M)\subset\C\]
to be the field of definition of the comparison $\phi_M$.
\end{defn}
Concretely $k(\mathrm{periods}_\iota\;\mathrm{of}\;M)$ is obtained by adjoining the periods of $k$-rational de Rham classes of $M$ to $k$.
\begin{conj}[Grothendieck period conjecture]
Let $\iota:k\to\C$ be an embedding of a number field and $M$ a Nori $(k,\iota)$-motive.  Then
\[\trdeg_\Q k(\mathrm{periods}_\iota\;\mathrm{of}\;M)=\dim\bfG_\mot(M,\iota).\]
\end{conj}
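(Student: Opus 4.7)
This statement is the classical Grothendieck period conjecture, a central open problem in transcendence theory; no proof is known in any genuinely nontrivial case. Any honest proposal can only establish the upper bound by formal Tannakian means and flag the reverse inequality as the true content.

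For the upper bound $\trdeg_\Q k(\mathrm{periods}_\iota\;\mathrm{of}\;M) \leq \dim\bfG_\mot(M,\iota)$ I would proceed as follows. Let $\langle M\rangle^\otimes \subset \NoriMot(k,\iota)$ be the Tannakian subcategory generated by $M$, and form the affine $k$-scheme
\[\mathbb{P}_M := \underline{\mathrm{Isom}}^\otimes\bigl(H_{DR}|_{\langle M\rangle^\otimes},\, H_\iota|_{\langle M\rangle^\otimes}\bigr)\]
of tensor isomorphisms between the two restricted fiber functors. Standard Tannakian formalism shows $\mathbb{P}_M\otimes_k\C$ is a torsor under $\bfG_\mot(M,\iota)\otimes_\Q \C$, so $\dim_k \mathbb{P}_M = \dim\bfG_\mot(M,\iota)$. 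The comparison $\phi_M$ defines a $\C$-point of $\mathbb{P}_M$; its coordinates in any affine chart (with respect to a $k$-basis of $H_{DR}(M)$ and a $\Q$-basis of $H_\iota(M)$) generate precisely the field $k(\mathrm{periods}_\iota\;\mathrm{of}\;M)$ over $k$. Since the coordinates of any $\C$-point of a finite-type $k$-scheme have transcendence degree over $k$ bounded by the dimension of that scheme---and since $k/\Q$ is algebraic---the bound follows.

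The reverse inequality is the actual content: it says $\phi_M$ is Zariski-generic in $\mathbb{P}_M$ over $k$. In the functional analog (Theorem \ref{Andreperiodintro}) this is exactly the output of Theorem \ref{main}, combined with the identification of the motivic Galois group with the Zariski closure of topological monodromy (Theorem \ref{exactGalois}), which supplies the Zariski-dense orbit on $\mathbb{P}_M$ needed to feed into an Ax--Schanuel-type argument; in fact the Zariski density of $\Sigma_\cV$ in $\Omega_\cV$ is part of Lemma \ref{ZZar}. In the arithmetic setting of a number field, none of these ingredients have known analogs: there is no arithmetic Ax--Schanuel, no arithmetic substitute for the Bakker--Mullane definable Chow theorem, and no known arithmetic object whose Zariski closure realizes the full motivic Galois group. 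The main obstacle is thus precisely the absence of such a Zariski-dense arithmetic orbit. This is why the paper does not attempt the classical conjecture, devoting its proofs entirely to the function-field version where the topological monodromy of an admissible variation of mixed Hodge structures plays the decisive role.
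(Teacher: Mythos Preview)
Your assessment is correct and matches the paper's treatment exactly: the statement is recorded there as a \emph{conjecture}, not a theorem, and the paper offers no proof---indeed the introduction calls it ``currently very, very open.'' Your Tannakian argument for the upper bound is the standard one and is implicit in the paper's discussion of the comparison torsor $\mathcal{X}(M,\gamma)$ in \S\ref{noritorsor}; your identification of the lower bound as the missing arithmetic analog of the monodromy/Ax--Schanuel input (Theorem \ref{exactGalois}, Lemma \ref{ZZar}, Theorem \ref{main}) is precisely the point the paper makes in passing from the classical to the functional setting.
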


One downside of the Grothendieck period conjecture is that it does not imply the other major conjecture about transcendence: the Schanuel conjecture about exponentials. Andr\'e proposed the following strengthening to address this:

\begin{conj}[Andr\'e--Grothendieck period conjecture]  Let $\iota:k\to\C$ be any field embedding and $M$ a Nori $(k,\iota)$-motive.  Then
\[\trdeg_\Q k(\mathrm{periods}_\iota\;\mathrm{of}\;M)\geq\bfG_\mot(M,\iota).\]
\end{conj}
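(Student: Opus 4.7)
The plan is to reduce the arithmetic Andr\'e--Grothendieck conjecture to the geometric Ax--Schanuel of Theorem \ref{main} by a spreading-out argument followed by a Pila--Zannier-type specialization step. The comparison $\phi_M$ defines a canonical $\C$-point $p_M$ of the $\bfG_\mot(M,\iota)$-torsor $P_M := \underline{\mathrm{Isom}}^\otimes(H_{DR}(M), H_\iota(M))$ over $\Spec k$, and for any closed embedding $P_M \hookrightarrow \bA^N_k$ the coordinates of $p_M$ generate $k(\mathrm{periods}_\iota\;\mathrm{of}\;M)$ over $k$; consequently the conjectured inequality is equivalent to the $\Q$-Zariski closure of $p_M$ having dimension at least $\dim \bfG_\mot(M,\iota)$ inside the ambient $\Q$-scheme. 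Since every Nori motive descends to a finitely generated subfield of $\C$, I would reduce to $k = \Q(S)$ for a smooth geometrically irreducible $\Q$-variety $S$, with $\iota$ corresponding to a point $s_0 \in S(\C)$. Spreading $M$ out to a constructible motive $\mathcal{M}$ over a suitable open of $S$ yields an admissible graded-polarizable VMHS $\cV$ whose geometric period torsor $\Omega_\cV$ has fiber over $s_0$ canonically identified with $P_M \otimes_k \C$, and the arithmetic period point $p_M$ equals $\sigma_\cV$ evaluated at a lift of $s_0$.

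Applying Theorem \ref{main} to the family $\mathcal{M}/\C(S)$ then controls the functional transcendence: any $\Q$-algebraic subvariety $W \subset \Omega_\cV$ meeting $\Sigma_\cV$ in codimension less than $\dim \G$ has its projection to $S$ contained in a proper weak Mumford--Tate subvariety. For the specialization step, any algebraic relation over $\Q$ among the coordinates of $p_M$ produces a $\Q$-algebraic subvariety $W_0 \subset P_M$ containing $p_M$, and spreading $W_0$ over $S$ gives a $\Q$-algebraic $W \subset \Omega_\cV$ meeting $\Sigma_\cV$ at $\sigma_\cV(s_0)$ in the predicted low codimension. Using the expected compatibility of motivic Galois groups under spreading out, namely $\dim \bfG_\mot(M,\iota) = \dim \bfG_\mot(\mathcal{M}/\C) + \trdeg_\Q k$ for Hodge-generic $s_0$, the geometric Ax--Schanuel then forces $s_0$ into a proper weak Mumford--Tate subvariety $T \subsetneq S$ whenever the arithmetic transcendence degree falls below $\dim \bfG_\mot(M,\iota)$. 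Contraposing, one obtains the desired lower bound.

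The hard part is showing that $s_0$ is not contained in \emph{any} such $T$, i.e., ensuring that the specific transcendental point $\iota$ is sufficiently generic. Nothing in Hodge theory or o-minimality rules out $s_0$ lying on the countably many $\Q$-algebraic proper weak Mumford--Tate subvarieties of $S$; indeed, for $k = \bar\Q$ the base $S$ is a point, and the conjecture reduces to the classical Grothendieck period conjecture. A complete proof along these lines thus requires a new arithmetic input---plausibly an arithmetic Pila--Wilkie-type statement controlling algebraic points on period torsors, or height bounds combined with motivic Galois theory---and identifying this input is the central obstacle. The above strategy, however, cleanly isolates the arithmetic content: the geometric and Hodge-theoretic content is fully captured by Theorem \ref{main}, and everything further is an arithmetic density/non-specialization statement for a single transcendental fiber of a period torsor.
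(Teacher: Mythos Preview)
The paper does not prove this statement. It is stated as an open conjecture (the classical Andr\'e--Grothendieck period conjecture, \S\ref{conjclass}), and the paper is explicit that already the special case of number fields (Conjecture \ref{Grothperiod}) is ``very, very open.'' What the paper actually proves is the \emph{geometric} analogue, Theorem \ref{Andreperiod}, in which the extension $\Q\subset\C$ is replaced by $k\subset k^\an_\gamma$ for $k$ a complex function field; there the base field is itself a function field over $\C$, so one never needs to control an individual transcendental specialization.

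Your diagnosis of the obstruction is accurate. The spreading-out step you describe does produce a family to which Theorem \ref{main} applies, but the conclusion of Theorem \ref{main} only constrains the \emph{Zariski closure} of the projection of $U$ to $S$, and a single point $s_0\in S(\C)$ has zero-dimensional Zariski closure regardless. As you note, when $k$ is algebraic over $\Q$ the base $S$ degenerates to a point and the statement is purely arithmetic, so no amount of functional transcendence can help. Your proposal is therefore not a proof but a correct explanation of why the geometric theorem does not imply the arithmetic conjecture; there is no flaw in your reasoning to correct, because the gap lies in the mathematics, not in your argument.

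One small caution: the dimension formula $\dim\bfG_\mot(M,\iota)=\dim\bfG_\mot(\mathcal{M}/\C)+\trdeg_\Q k$ that you invoke for ``Hodge-generic $s_0$'' is not something the paper establishes in the arithmetic setting. The exact sequence of Theorem \ref{exactGalois} is for the functional setup over $\C$; over $\Q$ one only knows (via Theorem \ref{MTfacts}) that the algebraic monodromy is normal in the derived Mumford--Tate group, and relating the latter to $\bfG_\mot(M,\iota)$ is again conjectural. So even the bookkeeping in your reduction is not unconditional---but this is secondary to the fundamental obstruction you already isolated.
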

\begin{eg}\label{egAS}
Let $\alpha_1,\dots,\alpha_n\in\C^*$ be multiplicatively independent, and consider $X=\G_m$ and $ Y=\{1, \alpha_1,\dots,\alpha_n\}\subset\G_m$ over $k=\Q(\alpha_1,\dots,\alpha_n)$.  Then $H^1_{DR}(X,Y)$ is spanned by $\frac{dx}{x}$, and the differences $[1]^\vee-[\alpha_i]^\vee$, whereas $H_1(X,Y,\Q)$ is spanned by paths between $1$ and the different 
$\alpha_i$, and  the loop around $0$. Thus the integrals are all integers, as well as $2\pi i, \log\alpha_1,\ldots,\log\alpha_n$.
Hence in this case, Andr\'e's conjecture says that
$$\trdeg_\Q(2\pi i, \alpha_1,\ldots,\alpha_n,\log \alpha_1,\ldots, \log\alpha_n)\geq \dim \bfG_{\mot}(\mathcal{H}^1_\iota(X,Y)).$$

On the other hand, Andr\'e shows \cite{Andre1motives} that $\bfG_{\mot}(\mathcal{H}^1_\iota(X,Y))$ is the same as the Mumford--Tate group of the mixed Hodge structure $H^1(X,Y)$---and this is easily computed to be an extension of $\G_m$ by $n$ copies of $\G_a$. Thus Andr\'e's conjecture says that 
$$\trdeg_\Q(2\pi i, \alpha_1,\ldots,\alpha_n,\log \alpha_1,\dots, \log\alpha_n)\geq n+1 $$ and therefore that
$$\trdeg_\Q(\alpha_1,\ldots,\alpha_n,\log \alpha_1,\dots, \log\alpha_n)\geq n $$ 
which is precisely the statement of Schanuel's conjecture.

\end{eg}

\subsection{Nori motives in the functional setting}\label{norinew}  In the functional setting we would like to replace $k$ with the function field of a complex algebraic variety, in which case the Betti realization of a Nori motive $M$ defined over $k$ should be the Betti cohomology of a generic fiber of $M$ once we spread $M$ out over a model $S$ of $k$.  In this section we make these ideas precise.

Throughout we often take $\iota_0:k_0\to \C$ to be a field embedding and $k_0\subset k$ a finitely generated extension.  By analytification we always mean analytification as $k_0$-varieties, unless otherwise specified.

\begin{defn}
An \emph{arc point} $\gamma$ of a topological space $\cS$ is an equivalence class of  continuous paths $\gamma:(0,a)\to \cS$ for $a>0$, where we say two arc points $\gamma,\gamma'$ are equivalent if they agree on some interval $(0,\delta)$ for $\delta>0$. 
 
\end{defn}

Note that classical points may be thought of as arc points via the constant maps. 

\begin{lemma}\label{stablearc}

Let $S$ be a model of $k$ over $k_0$ and let $\gamma:(0,a)\ra S^{\an}$ be an arc such that 

\begin{enumerate}
    \item $\gamma$ extends real analytically over $(-\epsilon,a+\epsilon)$ for some $\epsilon>0$
    \item $\gamma $ is not contained in any proper $k_0$-algebraic subvariety of $S^{\an}$.
  \end{enumerate}  
Then $\gamma$ defines an arc point of every model $S'$ of $k$ compatibly with respect to morphisms of models.

\end{lemma}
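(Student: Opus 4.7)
The plan is to reduce the statement to the case where we have a single birational map of models, and use real-analyticity of $\gamma$ together with hypothesis (2) to show that the arc enters the domain of definition for all sufficiently small positive parameter values. The key observation is a dichotomy: a proper real-analytic subset of a one-dimensional real-analytic space is discrete.

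First, given any model $S'$ of $k$ over $k_0$, the identity on the function field $k$ determines a unique birational isomorphism $f: S \dashrightarrow S'$ over $k_0$ (uniqueness follows from separatedness of $S'$) with domain of definition an open $k_0$-subvariety $U \subset S$, whose complement $Z := S \setminus U$ is a proper $k_0$-subvariety of $S$. The analytification $Z^{\an}$ is a closed complex analytic subvariety of $S^{\an}$, locally cut out by vanishing of holomorphic functions. Pulling back along the real-analytic extension $\gamma: (-\epsilon, a+\epsilon) \to S^{\an}$, the preimage $\gamma^{-1}(Z^{\an})$ is locally the vanishing locus of a collection of real-analytic functions on an open interval. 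By hypothesis (2), $\gamma$ is not contained in $Z$, so this preimage is a proper subset of $(-\epsilon, a+\epsilon)$; the identity theorem for real-analytic functions then forces it to be discrete. In particular, there exists $\delta > 0$ with $\gamma((0, \delta)) \subset U^{\an}$, and the composition $\gamma' := f \circ \gamma|_{(0,\delta)} : (0,\delta) \to (S')^{\an}$ is a continuous path defining the desired arc point of $S'$.

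Next, the arc point is independent of the choice of $\delta$, since restricting to a smaller interval $(0,\delta')$ yields an equivalent path by the very definition of arc point. For compatibility with a morphism $g: S' \to S''$ of models over $k_0$, the composition $g \circ f : S \dashrightarrow S''$ is the unique birational isomorphism induced by the identity on $k$, and its domain of definition contains $U$. Running the construction for $S''$ with this composition, we obtain $(g \circ f) \circ \gamma|_{(0,\delta)} = g \circ \gamma'|_{(0,\delta)}$, which establishes the claimed compatibility.

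I do not foresee a substantive obstacle; the proof is essentially bookkeeping around the real-analytic dichotomy. The one point requiring mild care is ensuring that the indeterminacy locus of $S \dashrightarrow S'$ is a genuine $k_0$-subvariety (so that hypothesis (2) applies), which follows from the fact that the birational map is defined over $k_0$ and $S, S'$ are both $k_0$-varieties.
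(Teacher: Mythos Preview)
Your proof is correct and follows essentially the same approach as the paper: both reduce to showing that $\gamma^{-1}$ of a proper closed $k_0$-subvariety is discrete (hence finite on compact subintervals) via real-analyticity and hypothesis (2), so that $\gamma|_{(0,\delta)}$ lands in any prescribed nonempty open subscheme for small $\delta$. The paper states this for an arbitrary open $V\subset S$ and then observes that any two models share a common open, whereas you take $V=U$ to be the domain of the birational map $S\dashrightarrow S'$ directly and spell out the compatibility check; the content is the same.
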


\begin{proof}

Let $V\subset S$ be a (nonempty) open subscheme. Then by the two assumptions, the set $\gamma^{-1}(S^{\an}\backslash V^{\an})$ is finite, and therefore for sufficiently small $0<b<a$ the map $\gamma|_{ (0,b)}$ factors through $V$. Thus $\gamma$ defines an arc point of any (nonempty) open subscheme of $S$, and therefore of any other model, since any two models agree on an open set.

\end{proof}
For an arc point of $S^\an$ satisfying the conditions in the lemma, we say that the arc point induced on any other model is \emph{stable}.
\begin{defn}
Let $k_0\subset k$ be fields such that $k$ is finitely generated over $k_0$ and $k_0$ is algebraically closed in $k$. Let $\iota_0:k_0\ra \C$ be a field embedding. We say that $\gamma$ is a $(k_0,\iota_0)$-\emph{arc point} of $k$ if $\gamma$ is a compatible choice of stable arc point of $S^{\an}$ for any model $S$ of $k$ over $k_0$. Note that this is equivalent to a choice of stable arc point on one model by Lemma \ref{stablearc}. Note also that a complex embedding $\iota:k\ra\C$ extending $\iota_0$ naturally gives a $(k_0,\iota_0)$-arc point of $k$.    
\end{defn}

\begin{remark}\label{pusharc}
Suppose we have two triples $(k_0,\iota_0,k)$ and $(\ell_0,\lambda_0,\ell)$ as in the definition with a containment $(k_0,\iota_0,k)\subset (\ell_0,\lambda_0,\ell)$ in the obvious way.  Then any $(\ell_0,\lambda_0)$-arc point of $\ell$ naturally pushes forward to a $(k_0,\iota_0)$-arc point of $k$.
\end{remark}

\begin{defn}\hspace{1in}
\begin{enumerate}
    \item 
Let $\gamma_1,\gamma_2$ be two arc points of a topological space $\cS$.  A \emph{path} from $\gamma_1$ to $\gamma_2$ of $\cS$ is a continuous map $\phi:(0,1)\ra \cS$ such that $\phi$ is equivalent to $\gamma_1$, and $\phi\circ(1-x)$ is equivalent to $\gamma_2$.  A \emph{homotopy} of paths $\phi_1$ and $\phi_2$ from $\gamma_1$ to $\gamma_2$ is an ordinary homotopy $\phi_t$ between $\phi_1$ and $\phi_2$ such that each $\phi_t$ is a path from $\gamma_1$ to $\gamma_2$. 
\item Let $\gamma_1,\gamma_2$ be two $(k_0,\iota_0)$-arc points of $k$.  A \emph{homotopy class of paths} from $\gamma_1$ to $\gamma_2$ is a homotopy class of paths from $\gamma_1$ to $\gamma_2$ in $(k/k_0,\iota_0)^{\an}$.
\end{enumerate}
\end{defn}
In the next section we will need the following notion:
\begin{defn}
Let  $\cS$ be a topological space and $\gamma:(0,a)\ra \cS$ an arc point. For $0<\epsilon<a$ we may identify all the  $\pi_1(\cS,\gamma(\epsilon))$ using the path $\gamma$. We call the equivalence class of all these groups $\pi_1(\cS,\gamma)$. Note that this is (non-canonically) isomorphic to the usual topological fundamental group.  
\end{defn}

\begin{lemma}\label{arcpointconnect}
Suppose that $k/k_0$ is a finitely generated extension of fields such that $k_0$ is algebraically closed in $k$. Then there exists a homotopy class of paths between any two $(k_0,\iota_0)$-arc points of $k$.

\end{lemma}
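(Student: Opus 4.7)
The plan is to reduce the lemma to ordinary path-connectedness of a single complex model. First I would fix an arbitrary model $S$ of $k$ over $k_0$ and argue that $S^{\an}$ (the analytification via $\iota_0$) is path-connected. Since $k/k_0$ is finitely generated and $k_0$ is algebraically closed in $k$, in characteristic zero the extension $k/k_0$ is regular, so $k\otimes_{k_0,\iota_0}\C$ is a domain. This forces $S_\C$ to be integral, its underlying topological space to be connected, and being a connected complex algebraic variety, $S^{\an}$ is also locally path-connected, hence path-connected.

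Next, given $(k_0,\iota_0)$-arc points $\gamma_1,\gamma_2$ of $k$, Lemma \ref{stablearc} lets me view both as stable arc points $\gamma_i:(0,a_i)\to S^{\an}$ of this same model. For sufficiently small $\epsilon_i>0$ the values $p_i:=\gamma_i(\epsilon_i)$ lie in $S^{\an}$, and path-connectedness provides a continuous path $\psi:[0,1]\to S^{\an}$ with $\psi(0)=p_1$ and $\psi(1)=p_2$. I would then define $\phi:(0,1)\to S^{\an}$ by reparametrizing and concatenating $\gamma_1|_{(0,\epsilon_1]}$, then $\psi$, then the reverse of $\gamma_2|_{(0,\epsilon_2]}$. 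By construction, the germ of $\phi$ at $0$ agrees with $\gamma_1$ and the germ of $t\mapsto\phi(1-t)$ at $0$ agrees with $\gamma_2$, so $\phi$ is a path from $\gamma_1$ to $\gamma_2$ in the prescribed sense.

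To produce a homotopy class intrinsic to $k/k_0$ rather than tied to one model, I would observe that any two models of $k/k_0$ share a nonempty Zariski-open common subscheme $U$. Since $U^{\an}$ is open and dense in $S^{\an}$, it is also path-connected, so I can take $\psi$ with image in $U^{\an}$; the stability of $\gamma_1,\gamma_2$ then ensures that $\phi$ restricts compatibly on every model, giving a well-defined homotopy class in the appropriate limit.

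The only real content, and the expected main obstacle, is the path-connectedness step, which is precisely where the hypothesis that $k_0$ is algebraically closed in $k$ enters in an essential way. If that assumption failed, a model could become geometrically reducible upon base change through $\iota_0$, and two arc points landing in different components could not be joined at all.
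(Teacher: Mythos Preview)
Your argument is correct and follows the same route as the paper's proof: use the hypothesis that $k_0$ is algebraically closed in $k$ to conclude that every model is geometrically connected, hence $S^{\an}$ is a connected (and therefore path-connected) complex manifold, and then build a path between the two arc points by concatenation.

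The one place where the paper is slightly sharper is in handling the passage to the pro-system of models. You argue by taking a common open $U$ of two models and moving $\psi$ into $U^{\an}$, but this only treats two models at a time; producing a \emph{compatible} system of homotopy classes over all models still needs a word. The paper instead observes directly that for any Zariski open $U\subset V$, the complement $V^{\an}\setminus U^{\an}$ has real codimension $\geq 2$, so homotopy classes of paths in $U^{\an}$ surject onto those in $V^{\an}$; this immediately shows the transition maps in the inverse system are surjective and hence the limit is nonempty. Your assertion that ``$U^{\an}$ is open and dense in $S^{\an}$, so it is also path-connected'' is true for exactly this codimension-$2$ reason, so you are implicitly using the same fact---it would just be cleaner to state it, and then the compatibility over all models follows at once.
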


\begin{proof}

Note that by the assumption, any model $S$ of $k$ is geometrically connected. Now let $U\subset V$ be connected open sets, and consider $U^{\an}\subset V^{\an}$. Since the real codimension is at least 2, it is follows that homotopy classes of paths in $U^{\an}$ surject onto homotopy classes of paths in $V^{\an}$. Therefore it is sufficient to prove that for any connected manifold $T$, any two arc points $\gamma_1,\gamma_2$ of $T$ have a path between them. But this is trivial since connected manifolds are path-connected.

\end{proof}

For $X$ a $k$-variety, $Y\subset X$ a closed $k$-subvariety, and $\gamma $ a $(k_0,\iota_0)$-arc point of $k$ we define 
\[H^i_\gamma(X,Y):=H^i(X^\an_{\gamma},Y^\an_\gamma,\Q).\]
Here we spread out $X,Y$ over a model $S$ of $k$, shrink $S$ so that $H^i(X^\an_s,Y^\an_s)$ forms a local system, and represent the arc point by $\gamma:(0,a)\to S^\an$.  Then $X^\an_\gamma$ is defined to be the inverse image of $\img\gamma$ in $X^\an$.  This naturally yields a representation
\[\mathrm{Betti}_\gamma:\Pairs^\eff(k)\to \Q\mbox{-mod},\;\;\;\; (X,Y,i)\mapsto H^i_\gamma(X,Y)\]
where the edge $f^*$ is sent to the pullback via $f$ and $\partial$ is sent to the coboundary map in the long exact sequence of the triple. 

\begin{lemma}\label{eqfiberfun}
Let $\phi$ be a homotopy class of paths between two $(k_0,\iota_0)$-arc points $\gamma_1,\gamma_2$ of $k$.  Then $\phi$ gives a natural equivalence $\mathrm{Betti}_{\gamma_1}\cong \mathrm{Betti}_{\gamma_2}$.
\end{lemma}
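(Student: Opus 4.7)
The plan is to construct the natural equivalence objectwise by parallel transport of a locally constant sheaf of relative Betti cohomology, and then verify naturality separately on the two types of edges of $\Pairs^\eff(k)$.

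First, for a single triple $(X,Y,i)$, I would spread $X\supset Y$ out over some model $S$ of $k$ over $k_0$ and shrink $S$ so that $H^i((X_s)^\an,(Y_s)^\an,\Q)$ assembles into a $\Q$-local system $\cL_{X,Y,i}$ on $S^\an$. By the stability of arc points (Lemma \ref{stablearc}) the two arc points $\gamma_1,\gamma_2$ of $k$ restrict to stable arc points of $S$, and after further shrinking of $S$ one can arrange that a representative of $\phi$ in $(k/k_0,\iota_0)^\an$ factors through $S^\an$. I would then \emph{define}
\[\Phi_{(X,Y,i)}:H^i_{\gamma_1}(X,Y)\xrightarrow{\sim}H^i_{\gamma_2}(X,Y)\]
to be the parallel transport of $\cL_{X,Y,i}$ along $\phi$; this makes sense because the fiber of $\cL_{X,Y,i}$ at an arc point $\gamma$ is canonically identified with $H^i(X^\an_\gamma,Y^\an_\gamma,\Q)$ once one trivializes on a contractible neighborhood. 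Homotopy invariance of parallel transport, together with the observation that any two representatives of the homotopy class $\phi$ become homotopic in $S^\an$ after shrinking $S$ to a common open subscheme, shows $\Phi_{(X,Y,i)}$ is independent of all the auxiliary choices.

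Next I would check naturality on the two families of edges. For a morphism $f:X\to X'$ with $f(Y)\subset Y'$, after spreading out both pairs over a common open $S$ one gets a morphism of local systems $f^*:\cL_{X',Y',i}\to \cL_{X,Y,i}$, which by construction is flat for the Gauss--Manin-type connection and therefore commutes with parallel transport along $\phi$; this gives commutativity of the square for $f^*$. For an edge $\partial:(Y,Z,i)\to(X,Y,i+1)$, the coboundary morphism in the long exact sequence of the triple $X\supset Y\supset Z$ is likewise a map of local systems on any common $S$ over which the three varieties are spread, and again this is flat, so $\Phi$ commutes with $\partial$.

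The main subtlety — and the only real obstacle — is that the definition of $\Phi_{(X,Y,i)}$ requires choosing a model and an open subset of it, and different pairs or morphisms require different choices; I would handle this by noting that for any finite sub-diagram of $\Pairs^\eff(k)$ the collection of models can be restricted to a common nonempty open subscheme on which everything of interest (the local systems, the maps $f^*$, and the boundary maps) is defined, and that the invariance of $\Phi_{(X,Y,i)}$ under further shrinking (established in the first step) makes the assignment coherent. Lemma \ref{arcpointconnect} ensures there is no obstruction to the existence of $\phi$, and restricting $\phi$ to such a common open is possible because the image of any arc in a variety meets every nonempty open subvariety in an open sub-arc. Assembling these, $\Phi=(\Phi_{(X,Y,i)})$ is a well-defined natural isomorphism of representations $\mathrm{Betti}_{\gamma_1}\cong \mathrm{Betti}_{\gamma_2}$.
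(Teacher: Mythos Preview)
Your proposal is correct and follows the same approach as the paper: spread out over a model, shrink so that the relative Betti cohomology forms a local system, and use parallel transport along $\phi$ to define the isomorphism. The paper's proof is a two-sentence sketch that omits the naturality check on edges and the coherence across different models, both of which you have filled in carefully; your invocation of Lemma~\ref{arcpointconnect} is unnecessary since $\phi$ is given by hypothesis, but this is harmless.
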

\begin{proof}

Given $X,Y$ over $k$ we may spread out to a model $S$ of $K$. Moreover, by shrinking $S$ we may assume that the fibers $H^*_{\gamma_i}(X,Y)$ form a local system over $S$.

Now $\phi$ gives a homotopy equivalence class of paths between $\gamma_1$ and $\gamma_2$ as arc points of $S$. Thus, we get a natural identification of $H^*_{\gamma_1}(X,Y)$ with  $H^*_{\gamma_2}(X,Y)$, as desired. 

\end{proof}

\begin{prop}\label{functionalprop}
Let $\gamma$ be a $(k_0,\iota_0)$-arc point of $k$. 
\begin{enumerate}[label=(\alph*)]
    \item There exist categories of Nori motives $\NoriMot^{\eff}(k,\gamma),\NoriMot(k,\gamma)$ with representation $\mathcal{H}_\gamma:\Pairs^\eff(k)\to\NoriMot^{\eff}(k,\gamma)$ and functors $H_\gamma:\NoriMot^{(\eff)}(k,\gamma)\to \Q\text{-}\mathrm{mod}$ satisfying all of the properties of Theorem \ref{norimot}.
    \item   For any finitely generated extension $k\subset K$ with compatible arc points $\gamma_k$ and $\gamma_K$ as in Remark \ref{pusharc}, there is a natural base-change functor $\NoriMot^{(\eff)}(K,\gamma_K)\to\NoriMot^{(\eff)}(k,\gamma_k)$ which respects the tensor product structure.
\end{enumerate}

\end{prop}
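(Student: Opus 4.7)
My plan is to show that Nori's abstract diagrammatic construction (Theorem \ref{norimot}) applies essentially verbatim to $\mathrm{Betti}_\gamma$ in place of the classical Betti realization, once its formal properties are verified. For (a), I would first establish that $\mathrm{Betti}_\gamma : \Pairs^\eff(k) \to \Q\text{-}\mathrm{mod}$ is a well-defined, faithful, exact $\Q$-linear representation. Given $(X, Y, i)$ over $k$, spread out to $\mathcal{X} \supset \mathcal{Y}$ over a model $S$ of $k$ and shrink $S$ so that $H^i(\mathcal{X}^\an_s, \mathcal{Y}^\an_s, \Q)$ forms a local system; by stability (Lemma \ref{stablearc}) the arc $\gamma$ eventually lies in this shrunken $S$, and $H^i_\gamma(X,Y)$ is defined as the stalk of the local system along the arc---well-defined as a $\Q$-vector space since the arc is supported in a simply-connected interval, and independent of the spread-out by compatibility of stable arcs across models. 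Functoriality on edges uses the standard pullback maps and long exact sequences of triples; faithfulness, exactness, and $\Q$-linearity are classical. For the tensor structure, spreading $(X, Y)$ and $(X', Y')$ to a common model $S$ gives $X \times_k X'$ spread as $\mathcal{X} \times_S \mathcal{X}'$, and fiberwise K\"unneth gives the required tensor compatibility. The Tate object $\mathcal{H}^1_\gamma(\G_m, \{1\})$ is constructed as in the classical case, and Theorem \ref{norimot} then produces the categories $\NoriMot^{(\eff)}(k, \gamma)$ and the functor $H_\gamma$ with all stated properties.

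For (b), given $k \subset K$ with compatible arc points, my plan is to invoke the universal property of $\NoriMot(K, \gamma_K)$. I would construct a representation of $\Pairs^\eff(K)$ into $\NoriMot(k, \gamma_k)$ whose composition with $H_{\gamma_k}$ matches $\mathrm{Betti}_{\gamma_K}$; the universal property then produces the desired base-change functor. Concretely, spreading a $K$-variety over a model $T$ of $K$ equipped with dominant $\pi: T \to S$, one extracts a corresponding $k$-motivic datum whose Betti realization at $\gamma_k$ matches that of the original at $\gamma_K$---this compatibility follows from the pushforward relation $\gamma_k = \pi \circ \gamma_K$, so that the relevant fibers over $S$ are computed at the same complex points as those over $T$. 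Tensor compatibility then follows automatically from uniqueness in the universal property.

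The main obstacle is the verification of tensor compatibility in (a), which requires carefully matching the product structure on $\Pairs^\eff$ with the monoidal structure Nori's construction demands via the fiberwise K\"unneth formula applied at arc-point stalks, together with correctly articulating the base-change representation in (b) so that Betti realizations match under the pushforward of arc points.
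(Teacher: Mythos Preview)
Your approach for (a)---running Nori's abstract diagram-category machinery directly on $\mathrm{Betti}_\gamma$ and verifying the multiplicative structure via fiberwise K\"unneth---should work, but it is not the route the paper takes. The paper instead reduces to the classical Theorem~\ref{norimot} in two steps. First, when $k$ is countable it embeds into $\C$, so one can choose a genuine complex embedding $\iota:k\hookrightarrow\C$ (viewed as a constant arc point); Lemma~\ref{arcpointconnect} produces a homotopy class of paths from $\gamma$ to $\iota$, and Lemma~\ref{eqfiberfun} then gives a natural equivalence $\mathrm{Betti}_\gamma\cong\mathrm{Betti}_\iota$, so Theorem~\ref{norimot} applies literally and yields all four properties at once. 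Second, for general $k$ the paper writes $\Pairs^\eff(k)$ as the colimit of $\Pairs^\eff(\ell)$ over the directed system of countable subfields $\ell\subset k$, observes that $\mathrm{Betti}_\gamma$ is the corresponding colimit of the $\mathrm{Betti}_{\gamma_\ell}$, and uses that the diagram-category construction is itself a $2$-colimit over finite subdiagrams to identify $\NoriMot^\eff(k,\gamma)$ with the $2$-colimit of the $\NoriMot^\eff(\ell,\gamma_\ell)$; the tensor and Tannakian structures are then inherited from the countable pieces. Your direct approach avoids this two-step reduction but obliges you to reprove (or carefully check) that the arguments in \cite{huberms} establishing the tensor structure go through for the arc-point realization; the paper's approach trades that verification for a clean invocation of the known case plus some colimit bookkeeping. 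For (b), both the paper and your sketch ultimately rest on the universal property together with the base-change morphism $\Pairs^\eff(k)\to\Pairs^\eff(K)$, so there is no substantive difference there.
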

\begin{proof}

Consider first the case that $k$ (and therefore $k_0$) is countable. By Lemma \ref{arcpointconnect} any two arc points have a homotopy class of paths between them, so it is sufficient to consider a single point by Lemma \ref{eqfiberfun} and this is the case of Theorem \ref{norimot}.

Next consider the general case of part (a).  From the diagram category construction of \cite[\S 7]{huberms} we obtain from the representation $H_{\gamma_k}:\Pairs^\eff(k)\to\Q\mbox{-mod}$ a $\Q$-linear abelian category $\NoriMot^\eff(k,\gamma_k)$ with representation $\mathcal{H}_{\gamma_k}:\Pairs^\eff(k)\to \NoriMot^\eff(k,\gamma_k)$ and a faithful exact $\Q$-linear functor $H_{\gamma_k}:\NoriMot^\eff(k,\gamma_k)\to\Q\mbox{-mod}$ satisfying property (a) of Theorem \ref{norimot}. 

Consider the directed set $I$ of countable subfields $\ell\subset k$ and let $\ell_0=\ell\cap k_0$.  The arc point $\gamma_k$ induces an arc point $\gamma_\ell$ of $\ell$.  For an inclusion $\ell\subset\ell'$ the natural base-change morphism $\Pairs^\eff(\ell)\to\Pairs^\eff(\ell')$ of diagrams yields a base-change functor $\NoriMot^\eff(\ell,\gamma_\ell)\to\NoriMot^\eff(\ell',\gamma_{\ell'})$.  As every variety over $k$ is defined over some $\ell$, we naturally have 
\[\Pairs^\eff(k)=\underset{\ell\in I}{\colim}\,\Pairs^\eff(\ell)\]
as diagrams and moreover \[H_{\gamma_k}=\underset{\ell\in I}{\colim}\,H_{\gamma_\ell}.\]
as representations.  By the universal property we then have a canonical identification
\[\NoriMot^\eff(k,\gamma_k)=2\text{-}\underset{\ell\in I}{\colim}\,\NoriMot^\eff(\ell,\gamma_\ell).\]
Indeed, the diagram category is constructed as a 2-colimit over finite subdiagrams.  Properties (2) and (3) then follow.  Property (4) is by Tannakian duality, though in this case we can directly see that (4) holds with $\bfG_\mot(k,\gamma):=\lim_{\ell\in I}\bfG_\mot(\ell,\gamma_\ell)$.

Part (b) again follows from the corresponding statement in the countable case.
\end{proof}

\begin{defn}\label{relmot}In the situation of part (2) of the above proposition, the relative motivic Galois group $\bfG_\mot(K/k,\gamma_K)$ is the kernel of $\bfG_\mot(K,\gamma_K)\to\bfG_\mot(k,\gamma_k)$.  For any Nori $(K,\gamma_K)$-motive $M$, the relative motivic Galois group $\bfG_\mot(M/k,\gamma_K)$ is the image of the natural map $\bfG_\mot(K/k,\gamma_K)\to \bfG_\mot(M,\gamma_K)$.
\end{defn}


It follows naturally from the construction that paths between stable arc points give compatible isomorphisms between the $\pi_1$ groups, the functors $H_\gamma$, the categories of Nori motives, and the motivic Galois groups.

\def\LocSys{\mathrm{LocSys}}
\subsection{The relative motivic Galois group}\label{noriGalois}  In this section we relate the relative motivic Galois group (over a point) to the algebraic monodromy group.  For $k$ a subfield of $\C$ this is a result of Ayoub \cite{ayoubgaloisii} (in a different but equivalent context by \cite{isomgalois}, as mentioned above) and of Nori (unpublished).  The details of the latter argument have recently been worked out by Mostaed \cite{mostaed}.

As in the previous section, let $k_0\subset k$ be a finitely generated field extension such that $k_0$ is algebraically closed in $k$ and let $\iota_0:k_0\to \C$ be a field embedding.  Let $\gamma$ be a $(k_0,\iota_0)$-arc point of $k$.
\begin{defn}
We define $(k/\C)^{\an}$ to be the pro-manifold obtained by taking the system of manifolds $S^{\an}$ for (smooth) models $S$ of $k$.  Given an arc point $\gamma$ of $k$ we obtain an arc point of every model as in the previous section and we define a \emph{homotopy class of paths} of $(k/\C)^\an$ as a compatible system of homotopy classes of paths from $\gamma$ to $\gamma$.  The resulting fundamental group $\pi_1((k/\C)^\an,\gamma)$ naturally agrees with the inverse limit of $\pi_1(S^\an,\gamma)$ over all smooth models $S$ of $k$.
\end{defn}

Denote by $\LocSys_\Q(k,\gamma)$ the category of finite-dimensional $\Q$-representations of $\pi_1((k/\C)^\an,\gamma)$, which is equivalently the category of compatible systems of $\Q$-local systems on sufficiently small models.  The category $\LocSys_\Q(k,\gamma)$ is naturally a neutral Tannakian category, whose fiber functor is the restriction to $\gamma$.  Concretely, the Tannakian group of the subcategory generated by an object $L$ is the Zariski closure of the image of the monodromy representation.  We denote the full Tannakian group of $\LocSys_\Q(k,\gamma)$ by $\Pi_1(k,\gamma)$.

We have a natural sequence of functors of neutral Tannakian categories (that is, tensor functors respecting the fiber functor)
\[\NoriMot^{\eff}(\C,\id)\to\NoriMot^{\eff}(k,\gamma)\xrightarrow{\mathscr{H}_\gamma} \LocSys_\Q(k,\gamma)\]
the first given by base-change from $\C$ to $k$ as in Proposition \ref{functionalprop} and the second the functor associated via the universal property to the representation of $\Pairs^\eff(k,\gamma)$ which sends $(X,Y,i)$ to the local system $\mathscr{H}^i_\gamma(X,Y)$ whose fiber over $s$ is $H^i(X_s,Y_s,\Q)$, for a sufficiently small model of $S$.  This is a easily checked to be a tensor functor.
\begin{thm}[{Ayoub \cite[Th\'eor\`eme 2.57]{ayoubgaloisii}, Nori}, Mostaed \cite{mostaed}]\label{exactGalois}
The resulting sequence of pro-algebraic groups
\[\Pi_1(k,\gamma)\to\G_\mot(k,\gamma)\to\G_\mot(k_0,\iota_0)\to 1\]
is exact.
\end{thm}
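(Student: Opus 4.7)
The plan is to use the standard Tannakian criterion for exactness of a short exact sequence of affine group schemes: a sequence $H\to G\to Q\to 1$ is exact if and only if the associated sequence of tensor functors of neutral Tannakian categories $\mathrm{Rep}(Q)\to\mathrm{Rep}(G)\to\mathrm{Rep}(H)$ satisfies (a) the functor $\mathrm{Rep}(Q)\to\mathrm{Rep}(G)$ is fully faithful with essential image closed under subobjects, and (b) an object of $\mathrm{Rep}(G)$ is in the essential image of $\mathrm{Rep}(Q)$ if and only if its image in $\mathrm{Rep}(H)$ is trivial. Translated to our setup, the proof splits into three verifications.

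First, I would show that the base-change functor $\NoriMot(k_0,\iota_0)\to\NoriMot(k,\gamma)$ is fully faithful with image closed under subobjects, which gives the surjectivity $\G_\mot(k,\gamma)\twoheadrightarrow\G_\mot(k_0,\iota_0)$. Using the 2-colimit description of Proposition \ref{functionalprop}(b) over countable subfields, this reduces to a standard statement in Nori's framework, and it ultimately relies on the fact that for a $k_0$-variety $X$, the Betti realization $H^i_\gamma(X_k,Y_k)$ is canonically identified with $H^i((X_\C)^\an,(Y_\C)^\an,\Q)$ since models of $X_k$ are pulled back from $X$.

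Second, the composition $\NoriMot(k_0,\iota_0)\to\NoriMot(k,\gamma)\to\LocSys_\Q(k,\gamma)$ lands in trivial local systems: given $k_0$-varieties $X,Y$, after spreading $X_k,Y_k$ out to a smooth model $S$ of $k$ as $X\times_{k_0}S\to S$ and $Y\times_{k_0}S\to S$, the local system $\mathscr{H}^i_\gamma(X_k,Y_k)$ is pulled back from $\mathrm{Spec}\,k_0$ via $S\to\mathrm{Spec}\,k_0$, hence constant and therefore trivial as a monodromy representation.

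The main obstacle is the converse direction of (b): if $M\in\NoriMot(k,\gamma)$ has trivial Betti local system $\mathscr{H}_\gamma(M)$, then $M$ is isomorphic to the base change of an object of $\NoriMot(k_0,\iota_0)$. Following the Nori strategy as worked out by Mostaed \cite{mostaed}, I would realize $M$ as a subquotient of $\mathcal{H}^i_\gamma(X,Y)$ for some pair $(X,Y)$ over $k$, spread out over a model $S$, and exploit the triviality of the monodromy together with proper/smooth base change for Betti cohomology to construct a $k_0$-model whose fiber realizes $M$; the universal property of the diagram category then upgrades this to an isomorphism on the level of Nori motives. Alternatively, one may deduce the result directly from Ayoub's theorem \cite[Th\'eor\`eme 2.57]{ayoubgaloisii} combined with the canonical identification of motivic Galois groups between the Nori and Voevodsky formalisms \cite{isomgalois}. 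Either route requires substantial structural input---the full six-functor formalism in Ayoub's setting, or the triangulated enhancement of Nori's construction in Mostaed's---which is precisely where the real difficulty lies.
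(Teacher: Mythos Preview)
Your Tannakian criterion for exactness at $\G_\mot(k,\gamma)$ is incomplete. The two conditions you state only ensure that the \emph{normal closure} of the image of $\Pi_1(k,\gamma)$ equals the kernel of $\G_\mot(k,\gamma)\to\G_\mot(k_0,\iota_0)$, not that the image itself does. A clean counterexample to your criterion: take $H=B$ a Borel subgroup of $G=\SL_2$ and $Q=1$. Your condition (a) is vacuous, and your condition (b) holds because any $\SL_2$-representation on which $B$ acts trivially is itself trivial (the normal closure of $B$ being all of $\SL_2$). Yet $B\to\SL_2\to 1\to 1$ is certainly not exact at $\SL_2$.

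The missing condition is that for every $M\in\NoriMot(k,\gamma)$ the monodromy invariants of $\mathscr{H}_\gamma(M)$ arise as the local system of a \emph{submotive} $M_0\subset M$; in Tannakian language, one needs $V^{H'}$ to be a $\G_\mot(k,\gamma)$-subrepresentation for every $V$, where $H'$ denotes the image of $\Pi_1(k,\gamma)$. This is the motivic lift of Deligne's theorem of the fixed part, and it is precisely what the paper isolates as the key content, invoking Arapura's result to supply it. Your ``main obstacle'' (trivial monodromy implies constant) is a consequence of the fixed-part statement but not conversely, so even if your spreading-out argument for it succeeds, the proposal as written does not address the actual crux of the proof.
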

\begin{proof}See \cite{mostaed} for details.  The main content of the theorem is exactness in the middle.  The composition of the two middle maps is trivial since the base-change of any motive over $k_0$ to $k$ (which we henceforth call a constant motive) has trivial monodromy.  To show the image is precisely the kernel, we must lift the theorem of the fixed part to the category of motives.  Precisely, we must show that for any motive $M$ over $k$ there is a constant submotive $M_0\subset M$ whose associated local system is precisely the fixed part of $M$.  This follows for instance from a theorem of Arapura \cite[Theorem 7.1]{arapura}, which lifts the cohomology of $\mathscr{H}_\gamma(M)$ to a constant motive.
\end{proof}

\begin{cor}\label{pi1}For any Nori $(k,\gamma)$-motive $M$, the relative motivic Galois group $\bfG_\mot(M/\C,\gamma)$ is the Zariski closure of the monodromy of the Betti local system $\mathscr{H}_\gamma(M)$.
\end{cor}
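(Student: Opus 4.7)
The plan is to deduce the corollary directly from the exactness statement of Theorem \ref{exactGalois} together with the standard Tannakian formalism for images of subgroups. Concretely, since $k$ arises as the function field of a complex algebraic variety, I would apply Theorem \ref{exactGalois} with $k_0 = \C$ and $\iota_0 = \id_\C$; the hypothesis that $k_0$ be algebraically closed in $k$ is automatic here. This yields the exact sequence
\[
\Pi_1(k,\gamma) \longrightarrow \bfG_\mot(k,\gamma) \longrightarrow \bfG_\mot(\C,\id) \longrightarrow 1.
\]

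Next I would combine this with Definition \ref{relmot}. By definition, $\bfG_\mot(k/\C,\gamma)$ is the kernel of the second arrow in this sequence, hence by exactness equals the (scheme-theoretic) image of $\Pi_1(k,\gamma)$ in $\bfG_\mot(k,\gamma)$. Applying the second part of Definition \ref{relmot}, the relative group $\bfG_\mot(M/\C,\gamma)$ is the image of $\bfG_\mot(k/\C,\gamma)$ under the natural surjection $\bfG_\mot(k,\gamma)\to \bfG_\mot(M,\gamma) \subset \bfGL(H_\gamma(M))$. Composing, $\bfG_\mot(M/\C,\gamma)$ coincides with the image of $\Pi_1(k,\gamma)$ inside $\bfGL(H_\gamma(M))$.

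The final step is a Tannakian identification. The composite functor $\NoriMot(k,\gamma)\xrightarrow{\mathscr{H}_\gamma}\LocSys_\Q(k,\gamma)$ sends $M$ to $\mathscr{H}_\gamma(M)$ equipped with its monodromy action of $\pi_1((k/\C)^\an,\gamma)$, and by construction of $\Pi_1(k,\gamma)$ as the Tannakian fundamental group of $\LocSys_\Q(k,\gamma)$, the image of $\Pi_1(k,\gamma)$ in the automorphism group of any object $L$ is the Tannakian group of the subcategory generated by $L$, which, as recalled in \S\ref{noriGalois}, is precisely the $\Q$-Zariski closure of the monodromy representation. Applying this with $L = \mathscr{H}_\gamma(M)$ completes the identification.

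The argument is essentially formal given the two major inputs (Theorem \ref{exactGalois} and Tannakian duality), so the only genuine point to check is that the action of $\Pi_1(k,\gamma)$ on $H_\gamma(M)$ induced by the inclusion $\Pi_1(k,\gamma) \hookrightarrow \bfG_\mot(k,\gamma)$ really is the tautological monodromy action on $\mathscr{H}_\gamma(M)$. This compatibility is built into the construction of the tensor functor $\mathscr{H}_\gamma$ via the universal property of $\NoriMot(k,\gamma)$, so no additional work is required. I do not anticipate a serious obstacle: all of the difficulty lies in Theorem \ref{exactGalois} itself, which is cited.
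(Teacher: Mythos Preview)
Your argument is correct and is precisely the intended deduction: the paper states the corollary without proof as an immediate consequence of Theorem \ref{exactGalois}, and your unpacking via Definition \ref{relmot} and the Tannakian description of $\Pi_1(k,\gamma)$ is exactly the natural one. The only cosmetic slip is calling the map $\Pi_1(k,\gamma)\to\bfG_\mot(k,\gamma)$ an ``inclusion''---Theorem \ref{exactGalois} does not assert injectivity---but since you only ever use its image this is harmless.
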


\subsection{The comparison torsor}\label{noritorsor}

From now on we take $k_0=\C$ and let $k$ be a finite generated extension of $\C$, and  $\gamma$ a $(\C,\id)$-arc point of $k$.  In this section we review the construction of the torsor of comparisons between the Betti and de Rham fiber functors in the context of function fields over $\C$ as in \cite{huberms}.  We then identify the comparison torsor with the torsor $\Omega_\V$ constructed in the introduction. 

Our comparision map between Betti and de Rham cohomology will no longer be over $\C$ but instead over a larger field of germs of meromorphic functions. We therefore make the following definition:

\begin{defn}
Let $\cS$ be a complex manifold and $\gamma$ an arc point of $\cS$. We define the localization $\cO_{\cS,\gamma}$ to be 
$$\cO_{\cS,\gamma}:=\lim_{\gamma\subset U} \cO_\cS(U)$$ where the limit is over all open subsets through which the arc point factors. 

We define $k^{\an}_{\gamma}$ to be the fraction field of $\cO_{S^{\an},\gamma}$ for any model $S$ of $k$. It is immediate that this is independent of the model. 

\end{defn}

For any vertex $(X,Y,i)$ of $\Pairs^\eff(K)$, we claim there is a natural comparison 
\begin{equation}\label{comp}\phi_{X,Y,i}:H^i_{DR}(X,Y)\otimes_k k_\gamma^\an\to H^i_{\gamma}(X,Y)\otimes_\Q k_\gamma^\an.\end{equation}
By spreading out $X$ and $Y$ and possibly shrinking $S$, we may think of $X,Y$ as varieties over $S$ such that $H^i(X_t,Y_t,\Q)$ forms a local system over $S^\an$, and $H^i_{DR}(X,Y)$ is naturally the associated algebraic flat vector bundle.  The comparison \eqref{comp} is then the analytic comparison over $S^\an$ via fiberwise integration.

This naturally yields a representation of $\Pairs^\eff(k)$ and as above we therefore have a functor
\[\NoriMot(k)\to (k,\Q)_{k_\gamma^\an}\mbox{-mod},\;\;\;\; M\mapsto (H_{DR}(M),H_{\gamma}(M),\phi_M)\]
and therefore a faithful exact functor $H_{DR}:\NoriMot(k)\to k\mbox{-mod}$.  As in \cite[\S 8.4]{huberms}, there is an affine $k$-pro-scheme $\mathcal{X}$ whose points over a $k$-algebra $R$ are the isomorphisms of fiber functors
\[H_{DR}\otimes_k R\to H_\gamma\otimes_\Q R.\]
Moreover, $\mathcal{X}(k,\gamma)$ is naturally a torsor for $\bfG_\mot(k,\gamma)_k$.  Likewise, for any Nori motive $M$ over $k$, there is an affine $k$-scheme $\mathcal{X}(M,\gamma)$ of such isomorphisms of the restrictions to the tensor category $\langle M\rangle$ generated by $M$, and it is a torsor for $\bfG_\mot(M,\gamma)_k$.

Let $\mathcal{X}(k/\C,\gamma)\subset \mathcal{X}(k,\gamma)$ be the closed sub-pro-scheme of isomorphisms which restrict to the canonical comparison (fiberwise integration) on constant motives, which is naturally a torsor for $\bfG_\mot(k/\C,\gamma)$, and likewise define $\mathcal{X}(M/\C,\gamma)\subset \mathcal{X}(M,\gamma)$, which is a torsor for $\bfG_\mot(M/\C,\gamma)$.

Choose a model $S$ for $k$ such that $M$ is in the diagram category generated by pairs with models over $S$ whose cohomologies are local systems over $S^\an$. With the notation as in the introduction and taking $\V$ to be the variation of Hodge structures over $S^\an$ with underlying local system $\mathscr{H}_\gamma(M)$, there is a natural algebraic closed embedding $\mathcal{X}(M/\C,\gamma)\to \mathbb{I}_k$ by evaluating on $M$, as an isomorphism of fiber functors on $\langle M\rangle$ is determined by its value on $M$.  Moreover, analytic continuation of the canonical comparison yields a point of $\mathcal{X}(M/\C,\gamma)$ over $k^\an_\gamma$, so $\mathcal{X}(M/\C,\gamma)$ contains the germ of $\Sigma_\V$ and hence $\Omega_{\mathscr{H}_\gamma(M)}:=(\Omega_\V)_k$.  On the other hand, both $(\Omega_\V)_k$ and $\mathcal{X}(M/\C,\gamma)$ are torsors for $\bfG_\mot(M/\C,\gamma)$ by Corollary \ref{pi1}.  

Thus we deduce the following, which is essentially Nori's proof of the Grothendieck period conjecture:  

\begin{prop}\label{prop:sameperiodtorsors}
The canonical map $\Omega_{\mathscr{H}_\gamma(M)}\to \mathcal{X}(M/\C,\gamma)$ is an isomorphism.
\end{prop}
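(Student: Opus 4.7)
The plan is to observe that the paragraph immediately preceding the statement has already done most of the work. We have a closed embedding $(\Omega_\V)_k \hookrightarrow \mathcal{X}(M/\C,\gamma)$ inside $\mathbb{I}_k$, and we know that both sides are torsors for $\bfG_\mot(M/\C,\gamma)_k$: the left-hand side by Lemma \ref{everythingisaGtorsor} combined with Corollary \ref{pi1}, the right-hand side by the Tannakian construction of $\mathcal{X}(M/\C,\gamma)$. It therefore suffices to verify that this closed embedding is equivariant for the two torsor structures, because any closed $G$-equivariant embedding of $G$-torsors is automatically an isomorphism: both sides are irreducible of the same dimension, so the embedding is surjective on points, and a closed surjection of reduced irreducible schemes of equal dimension is an isomorphism.

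To verify equivariance I would unwind both torsor structures as restrictions of a single ambient action. The total space $\mathbb{I}$ carries a tautological action of $\bfGL(V_{\C,0})$ by post-composition, making $\mathbb{I}\to S$ into a $\bfGL(V_{\C,0})$-torsor. By Lemma \ref{everythingisaGtorsor}, the torsor structure on $(\Omega_\V)_k$ is nothing but the restriction of this action to the subgroup $\G_\full\subset \bfGL(V_{\C,0})$. On the other hand, the torsor structure on $\mathcal{X}(M/\C,\gamma)$, realized as a closed subscheme of $\mathbb{I}_k$ by evaluation on the motive $M$, is given by composition with automorphisms of the Betti fiber functor $H_\gamma$ at $M$; since $H_\gamma(M)$ is canonically identified with $V_{\Q,0}$, this amounts to post-composition on $\mathbb{I}_k$ by the subgroup $\bfG_\mot(M/\C,\gamma)\subset \bfGL(V_{\C,0})$. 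By Corollary \ref{pi1} these two subgroups of $\bfGL(V_{\C,0})$ literally coincide, and the two torsor structures therefore agree on the nose through the embedding into $\mathbb{I}_k$.

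I anticipate no substantial obstacle: the crucial identification of the two relevant subgroups of $\bfGL(V_{\C,0})$ is exactly Corollary \ref{pi1}, and the rest of the argument is the formal fact that an inclusion of torsors under the same group is an equality. The only point that deserves a small amount of care is matching conventions — checking that the Tannakian action on the comparison torsor really does translate into post-composition (rather than pre-composition) on $\mathbb{I}$ after the evaluation embedding, and that the constancy condition carving $\mathcal{X}(M/\C,\gamma)$ out of $\mathcal{X}(M,\gamma)$ corresponds precisely to restricting from $\bfG_\mot(M,\gamma)$ to the kernel $\bfG_\mot(M/\C,\gamma)$. Both of these are bookkeeping checks rather than substantive obstructions, and they are already implicit in the exact sequence of Theorem \ref{exactGalois}.
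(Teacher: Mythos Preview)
Your proposal is correct and is exactly the argument the paper gives in the paragraph immediately preceding the proposition: the containment $(\Omega_\V)_k\subset\mathcal{X}(M/\C,\gamma)$ inside $\mathbb{I}_k$ together with the fact that both are torsors for the same group $\bfG_\mot(M/\C,\gamma)=\G_\full$ (via Corollary~\ref{pi1}) forces equality. One small caveat: your parenthetical justification that an equivariant closed embedding of $G$-torsors is an isomorphism because ``both sides are irreducible'' is circular here---irreducibility of $\mathcal{X}(M/\C,\gamma)$ is not known a priori and is in fact deduced \emph{from} this proposition in Theorem~\ref{thm:KZfunctional}---but the principle itself is of course standard and follows immediately from transitivity of the $G$-action on the target.
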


\subsection{The geometric Kontsevich--Zagier conjecture}\label{noriKZ}

We begin by defining the ring of formal periods in our setting. It shall be convenient to work with relative homology classes, so we define $H_{n,\gamma}(X,Y)$ in precisely the same way we did for cohomology, as a limit along a path up to equivalence.

\begin{defn}
The space of effective formal periods $\tilde{\bP}^{\eff}(k)$ is defined as the $\C$ - vector space generated by formal symbols $(X,Y,\omega,\ell)$ where $X$ is an algebraic variety over $k$, $Y\subset X$ is a closed subvariety, $\ell\in H^d_{DR}(X,Y)$ and $\gamma\in H_{n,\gamma}(X,Y)$ with relations given by:

\begin{enumerate}
    \item Linearity in each of $\omega,\ell$
    \item For every $f:X\ra X'$ with $f(Y)\subset Y'$ we have $$(X,Y,f^*\omega,\ell)=(X',Y',\omega',f_*\ell)$$
    \item For every triple $Z\subset Y\subset X$ $$(Y,Z,\omega,\partial\ell)=(X,Y,\delta\omega,\ell).$$
    
\end{enumerate}

We write $[X,D,\omega,\ell]$ for the image of the generator.

\end{defn}

We turn $\tilde{\bP}^{\eff}(k)$ into an algebra by setting $$[X,Y,\omega,\ell]\cdot [X',Y',\omega',\ell']:=[X\times X',Y\times Y', \omega\wedge\omega',\ell\times\ell'].$$ That multiplication is well defined is a standard check, see \cite[13.1.3]{huberms}. Finally, we define the ring of formal periods $\tilde{\bP}(k)$ to be the localization of $\tilde{\bP}^{\eff}(k)$ at $[\bG_m,\{1\},\frac{dx}{x},S^1]$.

\begin{thm}\label{thm:hubercomp}
The scheme $\Spec\tilde{\bP}(k)$ is naturally a torsor for the motivic Galois group $G_{\mot}(k,\gamma)$ base-changed to $k$. Moreover, it is naturally isomorphic to $\mathcal{X}(k,\gamma)$. 

\end{thm}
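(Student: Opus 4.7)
The plan is to identify both $k$-schemes in the theorem by matching their functors of points. An $R$-point of $\mathcal{X}(k,\gamma)$ is by definition a tensor-compatible isomorphism $\phi$ between the restrictions of the Betti and de Rham fiber functors to $R$. To such a $\phi$ I would associate the algebra homomorphism $\alpha_\phi : \tilde{\bP}(k) \to R$ defined on generators by
\[
\alpha_\phi\bigl([X,Y,\omega,\ell]\bigr) \;:=\; \langle \phi_{X,Y,i}(\omega),\, \ell\rangle,
\]
using the natural pairing between $H^i_\gamma(X,Y)$ and $H_{i,\gamma}(X,Y)$. Functoriality of $\phi$ in morphisms of pairs yields relation (2) in the definition of $\tilde{\bP}(k)$, compatibility with long exact sequences of triples gives relation (3), linearity (1) is tautological, and multiplicativity of $\alpha_\phi$ follows from K\"unneth together with $\phi$ being a tensor isomorphism. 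Conversely, any algebra map $\alpha : \tilde{\bP}(k)\to R$ reconstructs $\phi_{X,Y,i}(\omega)\in H^i_\gamma(X,Y)\otimes R$ by duality, since homology and cohomology are perfectly paired; the three relations imposed on $\tilde{\bP}(k)$ are precisely what force the reconstruction to be natural and coboundary-compatible, hence to define a morphism of fiber functors. The two assignments are mutually inverse and functorial in $R$, yielding an isomorphism of $k$-schemes $\Spec \tilde{\bP}(k)\cong \mathcal{X}(k,\gamma)$. The torsor structure then transports across this isomorphism from the known one on $\mathcal{X}(k,\gamma)$, where $\bfG_\mot(k,\gamma)_k$ acts by post-composition via automorphisms of $H_\gamma$; that this agrees with the obvious comodule structure on the formal period algebra can be checked on the generators $[X,Y,\omega,\ell]$.

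The main obstacle, following the template of \cite[Ch.~13]{huberms} in the classical setting, is twofold. First, one must verify that the reconstruction $\alpha\mapsto \phi$ lands in honest \emph{isomorphisms} rather than merely morphisms of fiber functors. This is forced by the localization step: inverting the symbol $[\G_m,\{1\},\tfrac{dx}{x},S^1]$ in $\tilde{\bP}(k)$ makes its image in $R$ a unit, which in turn makes $\phi$ invertible on the Tate object $\mathcal{H}^1_\gamma(\G_m,\{1\})$, and hence on all of $\NoriMot(k,\gamma)$ by rigidity. Secondly, the classical Huber-M\"uller-Stach argument is written for subfields of $\C$ equipped with a genuine embedding, whereas here the fiber functor $H_\gamma$ is attached to an arc point on a possibly uncountable field $k$ over $\C$. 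The first difficulty is essentially formal, since $H_\gamma$ still arises from an honest $\Q$-mod-valued representation $\mathrm{Betti}_\gamma$ of $\Pairs^\eff(k)$ (by Lemma \ref{eqfiberfun} and the discussion of \S\ref{norinew}), so Nori's diagram-algebra construction applies verbatim. The second is handled exactly as in Proposition \ref{functionalprop}: one writes $\tilde{\bP}(k)$ as a filtered colimit and $\mathcal{X}(k,\gamma)$ as a filtered limit over countable subfields $\ell\subset k$ endowed with the induced arc points $\gamma_\ell$, and invokes the countable-field identification levelwise. Compatibility with the base change maps follows from the naturality of the pairing construction above.
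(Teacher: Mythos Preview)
Your proposal is correct and is precisely an unpacking of the argument the paper invokes by citation: the paper's proof consists of the single line ``This follows identically as in \cite[13.1.4]{huberms}, using \cite[8.4.10]{huberms},'' and what you have written is exactly the content of those references, together with the observation (parallel to Proposition~\ref{functionalprop}) that the passage from subfields of $\C$ to complex function fields with an arc point requires no new ideas because $H_\gamma$ is still a genuine $\Q$-linear fiber functor built from a representation of $\Pairs^\eff(k)$.
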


\begin{proof}

This follows identically as in \cite[13.1.4]{huberms}, using \cite[8.4.10]{huberms}.

\end{proof}

Note that there is a natural evaluation map $\textrm{ev}_k:\tilde{\bP}(k)\ra k$ given by fiber-wise integration.

Next, we define the ring of \textit{relative} formal periods. The idea is that for constant families (i.e. base-changed from $\C$), we want to identify the formal period with the actual complex number it evaluates to.

\begin{defn}
Since $\C\subset k$ there is a natural map $\tilde{\bP}(\C)\ra \tilde{\bP}(k)$. We define 
$$\tilde{\bP}(k/\C):=\tilde{\bP}(k)\times_{\tilde{\bP}(\C)}\C$$ where we view $\C$ as a $\tilde{\bP}(\C)$-algebra via the period map.
\end{defn}

\begin{thm}\label{thm:relativecomp}
The scheme $\Spec\tilde{\bP}(k/\C)$ is naturally a torsor for the relative motivic Galois group $G_{\mot}(k/\C)$. Moreover, it is naturally isomorphic to $\mathcal{X}(k/\C,\gamma)$.
\end{thm}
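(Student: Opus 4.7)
The plan is to combine Theorem \ref{thm:hubercomp} with the exact sequence of motivic Galois groups from Theorem \ref{exactGalois}, viewing both $\Spec\tilde{\bP}(k/\C)$ and $\mathcal{X}(k/\C,\gamma)$ as fibers of parallel morphisms over corresponding distinguished points. Once the isomorphism is established, the torsor structure on $\Spec\tilde{\bP}(k/\C)$ will transfer from the torsor structure on $\mathcal{X}(k/\C,\gamma)$ already recorded in \S\ref{noritorsor}.

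First I would set up a diagram of fiber squares. The inclusion $\C\subset k$ induces a map $\tilde{\bP}(\C)\to \tilde{\bP}(k)$ and hence a morphism $\Spec\tilde{\bP}(k)\to \Spec\tilde{\bP}(\C)$; in parallel, the base-change functor $\NoriMot(\C,\id)\to \NoriMot(k,\gamma)$ of Proposition \ref{functionalprop}(b) induces a restriction morphism $\mathcal{X}(k,\gamma)\to\mathcal{X}(\C,\id)$ on comparison torsors. By the naturality of the construction in Theorem \ref{thm:hubercomp} (both sides being built from the same representation of $\Pairs^\eff$), the isomorphisms of that theorem applied for $k$ and for $\C$ intertwine these two morphisms. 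Moreover, the distinguished $\C$-point of $\Spec\tilde{\bP}(\C)$ used to define the fiber product $\tilde{\bP}(k/\C)$ is the evaluation map $\mathrm{ev}_\C$, while the distinguished $\C$-point of $\mathcal{X}(\C,\id)$ used to cut out $\mathcal{X}(k/\C,\gamma)$ is the canonical fiberwise integration comparison; these two points correspond under Huber--M\"uller-Stach essentially tautologically, since for a pair over $\C$ the evaluation of a formal period symbol is precisely its Betti--de Rham period. Taking fibers of the intertwined maps over these matched points yields the desired natural isomorphism $\Spec\tilde{\bP}(k/\C)\cong \mathcal{X}(k/\C,\gamma)$.

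The torsor claim is then automatic: $\mathcal{X}(k/\C,\gamma)$ is already known in \S\ref{noritorsor} to be a $\bfG_\mot(k/\C,\gamma)$-torsor, and this structure transports across the isomorphism. Alternatively, one can argue directly using Theorem \ref{exactGalois}, which yields the exact sequence
\[1\to \bfG_\mot(k/\C,\gamma)\to \bfG_\mot(k,\gamma)\to \bfG_\mot(\C,\id)\to 1,\]
and observing that the $\bfG_\mot(k,\gamma)_k$-torsor $\Spec\tilde{\bP}(k)$ maps equivariantly to the $\bfG_\mot(\C,\id)_k$-torsor $\Spec\tilde{\bP}(\C)_k$, so the fiber over any $\C$-point is a torsor for the kernel.

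The main obstacle is verifying the two compatibility assertions underlying step one: (i) that the Huber--M\"uller-Stach identification of Theorem \ref{thm:hubercomp} is genuinely natural in the field, so that the square of restriction-to-constant-motives morphisms commutes with the square of $\tilde{\bP}(\C)\to\tilde{\bP}(k)$, and (ii) that under this identification $\mathrm{ev}_\C\in\Spec\tilde{\bP}(\C)(\C)$ is sent to the canonical Betti--de Rham comparison point in $\mathcal{X}(\C,\id)(\C)$. Both are really statements built into the construction of the isomorphism in \cite[\S 8.4, \S 13.1]{huberms}, and once unpacked from that reference the proof becomes a formal consequence of the fiber-product description.
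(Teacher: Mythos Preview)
Your proposal is correct and follows essentially the same route as the paper: both arguments identify $\Spec\tilde{\bP}(k/\C)$ and $\mathcal{X}(k/\C,\gamma)$ as the fibers of the parallel maps $\Spec\tilde{\bP}(k)\to\Spec\tilde{\bP}(\C)$ and $\mathcal{X}(k,\gamma)\to\mathcal{X}(\C,\id)$ over the matched evaluation/canonical-comparison points, using Theorem~\ref{thm:hubercomp} for both $k$ and $\C$, and obtain the torsor structure from $\bfG_\mot(k/\C,\gamma)$ being the kernel. One small remark: the short exact sequence you write with a ``$1$'' on the left is not what Theorem~\ref{exactGalois} gives you---but you do not need that theorem here at all, since $\bfG_\mot(k/\C,\gamma)$ is \emph{by definition} (Definition~\ref{relmot}) the kernel of $\bfG_\mot(k,\gamma)\to\bfG_\mot(\C,\id)$, which is exactly what the paper invokes.
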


\begin{proof}
The first part of the theorem follows immediately by using Theorem \ref{thm:hubercomp} for $k$ and for $\C$. Indeed, there is a natural map $\Spec\tilde{\bP}(k)\ra \Spec\tilde{\bP}(\C)$ and the fiber over the point $\phi_\C$ is precisely $\Spec\tilde{\bP}(k/\C)$. 

On the other hand the relative period torsor is precisely the fiber in the $k$-period torsor over the $\C$-period torsor of the point $\phi_\C$. 

By definition $G_{\mot}(k/\C)$ is the kernel of the map $G_{\mot}(k)\ra G_{\mot}(\C)$, proving the torsor statement.

Finally, the isomorphism to $\mathcal{X}(k/\C,\gamma)$ follows from Theorem \ref{thm:hubercomp} and the fact that $\mathcal{X}(k/\C,\gamma)$ is the fiber of $\mathcal{X}(k,\gamma)$ over the canonical comparison point of $\mathcal{X}(\C,\gamma)$. 
\end{proof}

We now come to our main statement, the integrality of the relative period ring:

\begin{thm}\label{thm:KZfunctional}
The relative period ring $\tilde{\bP}(k/\C)$ is an integral domain, and the evaluation map $\textrm{ev}_k$ is an isomorphism.
\end{thm}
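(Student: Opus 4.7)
The plan is to translate both assertions into geometric claims about the period torsors $\Omega_\V$ via the dictionary set up in \S\ref{noritorsor}, and then exploit the irreducibility and Zariski density of $\Sigma_\V\subset\Omega_\V$. Concretely, by Theorem \ref{thm:relativecomp} we have $\Spec\tilde{\bP}(k/\C)\cong\mathcal{X}(k/\C,\gamma)$, and writing $\NoriMot(k,\gamma)$ as a filtered colimit of its tensor-generated subcategories $\langle M\rangle$ gives
\[\tilde{\bP}(k/\C) \;=\; \colim_M\,\Gamma\!\left(\mathcal{X}(M/\C,\gamma)\right).\]
Each inclusion $\langle M\rangle\subset\langle M'\rangle$ yields a morphism $\mathcal{X}(M'/\C,\gamma)\to\mathcal{X}(M/\C,\gamma)$ that is a quotient of torsors along the kernel of the surjection of Tannakian groups, hence faithfully flat, so the transition maps on global sections are injective. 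Proposition \ref{prop:sameperiodtorsors} then identifies each $\mathcal{X}(M/\C,\gamma)$ with $\Omega_{\mathscr{H}_\gamma(M)}$.

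For integrality, I would observe that $\Omega_\V$ is irreducible: by construction it is the Zariski closure of $\Sigma_\V$, which is the image of the connected space $\widetilde{S^\an}$ under the analytic section $\sigma_\V$. Being additionally a torsor for the smooth algebraic group $\G_\full$, it is reduced, so each $\Gamma(\Omega_\V)$ is an integral domain; a filtered colimit of integral domains along injective transition maps is again an integral domain, which gives the first assertion.

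For the evaluation statement, $\textrm{ev}_k$ is induced by the canonical analytic comparison $H_{DR}\otimes_k k^\an_\gamma \cong H_\gamma\otimes_\Q k^\an_\gamma$, which under $\mathcal{X}(M/\C,\gamma)\cong\Omega_{\mathscr{H}_\gamma(M)}$ corresponds to the germ of the section $\sigma_\V$ at $\gamma$. This germ lies on $\Sigma_\V$, and $\Sigma_\V$ is Zariski dense in $\Omega_\V$ by the very definition of the latter. Restriction of algebraic functions on $\Omega_\V$ to the generic germ of $\Sigma_\V$ is therefore injective, so $\textrm{ev}_k$ is an isomorphism of $\tilde{\bP}(k/\C)$ onto the subring of actual relative periods inside $k^\an_\gamma$.

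The main work is really bookkeeping: the substantive geometric inputs --- irreducibility of $\Omega_\V$ and density of $\Sigma_\V$ --- are both immediate from the setup in the introduction, and the only genuine check is that the transition maps in the presentation of $\tilde{\bP}(k/\C)$ as a colimit are injective (which is the faithful flatness of torsor quotients). I expect the main conceptual obstacle, if any, to lie in making the identification of $\textrm{ev}_k$ with restriction-to-$\Sigma_\V$ compatible across the filtered system so that injectivity passes to the colimit; this should follow from the compatibility of the comparison torsors $\mathcal{X}(M/\C,\gamma)$ with base-change in $M$ already built into Theorem \ref{thm:relativecomp} and Proposition \ref{prop:sameperiodtorsors}.
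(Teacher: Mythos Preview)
Your proposal is correct and follows essentially the same route as the paper: invoke Theorem~\ref{thm:relativecomp} and Proposition~\ref{prop:sameperiodtorsors} to reduce to the irreducibility of $\Omega_\V$ (as the Zariski closure of the analytically irreducible $\Sigma_\V$) and the Zariski density of $\Sigma_\V$ for the injectivity of $\mathrm{ev}_k$. The paper's proof is a one-liner leaving the passage from a single $M$ to the full pro-torsor implicit; your explicit handling of the filtered colimit via faithful flatness of the transition maps is the correct way to make that step precise.
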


\begin{proof}
This is an immediate consequence of Theorem \ref{thm:relativecomp} and Proposition \ref{prop:sameperiodtorsors}, since $\Omega_{\V}$ is analytically irreducible and the Zariski closure of an analytically irreducible set is irreducible.
\end{proof}

\subsection{The geometric Andr\'e--Grothendieck period conjecture}\label{conjnew} 
Let $k\subset K\subset k_\gamma^{\an}$ be such that $K/k$ is a finitely generated extension and $\tau^*:K\to k_\gamma^\an$ a $k$-embedding.  For any models $S$ (resp. $T$) of $k$ (resp. $K$), we then obtain a rational map $f:T\to S$ and a meromorphic section $\tau :B\to T^\an$ with Zariski dense image for an open, simply connected set $B\subset S$.   The composition $\tau\circ\gamma$ is therefore an arc point of $K$.

As in \S\ref{noritorsor}, for any vertex $(X,Y,i)$ of $\Pairs^\eff(K)$, there is a natural comparison 
\[\phi_{X,Y,i}:H^i_{DR}(X,Y)\otimes_k k_\gamma^\an\to H^i_{\tau\circ\gamma}(X,Y)\otimes_\Q k_\gamma^\an\]
by pulling back the comparison \eqref{comp} over $T^\an$ along $\tau$.  We therefore obtain a functor
\[\NoriMot(K)\to (K,\Q)_{k_\gamma^\an}\mbox{-mod},\;\;\;\; M\mapsto (H_{DR,\tau}(M),H_{\tau\circ\gamma}(M),\phi_M)\]
and we define
\[K(\mathrm{periods}_{\tau^*}\;\mathrm{of}\; M)\subset k_\gamma^\an\]
to be the field of definition of $\phi_M$.  Concretely this is the field extension obtained by adjoining the pullbacks via $\tau$ of flat coordinates of algebraic sections of $H_{DR}(M)$ over $T$.

\begin{thm}[Geometric Andr\'e--Grothendieck period conjecture]\label{Andreperiod}

 \emph{}
 \newline Let $k\subset K$ be finitely generated complex fields, $\gamma$ a $(\C,\id)$-arc point of $k$, and $\tau^*: K\to k_\gamma^\an $ an embedding of $k$-extensions.  For any Nori $(K,\tau\circ\gamma)$-motive $M$ we have 
\[\trdeg_k  K(\mathrm{periods}_{\tau^*}\;\mathrm{of}\; M)\geq \dim \bfG_\mot(M/\C,\gamma).\]
\end{thm}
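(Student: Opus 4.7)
The plan is to deduce Theorem \ref{Andreperiod} from Theorem \ref{main} via the comparison torsor formalism of \S\ref{noritorsor}. First I spread out $M$ to an admissible variation $\cV := \mathscr{H}_{\tau\circ\gamma}(M)$ on a smooth model $T$ of $K/\C$, and choose a smooth model $S$ of $k/\C$ so that the inclusion $k \subset K$ induces a dominant rational map $T \dashrightarrow S$. The embedding $\tau^*$ corresponds to a holomorphic section $\tau : B \to T^\an$ over a simply connected open set $B \subset S^\an$ with Zariski dense image in $T$. Lifting $\tau$ to $\widetilde{T^\an}$ and composing with the tautological flat section $\sigma_\cV : \widetilde{T^\an} \to \Omega_\cV$ produces an injective holomorphic map $\psi := \sigma_\cV \circ \tau : B \to \Omega_\cV$. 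I set $W := \overline{\psi(B)}^{\,\mathrm{Zar}} \subset \Omega_\cV$ and write $\pi : \Omega_\cV \to T$ for the structure map.

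Next I identify $\trdeg_\C K(\mathrm{periods}_{\tau^*}\;\mathrm{of}\; M)$ with $\dim W$. Under the identification $\Omega_\cV \cong \mathcal{X}(M/\C,\gamma)$ of Proposition \ref{prop:sameperiodtorsors}, the function field $\C(\Omega_\cV)$ is generated over $K = \C(T)$ by the entries of the tautological comparison matrix, written in any chosen integral basis of the Betti fiber and meromorphic basis of $V_\cO$ on $T$; pulling back along $\psi$, these entries are precisely the period functions whose adjunction to $K$ defines $K(\mathrm{periods}_{\tau^*}\;\mathrm{of}\; M)$ inside $k^{\an}_\gamma$. Since $\psi(B)$ is Zariski dense in $W$, the restriction map $\C(W) \hookrightarrow k^{\an}_\gamma$ is injective with image exactly $K(\mathrm{periods}_{\tau^*}\;\mathrm{of}\; M)$, so
\[ \trdeg_\C K(\mathrm{periods}_{\tau^*}\;\mathrm{of}\; M) = \dim W. \]

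Finally I apply Theorem \ref{main}. By construction $\psi$ factors through $\Sigma_\cV$, so its image lies in some irreducible component $U$ of $W \cap \Sigma_\cV$ with $\dim U \geq \dim B = \dim S$. The projection $\pi(U) \subset T$ contains $\tau(B)$ which is Zariski dense in $T$, so $\pi(U)$ is not contained in any proper weak Mumford--Tate subvariety of $T$. The contrapositive of Theorem \ref{main} then forces $\codim_W U \geq \dim \G$, giving $\dim W \geq \dim U + \dim \G \geq \dim S + \dim \G$. Invoking Corollary \ref{pi1} to identify $\dim \G = \dim \bfG_\mot(M/\C,\gamma)$ and using $\trdeg_\C k = \dim S$, additivity of transcendence degree yields
\[ \trdeg_k K(\mathrm{periods}_{\tau^*}\;\mathrm{of}\; M) = \dim W - \dim S \geq \dim \bfG_\mot(M/\C,\gamma), \]
as required. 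The main obstacle is the field-theoretic identification in the second step: one must carefully unwind Proposition \ref{prop:sameperiodtorsors} to ensure that the tautological coordinates on the torsor $\Omega_\cV$ pull back under $\psi$ to exactly the period functions generating $K(\mathrm{periods}_{\tau^*}\;\mathrm{of}\; M)$ over $K$. Once this identification is secured, Theorem \ref{main} does the remaining work.
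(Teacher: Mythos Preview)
Your argument is correct and is essentially the same as the paper's: spread $M$ out to a variation $\cV$ on a model $T$ of $K$, take $W=(\img\sigma_\cV\circ\tau)^{\Zar}$, identify $\dim W-\dim S$ with $\trdeg_k K(\mathrm{periods}_{\tau^*}\;\mathrm{of}\;M)$, and apply Theorem~\ref{main} together with Corollary~\ref{pi1}. The paper states the key dimension identity in one line and packages the application of Theorem~\ref{main} as Claim~\ref{AndreVar}, whereas you spell out the field-theoretic identification via Proposition~\ref{prop:sameperiodtorsors} and make the contrapositive explicit, but the substance is identical.
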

\begin{proof}
Note that there is a functor $MHS:\NoriMot({K},\tau\circ \gamma)\ra MHS(K)$ where $MHS(K)$ denotes the direct-limit category of admissible variations of graded-polarized integral mixed Hodge structures defined on some model of $K$. The above functor $\NoriMot(K,\tau\circ\gamma)\to(K,\Q)_{k_\gamma^\an}\mbox{-mod}$ factors through $MHS $.  

Using the above notation, for an admissible variation of graded-polarizable integral mixed Hodge structures $\cV$ over $T$, observe that since $B$ is simply connected, the map $\tau$ lifts to a map $B\ra\widetilde{T^\an}$ and therefore $\sigma_\cV\circ\tau:B\ra \Sigma_\cV$ gives a well defined map.  We have
\[\dim (\img \sigma_\cV\circ \tau)^\Zar-\dim S=\trdeg_kK(\mathrm{periods}_{\tau^*}\;\mathrm{of}\; \cV).\]
Thus, by Theorem \ref{pi1} it suffices to prove the following:

\begin{claim}\label{AndreVar}  For an admissible variation of graded-polarizable integral mixed Hodge structures $\cV$ over $T$, \begin{equation}\label{Andreineq}\dim (\img \sigma_\cV\circ \tau)^\Zar -\dim S\geq\dim\G.\end{equation}
\end{claim}

As the intersection $(\img\sigma_{\cV}\circ\tau)^\Zar\cap\Sigma_\cV$ obviously contains $\img\sigma_\cV\circ\tau$ and $\img \sigma_\cV\circ\tau$ projects to $\img \tau$ in $T$ which is Zariski dense, the claim is immediate from Theorem \ref{main}.
\end{proof}

\begin{remark}\label{rmkequal}
The geometric Andr\'e--Grothendieck period conjecture is \emph{almost} equivalent to Theorem \ref{main}, the only issue being that some variations may not come from geometry.\footnote{Whether all variations do indeed come from geometry appears to be unclear.} 

In fact, the natural generalization of the geometric Andr\'e--Grothendieck period conjecture to variations of mixed Hodge structures in the form of Claim \ref{AndreVar} \emph{is} equivalent to Theorem \ref{main}.  Indeed, the backward implication is used in the proof.  For the forward implication, let $\pi(U)$ be the projection of $U$ to $S$ and take $S'=\pi(U)^\Zar$.  Take an algebraic projection $S'\to\mathbb{A}^{\dim U}$ which is generically finite on $\pi(U)$, and take $A=\mathbb{A}^{\dim U}$.  The map $\pi(U)\to A^\an$ is generically an isomorphism, and therefore we obtain a local section $\tau$ of $S'^\an\to A^\an$ with Zariski dense image.  Applying \eqref{Andreineq} (with $(A,S')$ in the place of $(S,T)$) yields
\[\dim\G>\codim_W U\geq \dim U^\Zar-\dim U\geq\dim\G'\]
where $\G'$ is the algebraic monodromy of the restriction of $\cV$ to $S'$, so $S'$ is contained in a weak Mumford--Tate subvariety.
\end{remark}

\section{Applications}\label{elliptic}
In this section we first give a concrete example of Theorem \ref{main} for families of elliptic curves.  We then isolate some of the ideas in the example and show how the Ax--Schanuel conjecture in the form of Theorem \ref{main} allows one to formally deduce some related versions by twisting.

\subsection{Elliptic curves}
Let $S$ be a smooth irreducible variety of dimension $m$.  Let $E_1,\dots,E_n$ be non-isotrivial, pairwise non-isogenous elliptic curves over $S$ and $f_1,\dots,f_n$ sections of $E_1,\dots,E_n$ over $S$.  We therefore obtain a section $f:=(f_1,\ldots, f_n)$ of $E:=E_1\times_S\cdots\times_S E_n$.  Let $\omega_1,\dots,\omega_n$ be corresponding relative differentials, that is, sections of $H^0(\pi_*\omega_{E_i/S})$, where $\pi:E\to S$ is the projection.  We assume the $f_i$ and $\omega_i$ to be nowhere vanishing, which can always be arranged by shrinking $S$.  Finally, let $B\subset S^\an$ be an open ball over which we can trivialize the homology of $E_1,\dots,E_n$. Then by picking generators $\alpha_i,\beta_i$ of the first homology and a path $\gamma_i$ from $0$ to $f_i$, we obtain $3n$ functions by integrating the differentials along the relative homology classes $\alpha_i,\beta_i,\gamma_i$, and thus we obtain a map $F:B\ra \C^{3n}$. 

\begin{thm}

Let $T\subset \C^{3n}$ be a codimension $k$ subvariety, and suppose that $F^{-1}(T)$ contains an irreducible component $R$ of codimension $<k$. Then $R^{\Zar}\neq S$, and either two of the elliptic curves become isogenous on $R$, or at least two of the sections become torsion on $R$,  or an elliptic curve becomes isotrivial on $R$.

\end{thm}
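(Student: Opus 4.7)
The plan is to apply Theorem \ref{main} to the admissible graded-polarizable integral variation of mixed Hodge structures $\cV=\bigoplus_{i=1}^n\cV_i$ on $S$, where $\cV_i$ is the variation associated to the relative $1$-motive $[\Z\xrightarrow{1\mapsto f_i}E_i]$, equivalently to the relative cohomology $H^1(E_i/S,\{0,f_i\})$. The underlying local system has rank $3n$; on $B$ a flat frame is provided by the duals of $\alpha_i,\beta_i,\gamma_i$, and the $\omega_i$ give global de Rham sections. In this frame, the three period functions $\int_{\alpha_i}\omega_i,\int_{\beta_i}\omega_i,\int_{\gamma_i}\omega_i$ are precisely the flat coordinates of $\omega_i$, so $F=\pi\circ\sigma_\cV|_B$, where $\pi\colon\Omega_\cV\to\C^{3n}$ is the algebraic map $(s,\phi)\mapsto(\phi(\omega_{i,s}))_{i=1}^n$. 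By non-isotriviality, pairwise non-isogeny, and (without loss of generality) generic non-torsion of the $f_i$, a direct calculation identifies the algebraic monodromy as $\G=\prod_{i=1}^n(\mathbf{G}_{a,i}^2\rtimes\SL_{2,i})$, of dimension $5n$; this group acts diagonally on $\C^{3n}$ with an open orbit, so $\pi$ is dominant.

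With this setup, I would put $W=\pi^{-1}(T)\cap\Omega_\cV$ and let $U\supset\sigma_\cV(R)$ be the component of $W\cap\Sigma_\cV$ containing the lift of $R$. Dominance of $\pi$ gives $\dim W=\dim\Omega_\cV-k=m+5n-k$, while $\dim U\ge\dim R>m-k$, whence
\[ \codim_W U<5n=\dim\G. \]
Theorem \ref{main} then yields a proper weak Mumford--Tate subvariety $S'\subsetneq S$ containing $\pi(U)\supset R$, which already establishes $R^\Zar\ne S$.

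For the second part of the conclusion, I would invoke Corollary \ref{corMT} to reinterpret "$R$ contained in a proper weak Mumford--Tate subvariety" as the statement that the algebraic monodromy $\G'$ of $\cV|_{R^\Zar}$ is strictly smaller than $\G$. Because $\G$ has the clean Levi--unipotent product structure $\prod_i(\mathbf{G}_{a,i}^2\rtimes\SL_{2,i})$, the classification of normal $\Q$-subgroups arising as the monodromy of a sub-variation shows that any strict drop is a combination of three elementary mechanisms: an $\SL_{2,i}$ becoming trivial (equivalent to $E_i$ being isotrivial on $R$), two factors $\SL_{2,i},\SL_{2,j}$ being identified along a diagonal $\SL_2$ (equivalent to $E_i$ and $E_j$ becoming isogenous on $R$), or a $\mathbf{G}_{a,i}^2$ factor becoming trivial (equivalent to $f_i$ becoming torsion on $R$).

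The main obstacle is ruling out a single drop of the last type in isolation, which would account for "one section torsion" but not match any of the listed conclusions. Heuristically, a single torsion condition $Nf_i=0$ cuts out a weak Mumford--Tate locus of codimension exactly $1$ in $S$, on which the $3n$ period functions satisfy exactly one $\Q$-linear relation of the form $Nc_i=pa_i+qb_i$; thus $F$ restricted there maps onto a codimension $1$ subvariety of $\C^{3n}$, and a transversality argument forces $\codim_S R\ge\codim_{\C^{3n}}T=k$ whenever $R$ lies in such a locus alone, contradicting the hypothesis $\codim R<k$. Hence the monodromy drop must be strong enough to involve either an isotrivial factor, an isogeny between two factors, or at least two simultaneous torsion conditions, yielding the stated trichotomy.
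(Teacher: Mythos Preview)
Your setup and the core application of Theorem \ref{main} match the paper's proof exactly: the same variation $\cV=\bigoplus_i H^1(E_i,\{0,f_i\})$, the same evaluation map to $V_{\C,0}\cong\C^{3n}$, the same monodromy computation $\G=(\G_a^2\rtimes\SL_2)^n$, and the same dimension count yielding $\codim_W U<\dim\G$. The list of three elementary monodromy drops also agrees with the paper.

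The genuine gap is in your final paragraph. Your ``transversality argument'' to exclude a single torsion degeneration is exactly the kind of atypical-intersection statement that Theorem \ref{main} is designed to prove; you cannot invoke transversality for free. Concretely, knowing that $F$ restricted to the torsion locus $S_1$ lands in a hyperplane $L\subset\C^{3n}$ tells you $R\subset (F|_{S_1})^{-1}(T\cap L)$, but there is no a priori reason why $\codim_{S_1} R\ge\codim_L(T\cap L)$---this is another instance of the theorem, not a consequence of generic position. The paper closes this gap by a genuine second application of Theorem \ref{main}: it replaces $\cV$ by the quotient variation $\cV'=\gr^W_1\cV_1\oplus\bigoplus_{i>1}\cV_i$ (which forgets the torsion-sensitive $\gamma_1$-coordinate), replaces $S$ by $R^\Zar$ (which has $\dim R^\Zar\le m-1$ since $R^\Zar\ne S$), and replaces $T$ by its image $T'\subset\C^{3n-1}$ with $\codim T'\ge k-1$. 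One then has $\codim_{R^\Zar}R<k-1$, and re-running the argument forces a further monodromy drop for $\cV'|_{R^\Zar}$, contradicting the assumption that only $f_1$ became torsion. You should replace your heuristic with this inductive step.

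A minor point: your phrase ``classification of normal $\Q$-subgroups arising as the monodromy'' is slightly off---the restricted monodromy need not be normal in $\G$ (Andr\'e's theorem makes it normal in the derived Mumford--Tate group of the \emph{restriction}). The paper simply enumerates the possible proper monodromy subgroups directly.
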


\begin{proof}

First, note that over $S$ for each pair $(E_i,f_i)$ we have an admissible variation of graded-polarizable integral mixed Hodge structures $\cV_i=((V_i)_\Z,W_\bullet V_i,F^\bullet V_i)$ given by assigning to $s\in S$ the relative cohomology group $H^1(E_{i,s},\{0,f_i(s)\},\Z)$.  Note that this is an extension of the form
\begin{equation}0\to \Z(0)\to H^1(E_{i,s},\{0,f_i(s)\},\Z)\to H^1(E_i(x),\Z)\to0.\label{ext}\end{equation}
Let $\cV=\bigoplus_i \cV_i$. Note that the $\omega_i$ are algebraic sections of $V_\O=H^1_{DR}(E/S)$ over $S$.  By thinking of $f\in\bE$ above $s\in S$ as $f=\oplus f_i$ where $f_i:\cV_{i,s}\to V_{\C,0}$, the evaluation map $(f_i)\mapsto (f_i(\omega_i))$ gives an algebraic map $g:\Omega_\cV\to V_{\C,0}$.  Then $g\circ \sigma_\cV:\widetilde{S^\an}\to V_{\C,0}$ is the map which associates to a point $s$ together with a homotopy class of path to $s_0$ the cohomology class $([\omega_i])\in \bigoplus_i H^1(E_{i,s},\{0_s,f_i(s)\},\C)$, flatly continued to $s_0$ via the path.  In particular, using the basis $\alpha_i^\vee,\beta_i^\vee,\gamma_i^\vee$ on $V_{\C,0}$, this map agrees with $F$ on a lift of $B$.

We therefore let $W\subset\Omega_\cV$ be $W=g^{-1}(T)$, and observe that $R$ lifts to the intersection $W\cap\Sigma_\cV$.  To apply Theorem \ref{main} we must:

\begin{enumerate}
    \item[(a)] compute the algebraic monodromy group of $\cV$;
    \item[(b)] compute the dimension of $W$.
\end{enumerate}

\begin{prop}\label{maxmonodromy}
The algebraic monodromy group $\G$ of $\cV$ is $(\G_a^2\rtimes\SL_2)^n$.
\end{prop}

\begin{proof}

That $\G$ surjects onto $\SL_2^n$ follows from the fact that the elliptic curves are non-isogenous and non-isotrivial, together with the classification of weakly-special subvarieties of $X(1)^n$ (see \cite[Proposition 2.1]{Edixhoven}). Since $\SL_2$ acts irreducibly on its standard representation, we claim that it is sufficient to show that none of the algebraic monodromy groups $\G_i$ of any of the factors $\cV_i$ is $\SL_2$. Indeed, if this is the case than the unipotent radical is a sum of $(\G_a^2)^n$ which surjects to each factor and is invariant under $\SL_2^n$. Since the irreducible constituents are simply the fibers and they are mutually non-isomorphic, the claim follows.

To see that none of the $\G_i$ is $\SL_2$, we first note by Theorem \ref{MTfacts} that the algebraic monodromy group is normal in the derived subgroup of the generic Mumford-Tate group, and thus its sufficient to show that the generic Mumford-Tate group of each $\cV_i$ is maximal.
\begin{lemma}
Let $E$ be a mixed Hodge structure of the form \eqref{ext} and suppose $\gr^W_1 E$ is Mumford--Tate general.  Then the Mumford--Tate group of $E$ is $\GL_2$ if and only if the extension of mixed Hodge structures
\begin{equation}\label{ext2}0\to \Z(0)\to E\to \gr^W_1E\to0\end{equation}
is $\Q$-split.
\end{lemma}
\begin{proof}
Recall (see \S\ref{MTback}) that the Mumford--Tate group of $E$ is the stabilizer of all Hodge classes in all tensors $E^{\otimes m}\otimes (E^\vee)^{\otimes n}$.  If the Mumford--Tate group of $E$ is $\GL_2$ then there is a fixed vector in $E_\Q$ which therefore splits \eqref{ext2}, and the converse is obvious.
\end{proof}
Now it remains to note that the space of extensions \eqref{ext2} up to integral isomorphism is $(F^1\gr_1^WE)^\vee/(\gr_1^WE)_\Z^\vee\cong (\gr_1^WE)_\C/F^0\gr_1^WE+(\gr_1^WE)_\Z$ which is just the elliptic curve corresponding to $\gr_1^WE$, and the $\Q$-split points are the torsion points.
\end{proof}

We now compute the dimension of $W$.  Note that $g:\Omega_\cV\to V_{\C,0}$ is equivariant with respect to the action of $\G(\C)$.  Moreover, the class $0\neq [\omega_i]\in F^1V_i$ is not contained in $W_0V_i$ for any $i$ at any point.  Thus, the orbit of any point in the image of $g$ is an open subset of $V_{\C,0}$, and in particular of dimension $3n$.  Thus, the fibers of $g$ all have the same dimension $3n$, and $\codim_{\Omega_\cV} W=k$.  We then have $\codim_W U<\dim\G$, and it follows from Theorem \ref{main} that $R$ is contained in a proper weak Mumford--Tate subvariety.  In particular, it is not Zariksi dense, and $R$ must be contained in either:
\begin{enumerate}
    \item the locus where some $E_i$ becomes isotrivial, corresponding to the algebraic monodromy group of the restriction of $\cV_i$ being contained in $\G_a^2$;
    \item the locus where some $E_i,E_j$ for $i\neq j$ become isogenous, corresponding to the algebraic monodromy group of the restriction of $\cV_i\oplus\cV_j$ being contained in the preimage of (a conjugate of) the diagonal under $(\G_a^2\rtimes\SL_2)^2\to\SL_2^2$;
    \item the locus where some section $f_i$ of some $E_i$ is torsion, corresponding to the algebraic monodromy group of the restriction of $\cV_i$ being contained in (a lift of) $\SL_2$.
\end{enumerate}
To complete the proof we just need to show that if we are only in the last case then at least two sections become torsion.  Assume that only one section (without loss of generality $f_1$) becomes torsion.  Consider the variation $\cV':=\gr^W_1\cV_1\oplus\bigoplus_{i>1}\cV_i$, which is a quotient of $\cV$.  Let $V_{\C,0}\to V'_{\C,0}$ be the corresponding quotient and $T'$ the image of $T$.  Now $\codim T'\geq k-1$ and $\codim_{R^\Zar} R<k-1$, so applying the same analysis as above we obtain a contradiction.

\end{proof}

\subsection{Ax--Lindemann for abelian differentials} In this section we prove a recent conjecture of Klingler--Lerer \cite{klinglerlerer}.  We first briefly recall strata of abelian differentials.

Let $g> 0$ be an integer, $\alpha$ a partition of $2g-2$, and $S=S_\alpha$ the moduli space of pairs $(C,\omega)$ where $C$ is a genus $g$ curve and $\omega$ is a regular 1-form on $C$ whose zero divisor $Z(\omega)$ has type $\alpha$, meaning it is of the form $\sum \alpha_ip_i$ for distinct points $p_i$ on $C$.  There is a natural variation of mixed Hodge structures over $S_\alpha$ whose fiber over $(C,\omega)$ is the relative cohomology group $H^1(C^\an,Z(\omega),\Z)$.  

Fixing a basepoint $(C_0,\omega_0)$, let $\pi:\widetilde{S^\an}\to S$ be the universal cover.  The map $\phi:\widetilde{S^\an}\to V_0:=H^1(C_0,\omega_0,\C)$ mapping $(C,\omega)$ to the image of the class $[\omega]\in H^1(C,\omega,\C)$ under the flat trivialization is a local isomorphism by a theorem of Veech \cite[Thm. 7.15]{veech}.

Following \cite{klinglerlerer}, we say an (irreducible) algebraic subvariety $W\subset S$ is bialgebraic if the Zariski closure of $\phi(W_0)$ in $V_0$ has dimension $\dim W$ for some (hence any) component $W_0$ of $\pi^{-1}(W)$.  We likewise say $W\subset V_0$ is bialgebraic if the Zariski closure of $\pi(W_0)$ has dimension $\dim W$ for some (hence any) component $W_0$ of $\phi^{-1}(W)$.  The following is the Ax--Lindemann conjecture of \cite{klinglerlerer}.

\begin{thm}
For any algebraic subvariety $W\subset V_0$, the Zariski closure of $\pi(W_0)$ is bialgebraic for any component $W_0$ of $\phi^{-1}(W)$.
\end{thm}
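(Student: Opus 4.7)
The plan is to deduce this theorem from Theorem \ref{main} applied to the admissible variation $\cV$ on $S=S_\alpha$ with fiber $H^1(C,Z(\omega),\Z)$ at $(C,\omega)$. The algebraic section $[\omega]$ of the relative de Rham bundle induces an algebraic, $\G_\full$-equivariant evaluation map
\[g\colon \Omega_\cV\to V_0,\qquad f\mapsto f([\omega]),\]
and unwinding the definition of the flat trivialization appearing in Veech's theorem gives $g\circ\sigma_\cV=\phi$. Thus Veech's map is the restriction to the flat leaf $\Sigma_\cV$ of an algebraic map on the period torsor, and components of $\phi^{-1}(W)$ correspond bijectively to components of $g^{-1}(W)\cap\Sigma_\cV$ for any algebraic $W\subset V_0$.

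Given irreducible algebraic $W\subset V_0$ with a component $W_0$ of $\phi^{-1}(W)$, set $U:=\sigma_\cV(W_0)$, $S_0:=\pi(W_0)^\Zar$, and let $\G_0$ be the algebraic monodromy of the restriction $\cV_{S_0}$. Since $\phi$ is a local isomorphism, $\dim U=\dim W_0=\dim W$ and $U$ is a component of $g^{-1}(W)\cap\Sigma_\cV\cap\Omega_{\cV_{S_0}}$. By construction the projection of $U$ to $S$ equals $\pi(W_0)$, which is Zariski dense in $S_0$ and hence contained in no proper weak Mumford--Tate subvariety of $S_0$. Applying Theorem \ref{main} to $\cV_{S_0}$, with algebraic subvariety $W^*:=g^{-1}(W)\cap\Omega_{\cV_{S_0}}$ and the component $W^*_0\supset U$, then yields
\[\dim W^*_0\geq\dim U+\dim\G_0=\dim W+\dim\G_0.\]

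For the matching upper bound, let $\widetilde{S_0}$ be the component of $\pi^{-1}(S_0^\an)$ containing $W_0$ and set $T_0':=\phi(\widetilde{S_0})^\Zar\subset V_0$; this is $\G_0$-invariant by monodromy equivariance, so $g\colon\Omega_{\cV_{S_0}}\to T_0'$ is dominant with generic fiber dimension $\dim S_0+\dim\G_0-\dim T_0'$. Since $\phi(W_0)$ is Zariski dense in $W$ and contained in $T_0'$, we have $W\subset T_0'$. A fiber-dimension calculation---using $\G_0$-equivariance of $g$ to handle the case where $W$ avoids the generic-fiber locus in $T_0'$---yields
\[\dim W^*_0\leq\dim W+\dim S_0+\dim\G_0-\dim T_0'.\]
Combining the two bounds gives $\dim T_0'\leq\dim S_0$; together with the reverse inequality $\dim T_0'\geq\dim S_0$ (from $\phi$ being a local isomorphism), we obtain $\dim T_0'=\dim S_0$, which is exactly bialgebraicity of $S_0$. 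The principal technical hurdle is justifying this upper bound on $\dim W^*_0$, which requires a careful fiber-dimension analysis exploiting the $\G_0$-equivariance of $g$.
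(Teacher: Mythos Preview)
Your outline is essentially the paper's proof: restrict to $S_0=\pi(W_0)^\Zar$, factor $\phi$ through the algebraic evaluation map $g$ on the period torsor, apply Theorem \ref{main} for a lower bound on the dimension of the relevant component of $g^{-1}(W)$, and match it against an upper bound coming from a fiber-dimension count over $T_0':=\phi(\widetilde{S_0})^\Zar$. The conclusion $\dim T_0'\leq\dim S_0$ is exactly what the paper proves (with $Y$ in place of your $T_0'$).

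The point you flag as the ``principal technical hurdle'' is indeed the only nontrivial step, and the paper resolves it by a slightly sharper statement than bare equivariance: it shows that $\overline{\G_0(\C)W}=T_0'$. The nonobvious containment $T_0'\subset\overline{\G_0(\C)W}$ is obtained via definable Chow (Theorem \ref{definechow}): since $\overline{\G_0(\C)W}$ is $\G_0(\C)$-invariant, $\phi^{-1}(\overline{\G_0(\C)W})$ descends to a closed definable analytic, hence algebraic, subvariety of $S_0$ containing $\pi(W_0)$, and is therefore all of $S_0$; thus $\phi(\widetilde{S_0})\subset\overline{\G_0(\C)W}$. Once $\G_0(\C)W$ is known to be Zariski dense in $T_0'$, equivariance of $g$ gives that the generic fiber dimension of $g$ over $W$ equals that over $T_0'$, and your upper bound follows on the nose. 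Equivariance alone, without this density statement, only says the fiber dimension is constant on $\G_0$-orbits and does not rule out $W$ sitting entirely inside a proper $\G_0$-invariant jump locus; the definable Chow step is what closes this gap.
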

\begin{proof}
Let $X$ be the Zariski closure of $\pi(W_0)$ and let $\V$ be the above variation of mixed Hodge structures restricted to $X$.  As in the previous section, there is a natural algebraic evaluation map $r:\Omega_\V\to V_0$ through which $\phi|_X=r\circ\sigma_\V$ factors. 

We claim that $r(\Omega_\V)$ and $\G(\C)W$ have the same Zariski closure.  Indeed, $Y:=\overline{\G(\C)W}$ is certainly contained in $\overline{r(\Omega_\V)}$.  On the other hand, $Y $ is $\G(\C)$-invariant so the pullback to $\widetilde{X^\an}$ descends to an algebraic subvariety of $X$ by definable Chow \ref{definechow} and contains $\pi(W_0)$, hence it must be all of $X$.  Thus, $\phi(\widetilde{X^\an})$ is contained in $Y$, as therefore is its Zariski closure $\overline{r(\Omega_\V)}$, so we have the inclusion in the reverse directions.

As $\phi(\widetilde{X^\an})\subset Y$, for $X$ to be bialgebraic it suffices to show $\dim X\geq\dim Y$.  Let $W'$ be the pullback of $W$ to $\Omega_\V$.  The dimension of $W'$ is $\dim F+\dim W$ where $F$ is the generic fiber of $r:\Omega_\V\to V_0$ over $W$.  By the above $r(\Omega_\V)$ and $\G(\C)W$ have the same Zariski closure, and since $r$ is $\G(\C)$-equivariant it follows that the generic fiber dimension of $r$ over its image is equal to the generic fiber dimension over $W$.  Thus, 
\begin{align*}
    \dim W'&= \dim \Omega_\V-\dim Y+\dim W\\
    &=\dim \G+\dim X-\dim Y+\dim W.
\end{align*}
Now $\pi(W_0)$ is Zariski dense in $X$ but also lifts to the intersection of $W'$ with a leaf, so by Theorem \ref{main} we must have
\[\G\leq \dim W'-\dim W=\dim\G+\dim X-\dim Y\]
and therefore 
$\dim X\geq \dim Y$ as desired.
\end{proof}
\subsection{Twisting by the period torsor}\label{secotherAS}In this section we explain formally how one may deduce many of the previous Ax-Schanuel theorems from Theorem \ref{main}.

In applications, one often has a variety $M$ with an algebraic (left) $\G(\C)$-action and an equivariant algebraic map $g:\Omega_\cV\to M$, as in the last subsection.  In this case Theorem \ref{main} is readily applied.

Another common situation is to have an algebraic variety $\mathbb{P}\to S$ over $S$, an algebraic variety $P$ equipped with a (left) $\G(\C)$-action, and a $\G(\C)$-equivariant map
\begin{equation}\langle\;,\;\rangle:\Omega_\cV\times_S\mathbb{P}\to P \label{twisted}\notag \end{equation}
which we call a \emph{twisting map}.  Such a map yields a map $\langle \sigma_\cV,\;\rangle:\widetilde {S^\an}\times_{S^\an}\mathbb{P}^\an\to P^\an$ on the base-change to the universal cover.  Note that these two setups are equivalent, as we may take $\langle\;,\;\rangle$ to be the $\G(\C)$-equivariant map
\[g_{\langle\;,\;\rangle}:\Omega_\cV\to \Hom_S(\mathbb{P},P_S)\]
where $P_S=P\times S$, and in the other direction we may take $\mathbb{P}=S$ and ${\langle\;,\;\rangle}_g=g$.
\medskip

Examples of twisting maps include:
\begin{itemize}
    \item $\mathbb{P}=\mathbb{V}$ and $P=V_{\C,0}$ and $\langle\;,\;\rangle$ the obvious evaluation.  The map $\langle \sigma_\cV,\;\rangle$ is then the flat trivialization.  
    \item $\mathbb{P}=$ the relative flag variety of $V_\O$ for which the Hodge filtration yields a section, $P=$ the flag variety of $V_{\C,0}$ containing the relevant period domain (that is, its dual), and $\langle f, F^\bullet V_s \rangle=f(F^\bullet V_s)$.  The map $\langle \sigma_\cV,\;\rangle$ is then the period map.
    \item For any artinian ring $A$ and any twisting map $\langle\;,\;\rangle:\Omega_\cV\times_S\mathbb{P}\to P$ we get a map on $A$-jet spaces 
    \[J_A\Omega_\cV\times_{J_AS}J_A\mathbb{P}\to J_AP.\]
    The horizontal jets yield a natural subspace $\Omega_\cV\times_SJ_AS\subset J_A\Omega_\cV $ which is preserved by the $\G(\C)$ action.  We therefore obtain a twisting map
    $\langle\;,\;\rangle_A:\Omega_\cV\times_SJ_A\mathbb{P}\to J_AP$ and the map $\langle\sigma_\cV,\;\rangle_A$ is then the map on jet spaces induced by $\langle \sigma_\cV,\;\rangle$.  In this way we may access transcendence statements for the derivatives of $\langle \sigma_\cV,\;\rangle$, as in \cite{MPT}.
    \item By taking $\mathbb{P}$ to be $S\times X$ and $P=\Omega_{\cV}\times X$ for a variety $X$, we obtain the Ax-Schanuel result ``in families", or ``relative Ax-Schanuel" as it has been called in the literature.

\end{itemize}
Given a twisting map we define
\[\mu:=\langle \;,\;\rangle\times \pi_2 :\Omega_\cV\times_S\mathbb{P}\to P\times \mathbb{P}.\]
We say the twisting map is \emph{balanced} if the fibers of $\mu$ all have the same dimension.  Note that the fiber over $(p',p)$ is identified with the stabilizer $\Stab_{\G(\C)}(p)$.  In practice, given a twisting map we can always assume it is balanced by passing to a Zariski open subset.
\begin{prop} Let $\langle\;,\;\rangle$ be a balanced twisting map as above and let $\Delta\subset\img\mu$ be the image of $\Sigma_\cV\times_{S^\an}\mathbb{P}^\an$ under $\mu^\an$.  Let $W\subset \img\mu$ be an algebraic variety and $U$ a component of $W^\an\cap\Delta$ such that
\[\codim_\Delta U<\codim_{\img \mu} W.\]
Then the projection of $U$ to $S^\an$ is contained in a weak Mumford--Tate subvariety.
\end{prop}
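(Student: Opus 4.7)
The plan is to pull the situation back to the variation $\cV_\mathbb{P}$ on $\mathbb{P}$ and reduce to Theorem \ref{main}. As a preliminary reduction, I would replace $S$ by the Zariski closure of $\pi_\mathbb{P}(\mathbb{P})$ to assume $\pi_\mathbb{P}$ is dominant; if the algebraic monodromy of $\cV$ on this closure is already strictly smaller than $\G$, the conclusion holds by Corollary \ref{corMT}, so we may assume it is still $\G$. A Stein factorization of $\pi_\mathbb{P}$ then shows the image of $\pi_1(\mathbb{P})\to\pi_1(S)$ has finite index, whence the algebraic monodromy $\G_\mathbb{P}$ of $\cV_\mathbb{P}$ equals $\G$.

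Next I would identify $\Omega_{\cV_\mathbb{P}}\cong \Omega_\cV\times_S\mathbb{P}$ and $\Sigma_{\cV_\mathbb{P}}\cong \Sigma_\cV\times_{S^\an}\mathbb{P}^\an$, and set $W':=\mu^{-1}(W)$, a closed algebraic subvariety of $\Omega_{\cV_\mathbb{P}}$. The balanced assumption---all fibers of $\mu$ have a common dimension $d$---gives $\codim_{\Omega_{\cV_\mathbb{P}}} W' = \codim_{\img\mu} W$. Because $\Sigma_\cV$ meets each fiber of $\Omega_\cV\to S$ in a countable $\Gamma$-orbit, the restriction of $\mu$ to $\Sigma_{\cV_\mathbb{P}}$ has discrete fibers onto $\Delta$, so $\dim\Delta=\dim\mathbb{P}$ and every component of $\mu^{-1}(U)\cap\Sigma_{\cV_\mathbb{P}}$ has dimension $\dim U$. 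Choosing a component $U'$ of $W'\cap\Sigma_{\cV_\mathbb{P}}$ containing such a lift, a short dimension check translates the hypothesis $\codim_\Delta U<\codim_{\img\mu} W$ into the Ax--Schanuel inequality $\codim_{W'}U'<\dim\G=\dim\G_\mathbb{P}$.

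I would then apply Theorem \ref{main} to the triple $(\cV_\mathbb{P}, W', U')$, producing a weak Mumford--Tate subvariety $Z\subset\mathbb{P}$ containing the projection of $U'$, which surjects onto the projection of $U$ to $\mathbb{P}$ and hence to $S$. A second Stein factorization, this time applied to $Z\to\pi_\mathbb{P}(Z)^\Zar$, identifies the algebraic monodromy of $\cV$ on $\pi_\mathbb{P}(Z)^\Zar$ with that of $\cV_\mathbb{P}|_Z$, which is strictly smaller than $\G_\mathbb{P}=\G$; Corollary \ref{corMT} then produces the desired weak Mumford--Tate subvariety of $S$ containing the projection of $U$. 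The main obstacle is managing the algebraic monodromy at each transition between $\mathbb{P}$ and $S$---both pullback and restriction can shrink it in general---and the two Stein factorization arguments above are precisely what allow the argument to close without extra hypotheses on $\mathbb{P}\to S$; once this bookkeeping is in place, the dimension computations and the identification of the pulled-back period torsor are routine.
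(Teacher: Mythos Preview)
Your argument is correct but follows a genuinely different route from the paper.  You pull the variation back to $\mathbb{P}$, apply Theorem \ref{main} on $\mathbb{P}$, and then push the weak Mumford--Tate conclusion down to $S$ using the fact that a dominant morphism of complex varieties induces a finite-index image on fundamental groups (what you call the Stein factorization step), applied once to $\mathbb{P}\to S$ and once to $Z\to\pi_\mathbb{P}(Z)^\Zar$.  The paper instead stays over $S$ throughout: it lifts $U$ to $U'\subset\Sigma_\cV\times_{S^\an}\mathbb{P}^\an$, takes $W'$ a component of $\mu^{-1}(W)$ containing $U'$, and then \emph{projects} both under the first projection $\Omega_\cV\times_S\mathbb{P}\to\Omega_\cV$ to obtain $W''\subset\Omega_\cV$ and $U''\subset\Sigma_\cV$.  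After possibly shrinking $W$ so that the generic fiber dimensions of $W'\to W''$ and of $U'\to U''$ coincide, a short computation gives $\codim_{W''}U''<\dim\G$, and Theorem \ref{main} applied directly to $(\cV,W'',U'')$ on $S$ finishes.  The paper's route is shorter and needs no monodromy comparison between $\mathbb{P}$ and $S$; your route is arguably more conceptual in that it recognizes $\Omega_\cV\times_S\mathbb{P}$ as (a union of components of) the period torsor of $\cV_\mathbb{P}$, at the cost of the two $\pi_1$ arguments.
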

\begin{proof}Let $k$ be the dimension of the fibers of $\mu$.  First, $U$ naturally lifts to $\Sigma_\cV\times_{S^\an}\mathbb{P}^\an\subset \Omega_\cV^\an\times_{S^\an} \mathbb{P}^\an$.  Call this lift $U'$.  Let $W'$ be the component of the preimage of $W$ under $\mu$ which contains $U'$, and let $W''\subset \Omega_\cV$ (resp. $U''\subset \Sigma_\cV$) be the image of $W'$ (resp. $U'$) under the first projection.  Clearly $W''\cap \Sigma_\cV$ contains $U''$ and we also have that $\dim W'=\dim W+k$.  The fibers of $W'\to W''$ are the intersections of $W$ with subvarieties of the form $p'\times\mathbb{P}_t$, and these are the same fibers as $U'\to U''$ over $U''$.  Up to replacing $W$ with an algebraic subvariety for which the generic fiber of $W'\to W''$ has the same size as the generic fiber of $U'\to U''$ (and without changing $U$), we therefore have
\begin{align*}
    \codim_{W''} U''&=\codim_{W'} U'
    \\
    &=\codim_W U+k\\
    &=\codim_{\Delta}U-\codim_{\img\mu}W+(k+\dim\img\mu-\dim\Delta)\\
    &<\dim\G.
\end{align*}
Applying Theorem \ref{main}, the result follows.
\end{proof}
Note that in the context of the proposition, $\img\mu$ is the Zariski closure of $\Delta$ in $P\times\mathbb{P}$.

\bibliography{NoriRewrite}
\bibliographystyle{plain}

\end{document}